\newtheorem{theorem}{Theorem}[section]
\newtheorem{lemma}[theorem]{Lemma}
\newtheorem{corollary}[theorem]{Corollary}
\theoremstyle{definition}
\theoremstyle{remark}
\newtheorem{remark}[theorem]{Remark}
\numberwithin{equation}{section}
\newenvironment{subproof}[1][\proofname]{
  
  \begin{proof}[#1]
}{
  \end{proof}
}
\newcommand{\LFE}{\mathscr{L}(\mathscr{F}(E))}
\newcommand{\F}[1]{\mathscr{F}(#1)}
\newcommand{\tens}[2]{#1^{\otimes #2}}
\newcommand{\phiinf}[1]{\varphi_{\infty}(#1)}
\newcommand{\phitens}[1]{\varphi_{\infty}(#1) \otimes I_{H}}
\newcommand{\wtens}[1]{W_{#1} \otimes I_{H}}
\newcommand{\wstartens}[1]{W_{#1}^* \otimes I_{H}}
\newcommand{\mf}[1]{\mathfrak{#1}}
\newcommand{\ms}[1]{\mathscr{#1}}
\newcommand{\mb}[1]{\mathbb{#1}}
\newcommand{\mc}[1]{\mathcal{#1}}
\newcommand{\mcZ}[1]{(Z^{(#1)})^{-1}}
\newcommand{\inc}[2]{(V\!\genfrac{}{}{0pt}{1}{#2}{#1})}
\newcommand{\proj}[2]{(P\!\genfrac{}{}{0pt}{1}{#2}{#1})}
\begin{document}
\title{Interpolation and Commutant Lifting with Weights}
\author{Jennifer R Good}
\address{Department of Mathematics, University of Wisconsin - Platteville, Platteville, WI 53818}
\email{goodje@uwplatt.edu}
\keywords{Nevanlinna-Pick interpolation, Commutant Lifting, Double Commutant,  $W^*$-cor\-respondence, noncommutative Hardy algebra,  Sequence of Weights}

\begin{abstract}

Our two principle goals are generalizations of the commutant lifting theorem and the Nevanlinna-Pick interpolation theorem to the context of Hardy algebras built from $W^*$-cor\-res\-pon\-den\-ces endowed with a sequence of weights. These theorems generalize theorems of Muhly and Solel from 1998 and 2004, respectively, which were proved in settings without weights.  Of special interest is the fact that commutant lifting in our setting is a consequence of Parrott's Lemma; it is inspired by work of Arias.

\end{abstract}

\maketitle

\section{Introduction} \label{IntroChap}

This paper concerns weighted interpolation problems in the theory of operator algebras built from  $W^*$-correspondences.
The classic origins of this study are summarized in the phrase \emph{weighted interpolation}.
The original interpolation problem, solved separately by Nevanlinna and Pick in the early twentieth century, sought conditions under which, given $k \in \mb{N}$ and two collections, $\{\omega_i\}_{i = 1}^k \subseteq \mb{D}$ and $\{\lambda_i\}_{i = 1}^k \subseteq \mb{C}$, there exists a function $\phi \in H^\infty(\mb{D})$ of norm at most $1$ that interpolates this data; that is, 
\begin{equation}
\label{e1}
\phi(\omega_i) = \lambda_i, \quad 1 \leq i \leq k.
\end{equation}
Nevanlinna and Pick showed that such a $\phi$ exists if and only if
\begin{equation}
\label{e2}
\left[ \ms{K}(\omega_i, \omega_j) (1-\lambda_i \overline{\lambda_j}) \right]_{i,j=1}^k
\end{equation}
is a positive semidefinite $k \times k$ matrix where $\ms{K}$ is the Szeg\"{o} kernel, $\ms{K}(w, z) = \frac{1}{1 - w \bar{z}}$ for $w,z \in \mb{D}$, reproducing kernel to the Hardy space $H^2(\mb{D})$   \cite{Nevanlinna1919} \cite{Pick1916}.
More generally, a \emph{weighted} Hardy space, as described in  \cite{Shields_1974}, is a reproducing kernel Hilbert space with kernel $\ms{K}: \mb{D}_r \times \mb{D}_r \to \mb{C}$, defined by $\ms{K} (w,z) = \sum_{i = 0}^\infty \frac{\left(w \overline{z} \right)^i}{\beta_i^2}$ 
for a sequence of strictly positive numbers $\beta = \left\{\beta_i \right\}_{i = 0}^\infty$.
Our primary goal is a weighted Nevanlinna-Pick type interpolation theorem that applies to the so-called weighted Hardy algebra, a non-self-adjoint operator algebra recently studied in \cite{Muhly2016}.

While we will expand on the details in a later section, in brief the setting of \cite{Muhly2016}, inspired by Popescu's work in \cite{Popescu2010}, begins with a $W^*$-algebra  $M$, a $W^*$-correspondence over $M$ denoted $E$, and a sequence of operator-valued weights $Z$ that has been constructed from a so-called admissible sequence $X$.
Our algebra of focus is the weighted Hardy algebra $H^\infty(E,Z)$, the algebra of adjointable operators on the Fock space $\ms{F}(E)$ that is generated by the  weighted creation operators associated with $Z$ and the left action maps of $M$ on  $\ms{F}(E)$.
To gain insight into $H^\infty(E,Z)$ we may view it as an algebra of \emph{functions} on its space of representations, as follows.  
Having fixed $\sigma$, a representation of $M$ on a Hilbert space $H$, the associated representations of $H^\infty(E,Z)$ are in a  sense determined by $\mb{D}(X, \sigma)$, a certain collection  of operators in $\ms{B}(E \otimes_\sigma H, H)$.
Every point $\mf{z} \in \mb{D}(X, \sigma)$ gives rise to a  representation $(\sigma \times \mf{z})$ of $H^\infty(E,Z)$ on $H$, and we may consider an operator $Y \in H^\infty(E, Z)$ as a $\ms{B}(H)$-valued function $\widehat{Y}$ on $\mb{D}(X, \sigma)$ defined at $\mf{z} \in \mb{D}(X, \sigma)$ by  $\widehat{Y}(\mf{z}) = (\sigma \times \mf{z})(Y)$.

In this setting, let us preview the main theorem.  
We form a weighted $W^*$-version of the original Szeg\"{o} kernel,  a map $\ms{K}$ from $\mb{D}(X, \sigma) \times \mb{D}(X, \sigma)$ to a collection of completely bounded maps on $\sigma(M)'$.
For any choice of $k \in \mb{N}$, $\{\mf{z}_i \}_{i=1}^k \subseteq \mb{D}(X, \sigma)$,  and two collections of operators $\{ B_i \}_{i=1}^k$ and $\{ F_i \}_{i=1}^k $ in $\ms{B}\left( H \right)$, there is an associated map $\ms{A}$ from the $k \times k$ matrices with entries in $\sigma(M)'$ to the $k \times k$ matrices with entries in $\ms{B}\left( H \right)$ that is defined at $[A_{ij}]_{i,j = 1}^k$ to be
\begin{equation} 
\label{e3}
\left[B_i \cdot \ms{K}(\mf{z}_i,\mf{z}_j)(A_{ij})  \cdot B_j^* - F_i \cdot \ms{K}(\mf{z}_i, \mf{z}_j)(A_{ij})  \cdot F_j^*\right]_{i,j = 1}^k.
\end{equation}  
The conclusion of our weighted Nevanlinna-Pick theorem is that $\ms{A}$ is completely positive if and only if there exists $Y \in H^\infty(E,Z)$ such that $\Vert Y \Vert \leq 1$ and 
\begin{equation}
\label{e4}
B_i \left( \widehat{Y}(\mf{z}_i) \right) = F_i, \quad 1 \leq i \leq k.
\end{equation}
A comparison of equations \eqref{e1} and \eqref{e4}, as well as matrices  \eqref{e2} and \eqref{e3}, reveals the similarities with the original interpolation problem.  
Indeed, when $M = E = \mb{C}$, $\sigma$ is the one-dimensional representation of $M$, $Z$ is the constant sequence at $1$, and $B_i = 1$ for every $i$,  matrix  \eqref{e3} reduces to matrix \eqref{e2} with minor technical adjustments, and we obtain the original result of Nevanlinna and Pick.  
Even in the scalar case, our theorem is more general than that of Nevanlinna and Pick, for it applies to certain weighted Hardy spaces, precisely those that have the so-called \emph{complete Pick property} described in \cite{Agler2002}.  
For example, our results apply to the Hardy and Dirichlet spaces, but not the Bergman space.

Commutant lifting, pioneered by Sarason \cite{Sarason1967} and Sz.-Nagy and Foia\c{s} \cite{Sz.-Nagy1968}, serves as the principle tool for the proof of Muhly and Solel's \emph{unweighted} Nevanlinna-Pick interpolation result, Theorem 5.3 in \cite{Muhly2004a}.  
Wishing to follow suit, we must first establish that commutant lifting can be done in the weighted case.  
That is, an operator on a co-invariant subspace for an induced representation of $H^\infty(E,Z)$ that commutes with the compression of the image of the  representation may be lifted to a commuting operator on the \emph{full} induced space without increasing the norm. 
As it turns out, our proof of the weighted commutant lifting theorem is where we differ most substantially from the unweighted case.
Muhly and Solel's commutant lifting result, Theorem 4.4 of \cite{Muhly1998a}, is proven using so-called isometric dilations, but in our setting, the weights create obstacles that make this method difficult or even impossible.   
Instead, we adapt the proof for a  commutant lifting theorem given by Arias in \cite{Arias2004}, an argument that ultimately makes use of Parrott's lemma \cite{Parrott1978}.  
Along with the results themselves, this new $W^*$-approach to commutant lifting utilizing technology that works in our more general setting is one of the paper's primary attractions.
The $W^*$-setting of our results provides for a generality that encompasses, for instance,  the unweighted commutant lifting and Nevanlinna-Pick theorems of Muhly and Solel in  \cite{Muhly1998a} and \cite{Muhly2004a} and the weighted lifting and interpolation theorems of Popescu in \cite{Popescu2010}.

The paper is organized as follows.
In Section \ref{prelim} we establish definitions and notation; for convenience, we have organized this material into three subsections.
Section \ref{onbChap} is a technical section devoted to the construction of a family of orthonormal bases in preparation for the weighted commutant lifting theorem, Theorem \ref{WCL_theorem}, the principal result of Section \ref{wclChap}.
In Section \ref{dblcomm} we give a weighted double commutant theorem,  Theorem \ref{double_comm_theorem}, that extends its  unweighted analogue in \cite{Muhly2004a}.
Section \ref{wnpChap} contains our main result, the weighted Nevanlinna-Pick interpolation theorem, Theorem \ref{WeightedNevanlinnaPick}.

\section{Preliminaries} \label{prelim}

\subsection{$W^*$-correspondences and the Unweighted Hardy Algebra}

We let $\mb{N}_0 = \mb{N} \cup \{0\}$. 
Hilbert spaces have inner products that are linear in the \emph{second} variable and conjugate linear in the first.  
Throughout the paper, $M$ will denote a $W^*$-algebra, that is a $C^*$-algebra that is also a dual space, thought of abstractly, without reference to a particular representation on Hilbert space.  
Likewise, $E$ will denote a $W^*$-correspondence over $M$ in the sense of Section 2 in \cite{Muhly2004a}.  
That is, $E$ is a self-dual Hilbert $C^*$-module over $M$ in the sense of \cite{Lan95} and \cite{Paschke1973} with a second-linear inner product that is also a left $M$-module with respect to a faithful normal $*$-homomorphism $\varphi:M \to \ms{L}(E)$ where $\ms{L}(E)$ denotes the $W^*$-algebra of adjointable operators on $E$.  
For simplicity, we assume that $\varphi$ is unital. 
At certain points, we will add the assumption that the right action of $E$ is \emph{full} in the sense that the ultraweakly closed linear span of $\{\langle \xi, \eta \rangle \mid \xi, \eta \in E \}$ is all of $M$.  
By Proposition 3.8 of \cite{Paschke1973}, $E$ is a dual space, and we refer to the weak-$*$ topology on $E$ as its \emph{ultraweak} topology.
If $E$ is nonzero, then $E$ has an orthonormal basis, that is, a family $A \subseteq E$ that is maximal with respect to the following two properties: for every $\alpha \in A$,    $ \langle {\alpha}, {\alpha} \rangle$ is a nonzero projection in $N$, and if $ \alpha, \beta \in A$ and $\alpha \neq \beta$, then $  \langle {\alpha}, {\beta} \rangle = 0$ (\cite{Paschke1973}, proof of Theorem 3.12).

If $N$ is a $W^*$-algebra and $F$ is an $(M,N)$ $W^*$-correspondence with left action map $\sigma: M \to \ms{L}(F)$, then there is an $(M,N)$ $W^*$-correspon\-dence,  denoted $E \otimes_\sigma F$, formed by taking the self-dual completion of a quotient of the algebraic tensor product of $E$ and $F$, balanced over $N$; the quotient is determined by the semi inner product satisfying  $\langle \xi_1 \otimes \eta_1, \xi_2 \otimes \eta_2 \rangle = \langle \eta_1, \sigma \langle \xi_1, \xi_2 \rangle (\eta_2) \rangle$ for $\xi_1, \xi_2 \in E$ and $\eta_1, \eta_2 \in F$.
To give the left action of $M$ on $E \otimes_\sigma F$, first form the \emph{induced representation of $\ms{L}(E)$}, $\sigma^E: \ms{L}(E) \to \ms{L}(E \otimes_\sigma F)$,  defined at $S \in \ms{L}(E)$ by $\sigma^E (S) = S \otimes I_F$, as in \cite{R1974b}.  
Then the left action of $M$ on $E \otimes_\sigma F$ is $\sigma^E \circ \varphi$, called the \emph{induced representation of $M$}, mapping $a \in M$ to $\varphi(a) \otimes I_F$ in $\ms{L}(E \otimes_\sigma F)$.  
We observe that an $(M, \mb{C})$ $W^*$-correspondence is simply a Hilbert space $H$ together with a normal unital $*$-homomorphism $\sigma: M \to \ms{B}(H)$; thus, taking $F$ to be $H$  we obtain the Hilbert space $E \otimes_\sigma H$, called the \emph{induced representation space}, and the representation $\sigma^E \circ \varphi: M \to \ms{B}(E \otimes_\sigma H)$.
If, instead, we inductively take $F$ to be $E$ in the tensor product construction, we form the \emph{tensor powers} of $E$,  $\{\tens{E}{k}\}_{k = 0}^{\infty}$, a family of $W^*$-correspondences over $M$.  We write  $\varphi_k$ for the left action of $M$ on $\tens{E}{k}$ for each $k \in \mb{N}_0$.  
To be precise,  $\tens{E}{0}: = M$,  $\tens{E}{1}: = E$, and  $\tens{E}{k}: = E \otimes_{\varphi_{k-1}} \tens{E}{k-1}$ for $k \geq 2$ where  $\varphi_0$ is left multiplication and $\varphi_1 = \varphi$.

The  \emph{Fock space of $E$}, $\ms{F}(E)$, is the ultraweak direct sum of the tensor powers of $E$; that is $\ms{F}(E) := \sum_{k = 0}^\infty \oplus \tens{E}{k}$.   
The Fock space is a $W^*$-correspondence over $M$ with respect to the left action map $\varphi_\infty: M \to \LFE$, defined at $a \in M$ by $ \varphi_\infty(a) = diag [ \varphi_0(a), \varphi_1(a), \varphi_2(a), \ldots ]$.
Along with the class of left action maps, a second important class of operators in $\ms{L}(\ms{F}(E))$ is the class of \emph{creation operators}.
The creation operator $T_\xi$ determined by $\xi \in E$ is defined at $\eta \in \F{E}$ by $T_\xi \left(  \eta \right) = \xi \otimes \eta$;  
matricially, $T_\xi$ has a subdiagonal matrix,
\begin{equation*}
T_\xi = 
\left[ 
\begin{matrix}
0 & 0   & 0 & \\ 
T_\xi^{(0)} & 0 & 0 &  \\ 
0 & T_\xi^{(1)} & 0 &  \\ 
0 & 0 & T_\xi^{(2)} & \ddots \\ 
&  & \ddots & \ddots
\end{matrix}
\right],
\end{equation*}
where for each $j \in \mb{N}_0$,  $T_\xi^{(j)}: E^{\otimes j} \to E^{\otimes (j+1)}$ is defined at $\eta \in E^{\otimes j} $ by $T_\xi^{(j)} (\eta) = \xi \otimes \eta$.  
For arbitrary $k \in \mb{N}_0$ and $\xi \in \tens{E}{k}$, the creation operator $T_\xi$ is defined in an analogous fashion.
The \emph{algebraic tensor algebra}, $\mc{T}^0_+(E)$, is the subalgebra of $\LFE$ generated by the left action and creation operators.  
The principal object of study in the unweighted setting of \cite{Muhly2004a} is the \emph{Hardy algebra}, $H^\infty(E)$, the closure of $\mc{T}^0_+(E)$ in the ultraweak topology of the $W^*$-algebra $\LFE$.

If $F$ is an $(M,N) \ W^*$-correspondence with left action $\sigma$ and $\xi \in E$, the  \emph{(left) insertion operator} associated with $\xi$ is the map $L^F_\xi \in \ms{L}(F ,E \otimes_\sigma F)$, defined at $\eta \in F$ by $L^F_\xi (\eta) = \xi \otimes \eta$.
We omit the superscript $F$ if it is clear from context. 
We note that a creation operator is a specific example of an insertion operator. 
The following facts are easily verified for $\xi, \eta \in E$.
The operator  $L_\xi$ is bounded and $\Vert L_\xi \Vert \leq \Vert \xi \Vert$.  
 For $a \in M$ and $c \in N$,
 $L_{a \cdot \xi \cdot c} = (\varphi(a) \otimes I_F) L_\xi \sigma(c)$. 
If $S \in \ms{L}(E)$,  $T \in \ms{L}(F)$, and $T$ is a left $M$-module homomorphism, 
 $L_{S\xi}  T = (S \otimes T)  L_\xi$.  
 For any $\zeta \in F$,
$L_\xi^*(\eta \otimes \zeta) = \langle \xi, \eta \rangle \cdot \zeta $; hence, 
 $L_\xi^*  L_\eta = \sigma \langle \xi,\eta \rangle$.  
 Finally,
$L_\xi L_\eta^* = \theta_{\xi, \eta} \otimes I_F$ where $\theta_{\xi,\eta} \in \ms{L}(E)$ denotes the rank one operator defined at $\zeta \in E$ by $\theta_{\xi,\eta}(\zeta) = \xi \cdot \langle \eta, \zeta \rangle$.

\subsection{Duality}

Let $\sigma: M \to \ms{B}(H)$ be a  normal, unital $*$-homomor\-phism for a Hilbert space $H$.
If $\psi: M \to \ms{B}(K)$ is another such map for Hilbert space $K$, $\mc{I}(\sigma, \psi)$ denotes the space of \emph{intertwiners}, i.e. the collection of operators $T \in \ms{B}(H, K)$ such that $T \circ \sigma(a) = \psi(a) \circ T$ for every $a \in M$.
With this notation, we define $E^\sigma:= \mc{I}(\sigma, \sigma^E \circ \varphi)$, called the \emph{$\sigma$-dual of $E$}, a subspace of $\ms{B}(H, E \otimes_{\sigma} H)$. 
By Proposition 3.2 of \cite{Muhly2004a},  $E^\sigma$ is a $W^*$-cor\-res\-pon\-dence over the $W^*$-algebra $\sigma(M)'$ with $a \cdot \xi \cdot b : = (I_E \otimes a) \xi b$ and $\langle \xi, \eta \rangle : = \xi^* \eta$ for $a, b \in \sigma(M)'$ and $\xi, \eta \in E^\sigma$. 
We write $\varphi': \sigma(M)' \to \ms{L}(E^\sigma)$ for the left action map.

Let us recall several maps that arise in \cite{Muhly2004a}, simultaneously providing notation for future use.  
In this subsection, we will suppose that $E$ is full.
For $k \in \mb{N}$, there  is a well-defined  $\sigma(M)'$ $W^*$-cor\-res\-pon\-dence isomorphism $\Lambda_k^\sigma : (E^\sigma)^{\otimes k} \to (E^{\otimes k})^\sigma$ with
\begin{equation*}
\Lambda_k^\sigma \left( \substack{k \\ \otimes \\ i = 1} \xi_i \right) = (I_{k-1} \otimes \xi_1) \cdots (I_2 \otimes \xi_{k-2})(I_1 \otimes \xi_{k-1}) \xi_k, \quad\substack{k \\ \otimes \\ i = 1} \xi_i  \in (E^\sigma)^{\otimes k}
\end{equation*}
where $I_j$ denotes the identity operator on $\tens{E}{j}$.
To include $k = 0$, define $\Lambda_0^\sigma: (E^\sigma)^{\otimes 0} \to (E^{\otimes 0})^\sigma$ at $A \in \sigma(M)'$ and $ h \in H $ by $
(\Lambda_0^\sigma \left( A \right)) (h) = 1_M \otimes Ah$. 
The inclusion map $\iota: \sigma(M)' \to \ms{B}(H)$ is a faithful, normal, unital $*$-homomorphism, and for every $k \in \mb{N}_0$, the map $U_k^\sigma : (E^\sigma)^{\otimes k} \otimes_\iota H \to E^{\otimes k} \otimes_\sigma H $ defined for $\xi \in (E^\sigma)^{\otimes k}$ and $h \in H$ by $U_k^\sigma( \xi \otimes h) = \Lambda_k^\sigma(\xi)(h)$ 
is a Hilbert space isomorphism identifying the induced representation spaces.
Equivalently, $U_k^\sigma  L_\xi^H = \Lambda_k^\sigma(\xi)$.
The map $U_\infty^\sigma: = \sum_{k = 0}^\infty \oplus U_k^\sigma$ identifies the spaces $\ms{F}(E^\sigma) \otimes_\iota H$ and $\ms{F}(E) \otimes_\sigma H$ (\cite{Muhly2004a}; Lemma 3.8).
Let $Ad(U_\infty^\sigma): \ms{B} \left( \ms{F}(E^\sigma) \otimes_\iota H\right) \to \ms{B} \left(  \ms{F}(E) \otimes_\sigma H\right)$, denote the isomorphism  that sends an operator $T \in \ms{B} \left( \ms{F}(E^\sigma) \otimes_\iota H\right)$ to $U_\infty^\sigma T U_\infty^{\sigma *}$.  
It follows that $\rho^\sigma: \ms{L}(\ms{F}(E^\sigma)) \to \ms{B}(\ms{F}(E) \otimes_\sigma H)$, defined by $
\rho^\sigma = Ad(U_\infty^\sigma) \circ \iota^{\ms{F}(E^\sigma)}$
is a faithful, normal, unital $*$-homomorphism; $\rho^\sigma$ is called ``$\rho$'' in Theorem 3.9 of \cite{Muhly2004a}.
Specifically, for $Y \in \ms{L}(\ms{F}(E^\sigma))$,
$\rho^\sigma(Y) = U_\infty^\sigma (Y \otimes I_H) U_\infty^{\sigma *}$.  
The map $\pi^\sigma: \LFE \to \ms{B}(\ms{F}(E^\sigma) \otimes_\iota H)$, defined by $
\pi^\sigma = Ad(U_\infty^{\sigma *}) \circ \sigma^{\ms{F}(E)}$
 is also a faithful, normal, unital $*$-homomorphism; $\pi^\sigma$ is called ``$\rho$'' in Section 5 of \cite{Muhly2004a}.
For $Y \in \ms{L}(\ms{F}(E))$, $\pi^\sigma(Y) = U_\infty^{\sigma *} (Y \otimes I_H) U_\infty^\sigma$.

To summarize, using the $W^*$-algebra $M$, the $W^*$-correspondence over the algebra $E$, and the representation of the algebra $\sigma$, we've defined the maps $\Lambda_k^\sigma$, $U_k^\sigma$, $U_\infty^\sigma$, $\rho^\sigma$, and $\pi^\sigma$.
We repeat the process with the $W^*$-algebra $\sigma(M)'$, the correspondence $E^\sigma$, and the representation $\iota$.
The analogue of $\sigma(M)'$ is the commutant of the image of $\iota$ in $\ms{B}(H)$, which is precisely $\sigma(M)$.  
The analogue of $\iota$ is the inclusion map that we denote by $\jmath: \sigma(M) \to \ms{B}(H)$.  
We write $E^{\sigma \iota}$ in place of $(E^\sigma)^\iota$, a $W^*$-correspondence over $\sigma(M)$ with left action $\varphi_\infty'': \sigma(M) \to \ms{L}(\ms{F}(E^{\sigma \iota}))$.
We obtain the analogous collection of maps $\Lambda_k^\iota$, $U_k^\iota$, $U_\infty^\iota$, $\rho^\iota$, and $\pi^\iota$.
Repeating the process once again would utilize the $W^*$-algebra $\sigma(M)$, the correspondence $E^{\sigma \iota}$, and the representation $\jmath$, but if $\sigma$ is faithful this is unnecessary since these constructs may be naturally identified with $M$, $E$, and $\sigma$ as follows.  
The identifications of $M$ with $\sigma(M)$ and $\sigma$ with $\jmath$ are immediate.  
After appropriate identifications, the map $\omega: E \to E^{\sigma \iota}$ defined at  $\xi \in E$ by $\omega(\xi) = U_1^{\sigma *}  L^H_\xi$ is an isomorphism of $W^*$-correspondences that is studied in and around Theorem 3.6 of \cite{Muhly2004a}.
Defining $\omega_k : = \omega \otimes \cdots \otimes \omega$ identifies $\tens{E}{k}$ with $\tens{(E^{\sigma \iota})}{k}$ for each $k$, and $\omega_\infty: = \sum_{k = 0}^\infty \omega_k$ identifies $\ms{F}(E)$ with $\ms{F}(E^{\sigma \iota})$.
For every $k \in \mb{N}_0$, $U_k^\sigma U_k^\iota (\omega_k \otimes I_H)  = I_{E^{\otimes k} \otimes_\sigma H}$, as demonstrated in and around Corollary 3.10 of \cite{Muhly2004a}.  
Equivalently $U_k^\sigma  \circ \Lambda_k^\iota (\omega_k \xi)  = L^H_\xi$ for every $\xi \in \tens{E}{k}$.
More generally, $U_{k+m}^\sigma \left( I'_{m} \otimes  \Lambda_k^\iota (\omega_k \xi) \right)=  L_\xi^{(\tens{E}{m} \otimes_\sigma H)} U_m^\sigma$  for $k,m \in \mb{N}_0$ and $\xi \in \tens{E}{k}$,
where $I_m'$ denotes the identity operator on $\tens{(E^\sigma)}{m}$.   
It follows that
$U_\infty^\sigma U_\infty^\iota (\omega_\infty \otimes I_H)  = I_{\F{E} \otimes_\sigma H}$.
Equations (5.1) and (5.2) of \cite{Muhly2004a} show how $\pi^\sigma$ acts on the generators of  $H^\infty(E)$:
for $a \in M$, $\pi^\sigma (\phiinf{a})  = I'_{\infty} \otimes \sigma(a)$, and for $\xi \in E$, $\pi^\sigma (T_\xi )$ has the subdiagonal matrix,
\begin{equation*}
\pi^\sigma (T_\xi ) = 
\left[ 
\begin{matrix}
0 & 0   & 0 & \\ 
\omega\xi & 0 & 0 &  \\ 
0 & I'_{1} \otimes \omega \xi & 0 &  \\ 
0 & 0 & I'_{2} \otimes \omega \xi & \ddots \\ 
&  & \ddots & \ddots
\end{matrix}
\right].
\end{equation*}

\subsection{The Weighted Hardy Algebra} 
We now turn to the weighted setting of \cite{Muhly2016}.  
Throughout, the sequence $X = \{X_k\}_{k = 0}^{\infty}$ will denote an \emph{admissible} sequence; this means that
$X_k \in \varphi_{k}(M)^c$ for each $k \in \mb{N}_0$, 
$X_k \geq 0$ for each $k \in \mb{N}_0$, 
$X_0 = 0$, 
$X_1$ is invertible, and finally that 
$\limsup_{k \to \infty} \Vert X_k \Vert ^{1/k} < \infty$,
where $\varphi_{k}(M)^c$ denotes the commutant of $\varphi_{k}(M)$ in $\ms{L}(\tens{E}{k})$ 
(\cite{Muhly2016}, Definition 4.1). 
Let  $R = \{R_k\}_{k = 0}^{\infty}$ denote the sequence 
\begin{equation*} 
R_k  = 
\begin{cases}
I_M, &\text{if  } k = 0\\
\left( \sum_{j = 1}^k \left( \sum_{\alpha \in \mc{F}(k,j)} \substack{j \\ \otimes \\ i = 1}  X_{\alpha(i)} \right)\right)^{1/2},  &\text{if  } k > 0
\end{cases},
\end{equation*}
where 
$\mc{F}(k,j): = \left\{ \alpha: \{1, \ldots, j\} \to \mb{N} \bigm| \sum_{i = 1}^j \alpha(i) = k \right\}$ if $1 \leq j \leq k$ (\cite{Muhly2016}, Equation 4.4).
Each $R_k$ is a positive, invertible element in $\varphi_k(M)^c$.
A sequence of operators $Z = \{Z_k\}_{k = 0}^{\infty}$  is called a \emph{sequence of weights associated with $X$} if $Z_k$ is invertible and belongs to $\varphi_{k}(M)^c$ for each $k \in \mb{N}_0$, $Z_0 = I_M$, and $Z^{(k)*} Z^{(k)} = R_k^{-2}$ for all $k \in \mb{N}_0$, where $Z^{(k)}$, thought of as the product of the weights $\{Z_k, Z_{k-1}, \ldots, Z_1, Z_0 \}$, is defined
$Z^{(k)} :  =
Z_k (I_1 \otimes Z_{k-1} ) \cdots (I_{k-1} \otimes Z_{1}) (I_k \otimes Z_0) \in \varphi_{k}(M)^c$
(\cite{Muhly2016}, Definition 4.6).  
We define $Z^{(k,j)} := Z^{(k)}(I_{k - j} \otimes Z^{(j)})^{-1}$ when $0 \leq j \leq k$, equivalently  $Z^{(k,j)} = 
Z_k (I_1 \otimes Z_{k-1} ) \cdots (I_{ k-j-1} \otimes Z_{j+1})$, which may be thought of as the product of the weights $\{Z_k, Z_{k-1}, \ldots, Z_{j + 1} \}$.
One example of a weight sequence is the \emph{canonical sequence} $Z = \{Z_k\}_{k = 0}^\infty$, with $Z_0 : = I_M$ and $Z_k: = R_k^{-1}(I_1 \otimes R_{k-1}), k \geq 1$.  
We note that the unweighted setting studied, for instance, in \cite{Muhly1998a} and \cite{Muhly2004a} occurs when $Z$ is the sequence of identity operators.  
To work in the setting of \cite{Popescu2010}, we take the $W^*$-correspondence $E$ to be $\mb{C}^d$ and add the condition that $X$ be a sequence of diagonal operators. 

If $Z$ is a sequence of weights associated with $X$, then $\sup_{j \in \mb{N}_0} \Vert Z_j \Vert < \infty$ (\cite{Muhly2016}, proof of Theorem 4.5).  
It follows that for every $k \in \mb{N}_0$ the operator $D^Z_k = D_k  := diag [ 0, \ldots, 0,  Z^{(k)}, Z^{(k+1,1)} , Z^{(k+2,2)}, \ldots ]$ belongs to $\LFE$.  
We omit the superscript `$Z$' when the weight sequence is understood.
For $\xi \in E^{\otimes k}$, we define the \emph{weighted creation operator} $W^Z_\xi = W_\xi$ in $\ms{L}(\ms{F}(E))$ by $W_\xi  : = D_k \cdot T_\xi$.  
The matrix for $W_\xi$ is $k$-subdiagonal;
\begin{equation*}
W_\xi = 
\left[ 
\begin{matrix}
0 & 0 & 0 & \\
\vdots & \vdots & \vdots &  \\
0 &  &  &  \\ 
Z^{(k)} T_\xi^{(0)} & 0 & &  \\ 
0 & Z^{(k+1,1)} T_\xi^{(1)}& 0 &  \\ 
\vdots & 0 & Z^{(k + 2,2)} T_\xi^{(2)}& \ddots \\ 
&  & \ddots & \ddots
\end{matrix}
\right].
\end{equation*}
where $T_\xi^{(j)}$ maps  $\eta \in \tens{E}{j}$ to $\xi \otimes \eta  \in \tens{E}{j + k}$ for $j \in \mb{N}_0$.  
We observe that while our arrival at the definition of $W_\xi$ differs from that in \cite{Muhly2016}, a comparison of the preceding matrix with equation (3.2) in \cite{Muhly2016} confirms that the two definitions agree.
When $k,l \in \mb{N}_0$; $a, b \in M$; $\xi \in \tens{E}{k}$; and $\eta \in \tens{E}{l}$; 
$W_{a \cdot \xi \cdot b}  = \varphi_\infty(a) \circ W_\xi \circ \varphi_\infty(b)$ and $W_\xi \circ W_\eta  = W_{\xi \otimes \eta}$.  
For $a \in M$, $W_a = \varphi(a)$.
The \emph{$Z$-algebraic tensor algebra}, $\mc{T}^0_+(E,Z)$, is the subalgebra of $\LFE$ generated by the left action and weighted creation operators. 
The \emph{$Z$-Hardy algebra}, $H^\infty(E,Z)$, is the closure of $\mc{T}^0_+(E,Z)$ in the ultraweak topology of $\LFE$. 

Let $\sigma: M \to \ms{B}(H)$ be a normal, unital $*$-homomorphism.  
We observe that $\mc{I}(\sigma^E \circ \varphi, \sigma)^* = \mc{I}( \sigma, \sigma^E \circ \varphi) = E^\sigma$, which was defined above. 
For a point $\mf{z} \in \mc{I}(\sigma^E \circ \varphi, \sigma)$ and $k \in \mb{N}_0$, we define the \emph{$k^{th}$ tensorial power}, $\mf{z}^{(k)}: =\mf{z} (I_1 \otimes \mf{z}) \cdots (I_{k-1} \otimes \mf{z})$.
The intertwining property of $\mf{z}$ guarantees that $\mf{z}^{(k)}$ is a well-defined operator in $\ms{B}(\tens{E}{k} \otimes_\sigma H,H)$; moreover,  $\mf{z}^{(k)} \in \mc{I}(\sigma^{\tens{E}{k}} \circ \varphi_k, \sigma)$.  
Computation shows that $\mf{z}^{(k+l)} = \mf{z}^{(k)}(I_k \otimes \mf{z}^{(l)})$ for $k,l \in \mb{N}_0$.
We define
\begin{equation*}
\mb{D}(X, \sigma) := \left\{\mf{z} \in \mc{I}(\sigma^E \circ \varphi, \sigma) \biggm| \left\Vert \sum_{k = 1}^\infty \mf{z}^{(k)} (X_k \otimes I_H) \mf{z}^{(k)*} \right\Vert < 1 \right\};
\end{equation*}
(\cite{Muhly2016}, Definition 4.3).  
Here and throughout  unless stated otherwise, infinite sums indicate convergence with respect to  the ultraweak topology for a $W^*$-correspondence or $W^*$-algebra and the norm topology for a Hilbert space.  
If $\mf{z} \in \mb{D}(X, \sigma)$, there is an ultraweakly continuous, completely contractive representation $(\sigma \times \mf{z}) : H^\infty(E, Z) \to \ms{B}(H)$ such that $(\sigma \times \mf{z}) (\phiinf{a})  = \sigma(a)$ for every $a \in M$ and $(\sigma \times \mf{z}) (W_\xi)  = \mf{z}^{(k)} L_\xi$ for every $\xi \in \tens{E}{k}$ with $k \geq 1$ (\cite{Muhly2016}, Corollary 5.9).
For  $Y \in H^\infty(E, Z)$, we define the noncommutative function $\widehat{Y}: \mb{D}(X, \sigma) \to \ms{B}(H)$ at the point $\mf{z} \in \mb{D}(X, \sigma)$ by $\widehat{Y}(\mf{z}) = (\sigma \times \mf{z})(Y)$.

\section{An Orthonormal Basis for Tensor Products} \label{onbChap}

The proof of our weighted commutant lifting theorem in Section \ref{wclChap} is modeled after the proof of a commutant lifting theorem by Alvaro Arias in \cite{Arias2004}.  
Working in the setting of complex $n$-space, Arias makes use of the fact that if $\{e_i\}_{i = 1}^n$ is an orthonormal basis for $\mb{C}^n$ and $k \in \mb{N}$, then one orthonormal basis for $(\mb{C}^n)^{\otimes k}$ consists of the simple tensors $\substack{k \\ \otimes \\ i = 1} e_{\alpha(i)}$ such that $\alpha$ is any function from $\{1, \ldots, k\}$ to $\{1, \ldots, n\}$.
In search of a similarly constructed orthonormal basis for $\tens{E}{k}$ or, more generally, for any correspondence formed by tensor product, let us consider an obstacle that must be overcome. 
Let $F$ be a nonzero $(M,N)$ $W^*$-correspondence with orthonormal basis $A$, and let $G$ be a nonzero $(N,P)$ $W^*$-correspondence with left action $\sigma$ and orthonormal basis $B$ for $W^*$-algebras $M$, $N$, and $P$. 
If $M = N = P = \mb{C}$, then $F$ and $G$ are simply Hilbert spaces,  the $W^*$-tensor product of $F$ and $G$ is the same as their Hilbert space tensor product, and 
$\{ \alpha \otimes \beta: \alpha \in A, \beta \in B \}$,
which is a Hilbert space orthonormal basis for the tensor product, is also an orthonormal basis in the sense of Hilbert $W^*$-modules.
For general $M$, $N$, and $P$, the issue is more complicated.  
If $\alpha \in A$ and $\beta \in B$, there is no reason to think that  $\langle \alpha \otimes \beta, \alpha \otimes \beta \rangle$ is a nonzero projection in $P$, in which case $\alpha \otimes \beta$ cannot belong to an orthonormal basis for $F \otimes_\sigma G$.  
In fact, it is possible that $F \otimes_\sigma G$ is zero, in which case $F \otimes_\sigma G$ does not \emph{have} an orthonormal basis.
We begin this section by overcoming these obstacles to construct an orthonormal basis for a nonzero tensor product of two $W^*$-correspondences that consists of simple tensors.  
Then we obtain, via an inductive process, a family of orthonormal bases, one basis for each nonzero $\tens{E}{k}$, which we use in Section \ref{wclChap} to prove the weighted commutant lifting theorem using Arias' technique.

Along with $M$, let $N$ and $P$ be $W^*$-algebras.  
Let $F$ be an $(M,N)$ $W^*$-correspondence, and let $G$ be an $(N,P)$ $W^*$-correspondence with left action $\sigma: N \to \ms{L}(G)$.  
We will write $a \cdot \eta$ in place of $\sigma(a)(\eta)$ for $a \in N$ and $\eta \in G$.  
If $q$ is a projection in $N$, then $q \cdot G = \{ q \cdot \eta \mid \eta \in G \}$ is the kernel of $\sigma(1_N - q)$.  
Thus $q \cdot G$ is an ultraweakly closed (right) $P$-submodule of $G$ and is therefore a Hilbert $W^*$-module over $P$
(\cite{Baillet1988}; Consequence 1.8).  
As such,  for every projection $q \in N$ such that $q \cdot G \neq \{0\}$ we may fix an orthonormal basis for $q \cdot G$.
Define $Q$ to be the, possibly empty, subset of $F$,
\begin{equation*}
Q : =  \left\{\xi \in F \mid \langle \xi, \xi \rangle \text{ is a projection in $N$  and } \langle \xi, \xi \rangle \cdot G \neq \{0\} \right\}.
\end{equation*}
If $Q$ is nonempty and $\xi \in Q$, define $B(\xi)$ to be the orthonormal basis for $\langle \xi, \xi \rangle \cdot G$ chosen above. 
Fix $A$, an orthonormal basis for $F$.  
If $\alpha \in A$, then $\langle \alpha, \alpha \rangle$ is a projection in $N$. 
The subset of $F \otimes_\sigma G$,
\begin{equation*}
C :=\{ \alpha \otimes \beta \mid \alpha \in A \cap Q \text{ and } \beta \in B(\alpha) \},
\end{equation*}
is empty if and only if $A \cap Q$ is empty.
For simplicity, when we say that ``$\alpha \otimes \beta \in C$'', we mean that $\alpha \in A \cap Q \text{ and } \beta \in B(\alpha)$. 
The emphasis is needed since the expression of an element in $F \otimes_\sigma G$ in terms of simple tensors is not unique.
If $\alpha \otimes \beta \in C$, then $\langle \alpha, \alpha \rangle \cdot \beta = \beta$.  
In our first theorem, we show that when $F \otimes_\sigma G$ is nonzero, $C$ is an orthonormal basis.

\begin{theorem}\label{onbEtensF}
The correspondence $F \otimes_\sigma G$ is nonzero if and only if $C$ is nonempty.  
In this case, $C$ is an orthonormal basis for $F \otimes_\sigma G$, and if $\alpha_i \otimes \beta_i \in C$ for $i = 1,2$, then
\begin{equation} 
\label{onbEtensF_e4}
\langle \alpha_1 \otimes \beta_1, \alpha_2 \otimes \beta_2 \rangle
= \begin{cases}
\langle \beta_2, \beta_2 \rangle & \text{ if } \alpha_1 = \alpha_2 \text{ and } \beta_1 = \beta_2 \\
0 & \text{ otherwise} 
\end{cases}.
\end{equation}
\end{theorem}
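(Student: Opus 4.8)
The plan is to prove the three assertions in an order in which each feeds the next. First I would establish the inner-product formula \eqref{onbEtensF_e4} by a direct computation, writing $p_\alpha := \langle\alpha,\alpha\rangle$ for $\alpha\in A$: for $\alpha_i\otimes\beta_i\in C$ one has $\langle\alpha_1\otimes\beta_1,\alpha_2\otimes\beta_2\rangle = \langle\beta_1,\sigma\langle\alpha_1,\alpha_2\rangle\beta_2\rangle$, which is $0$ when $\alpha_1\neq\alpha_2$ since $A$ is an orthonormal basis for $F$, while for $\alpha_1=\alpha_2=\alpha$ we have $p_\alpha\cdot\beta_2 = \beta_2$ (as $\beta_2\in B(\alpha)\subseteq p_\alpha\cdot G$), so the expression collapses to $\langle\beta_1,\beta_2\rangle$, equal to $\langle\beta_2,\beta_2\rangle$ or $0$ according as $\beta_1=\beta_2$ or not, because $B(\alpha)$ is an orthonormal basis for $p_\alpha\cdot G$. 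This computation simultaneously shows that $C$ is an orthonormal system — its elements are pairwise orthogonal and each $\langle\alpha\otimes\beta,\alpha\otimes\beta\rangle=\langle\beta,\beta\rangle$ is a nonzero projection in $P$ — and in particular that every member of $C$ is nonzero, so $C\neq\emptyset$ already forces $F\otimes_\sigma G\neq\{0\}$.

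For the reverse implication, suppose $C=\emptyset$. Since $p_\alpha$ is a projection for each $\alpha\in A$, this forces $p_\alpha\cdot G=\{0\}$, i.e.\ $\sigma(p_\alpha)=0$, for every $\alpha\in A$. Using $\alpha=\alpha\cdot p_\alpha$, the Parseval identity $\langle\xi_1,\xi_2\rangle=\sum_{\alpha\in A}\langle\xi_1,\alpha\rangle\langle\alpha,\xi_2\rangle$, and the normality of $\sigma$, one gets $\sigma\langle\xi_1,\xi_2\rangle=\sum_{\alpha}\sigma\langle\xi_1,\alpha\rangle\,\sigma(p_\alpha)\,\sigma\langle\alpha,\xi_2\rangle=0$ for all $\xi_1,\xi_2\in F$; hence the semi-inner product defining $F\otimes_\sigma G$ vanishes identically on the algebraic tensor product, so $F\otimes_\sigma G=\{0\}$. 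Together with the previous paragraph this gives the stated equivalence.

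It remains to show that when $F\otimes_\sigma G\neq\{0\}$ the orthonormal system $C$ is \emph{maximal}, hence an orthonormal basis. It suffices to prove $C^\perp=\{0\}$, because if $C$ were properly contained in an orthonormal system $C'$, then any $\gamma\in C'\setminus C$ would have $\langle\gamma,\gamma\rangle$ a nonzero projection and be orthogonal to all of $C$, contradicting $C^\perp=\{0\}$. So let $\zeta\in F\otimes_\sigma G$ satisfy $\langle\alpha\otimes\beta,\zeta\rangle=0$ for all $\alpha\otimes\beta\in C$. I would first upgrade this to $\langle\alpha\otimes\eta,\zeta\rangle=0$ for \emph{every} $\alpha\in A$ and $\eta\in G$: if $\alpha\notin Q$ then $\sigma(p_\alpha)=0$, so $\alpha\otimes\eta=\alpha\otimes\sigma(p_\alpha)\eta=0$; if $\alpha\in A\cap Q$ then $\alpha\otimes\eta=\alpha\otimes(p_\alpha\cdot\eta)$ with $p_\alpha\cdot\eta\in p_\alpha\cdot G$, and expanding $p_\alpha\cdot\eta=\sum_{\beta\in B(\alpha)}\beta\cdot\langle\beta,p_\alpha\cdot\eta\rangle$ yields $\langle\alpha\otimes\eta,\zeta\rangle=\sum_{\beta}\langle\beta,p_\alpha\cdot\eta\rangle^*\langle\alpha\otimes\beta,\zeta\rangle=0$. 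Expanding a general $\xi\in F$ over $A$ in the same fashion then gives $\langle\xi\otimes\eta,\zeta\rangle=0$ for all simple tensors; since their linear span is ultraweakly dense in the self-dual completion $F\otimes_\sigma G$ and the inner product against $\zeta$ is bounded and ultraweakly continuous, it vanishes identically, so $\langle\zeta,\zeta\rangle=0$ and $\zeta=0$.

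I expect the real work to lie in this last step — namely justifying the ultraweakly convergent reconstruction expansions in $F$ and in $p_\alpha\cdot G$, the ultraweak density of the span of simple tensors in the self-dual completion, and the fact that a bounded module functional there is determined by its restriction to that dense submodule; all of this is standard in Paschke's and Baillet's treatment of self-dual Hilbert $W^*$-modules but needs to be invoked with care. The conceptual heart of the argument is the dichotomy $\alpha\in Q$ versus $\alpha\notin Q$: discarding exactly those basis vectors $\alpha$ of $F$ with $\langle\alpha,\alpha\rangle\cdot G=\{0\}$, and replacing each surviving $\alpha$ by the family $\{\alpha\otimes\beta:\beta\in B(\alpha)\}$, is precisely what promotes $C$ from an orthonormal system to an orthonormal basis.
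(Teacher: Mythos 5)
Your proposal is correct and follows essentially the same route as the paper: equation \eqref{onbEtensF_e4} by direct computation using the projection properties of $A$ and $B(\alpha)$, the vanishing of $F\otimes_\sigma G$ when $A\cap Q=\emptyset$ via the Fourier/Parseval expansion over $A$, and maximality by expanding simple tensors over $C$ and invoking ultraweak density of their span together with ultraweak continuity of the pairing against $\zeta$. The only cosmetic differences are that you phrase maximality as $C^\perp=\{0\}$ rather than contradicting a hypothetical larger orthonormal set, and you kill the empty case by showing $\sigma\langle\xi_1,\xi_2\rangle=0$ rather than showing each simple tensor vanishes term by term.
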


\begin{proof} 
First, we show that for any $\xi \in F$ and $\eta \in G$,
\begin{equation}
\label{onbEtensF_e3} 
\xi \otimes \eta  
= \begin{cases}
\sum_{\alpha \in A \cap Q} \ \alpha \otimes  \langle \alpha, \xi \rangle \cdot \eta &  \text{ if } A \cap Q \neq \emptyset \\
0 & \text{ if } A \cap Q = \emptyset
\end{cases}.
\end{equation}
By Fourier expansion with respect to $A$, we have $\xi = \sum_{\alpha \in A} \alpha \cdot \langle \alpha, \xi \rangle$
(\cite{Paschke1973}, proof of Theorem 3.12).
It follows that $\xi \otimes \eta = \sum_{\alpha \in A } \alpha \otimes  \langle \alpha, \xi \rangle \cdot \eta$.
For every $\alpha \in A$, we have $\left\langle  \alpha \cdot \langle \alpha, \alpha \rangle - \alpha, \alpha \cdot \langle \alpha, \alpha \rangle - \alpha \right\rangle = 0$, so $\alpha \cdot \langle \alpha, \alpha \rangle = \alpha$.  
Thus $\langle \alpha, \xi \rangle \cdot \eta$ belongs to $\langle \alpha, \alpha \rangle \cdot G$, so if $\alpha \notin Q$,  then $\langle \alpha, \xi \rangle \cdot \eta = 0$.  
Equation \eqref{onbEtensF_e3} follows.

When $C$ is empty, $A \cap Q$ is also empty, so  by equation \eqref{onbEtensF_e3} every simple tensor in $F \otimes_\sigma G$ is zero.   
Thus $F \otimes_\sigma G = \{0\}$ since $F \otimes_\sigma G$ is the ultraweak closure of the linear span of the simple tensors.

Suppose that $C$ is not empty.  
Using the properties of $A$ and $C$,
equation \eqref{onbEtensF_e4} follows from straightforward computations. 
It follows that for any $\alpha \otimes \beta \in C$, $
\langle \alpha \otimes \beta, \alpha \otimes \beta \rangle$
is a nonzero projection in $P$, so $\alpha \otimes \beta \neq 0$ and $F \otimes_\sigma G \neq \{0\}$.  
It only remains to show that $C$ is an orthonormal basis for $F \otimes_\sigma G$.  
By equation \eqref{onbEtensF_e4}, $C$ is an orthonormal \emph{set}.  
Towards maximality, suppose that $C'$ is an orthonormal subset of $F \otimes_\sigma G$ such that $C \subseteq C'$.  
For any $\xi \in F$ and $\eta \in G$, if $\alpha$ belongs to $A \cap Q$, then as above,  $\langle \alpha, \xi \rangle \cdot \eta$ is an element in $\langle \alpha, \alpha \rangle \cdot G$.  
Using the Fourier expansion with respect to the orthonormal basis $B(\alpha)$,
$ \langle \alpha, \xi \rangle \cdot \eta
 = \sum_{\beta \in B(\alpha)} \beta \cdot \langle \beta, \langle \alpha, \xi \rangle \cdot \eta \rangle 
 = \sum_{\beta \in B(\alpha)} \beta \cdot \langle \alpha \otimes \beta, \xi \otimes \eta \rangle$.
Thus, by equation \eqref{onbEtensF_e3}, 
\begin{equation} \label{onbEtensF_e6} 
\xi \otimes \eta 
= \sum_{\alpha \in A \cap Q} \left( \sum_{\beta \in B(\alpha)} (\alpha \otimes \beta) \cdot \langle \alpha \otimes \beta, \xi \otimes \eta \rangle \right).
\end{equation}
Now if $C$ is \emph{properly} contained in $C'$, then there exists $\zeta \in C'$ such that for every $\alpha \otimes \beta \in C$, $\langle \zeta, \alpha \otimes \beta \rangle = 0$. 
Since $F \otimes_\sigma G$ is the ultraweak closure of the linear span of the simple tensors,  there is a net $\left \{ \sum_{j = 1}^{N_\lambda} \xi_{\lambda, j} \otimes \eta_{\lambda,j} \right\}_\lambda$ converging ultraweakly to $\zeta$ in $F \otimes_\sigma G$.  By equation \eqref{onbEtensF_e6},
\begin{multline*} 
\langle \zeta, \zeta \rangle  
= \lim_{\lambda}  \sum_{j = 1}^{N_\lambda} \langle \zeta, \xi_{\lambda, j} \otimes \eta_{\lambda,j}  \rangle  \\
 = \lim_{\lambda}  \sum_{j = 1}^{N_\lambda}  \left( \sum_{\alpha \in A \cap Q }  \left( \sum_{\beta \in B({\alpha}) } \langle \zeta, \alpha \otimes \beta \rangle \langle \alpha \otimes \beta, \xi_{\lambda, j} \otimes \eta_{\lambda,j} \rangle \right) \right) = 0,
\end{multline*} 
 contradicting the fact that $\langle \zeta, \zeta \rangle$ is a \emph{nonzero} projection.  
Therefore, $C$ is an orthonormal basis for $F \otimes_\sigma G$.
\end{proof}

Let us inductively construct a family $\{C_k\}$ of orthonormal bases, one basis for each nonzero $\tens{E}{k}$.  
First we establish some notation, mirroring the discussion prior to Theorem \ref{onbEtensF}.  
Fix an orthonormal basis for the Hilbert $W^*$-module $q \cdot E$ for every projection $q \in M$ such that $q \cdot E \neq \{0\}$.
Define 
\begin{equation*}
Q = \left\{\xi \in E \mid \langle \xi, \xi \rangle \text{ is a projection in $M$  and } \langle \xi, \xi \rangle \cdot E \neq \{0\} \right\}.
\end{equation*}  
If $\xi \in Q$, let $B(\xi)$ denote the orthonormal basis for  $\langle \xi, \xi \rangle \cdot E$ chosen above. 
Since $E$ is nonzero, let $A$ be an orthonormal basis for $E$.  
Let $C_0 = \{1_M\}$ where $1_M$ is the identity element in $M$.  
Let $C_1=A$.  
When $k \geq 2$,  define $C_k$ to be the possibly empty subset of $\tens{E}{k}$ consisting of the simple tensors $\substack{k \\ \otimes \\ i = 1} \xi_i  = \xi_1 \otimes \cdots \otimes \xi_k$ such that the following properties hold:
\begin{enumerate}[label=(\arabic*), ref=(\arabic*)]
\item \label{onbEk_ep1} $\xi_1 \in A \cap Q$,
\item \label{onbEk_ep2} $\xi_i \in B(\xi_{i-1}) \cap Q$ for every $i$ such that $1 < i < k$, and
\item \label{onbEk_ep3}$\xi_k \in B(\xi_{k-1})$.
\end{enumerate}
For simplicity, when we say that $\substack{k \\ \otimes \\ i = 1} \xi_i \in C_k$, we always mean that $\{\xi_i\}_{i = 1}^k$ satisfies properties   \ref{onbEk_ep1}, \ref{onbEk_ep2}, and \ref{onbEk_ep3}.
Notice that if $\substack{k \\ \otimes \\ i = 1} \xi_i \in C_k$, then for every $i$ such that $1 < i \leq k$, we have $\langle \xi_{i-1}, \xi_{i-1} \rangle \cdot \xi_i = \xi_i$.  
We arrive at the following theorem.

\begin{theorem}\label{onbEk}
For $k \in \mb{N}_0$, the correspondence 
$\tens{E}{k}$ is nonzero if and only if $C_k $ is nonempty.  
In this case, $C_k$ is an orthonormal basis for $E^{\otimes k}$, and for any $\xi = \substack{k \\ \otimes \\ i = 1} \xi_i$ and $ \eta = \substack{k \\ \otimes \\ i = 1} \eta_i $ in $C_k$, 
\begin{equation}
\label{onbEk_e1} 
\langle \xi , \eta \rangle = 
\begin{cases}
 \langle \eta_k, \eta_k \rangle & \text{ if } \xi_i = \eta_i \text{    for every $i$ such that } 1 \leq i \leq k\\
0 & \text{ otherwise }
\end{cases}.
\end{equation}
\end{theorem}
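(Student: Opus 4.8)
The plan is to argue by induction on $k$, reducing the inductive step to Theorem~\ref{onbEtensF}. For $k=0$ the correspondence $\tens{E}{0}=M$ is nonzero, $C_0=\{1_M\}$ is an orthonormal basis for it, and $\langle 1_M,1_M\rangle=1_M$, so \eqref{onbEk_e1} holds trivially. For $k=1$ we have $\tens{E}{1}=E$ and $C_1=A$; since $E$ is nonzero, $A$ is a nonempty orthonormal basis for $E$ by \cite{Paschke1973}, and \eqref{onbEk_e1} is just the orthonormality of $A$. So fix $k\ge 2$ and assume the statement for $k-1$; in particular $C_{k-1}$ is an orthonormal basis for $\tens{E}{k-1}$ and \eqref{onbEk_e1} holds with $k$ replaced by $k-1$.

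The key step is to recognize $C_k$ as the set ``$C$'' produced by Theorem~\ref{onbEtensF}. Using the standard associativity isomorphism for $W^*$-correspondences, I would identify $\tens{E}{k}=E\otimes_{\varphi_{k-1}}\tens{E}{k-1}$ with $\tens{E}{k-1}\otimes_{\varphi}E$, matching the simple tensor $\xi_1\otimes\cdots\otimes\xi_k$ with $(\xi_1\otimes\cdots\otimes\xi_{k-1})\otimes\xi_k$. Then I apply Theorem~\ref{onbEtensF} with $F=\tens{E}{k-1}$ (orthonormal basis $C_{k-1}$, by the inductive hypothesis), with $G=E$ and left action $\sigma=\varphi$, and with the chosen orthonormal bases of the modules $q\cdot E$ taken to be the ones already fixed. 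Write $C$ for the resulting set and $Q'$ for the set ``$Q$'' of that theorem (a subset of $\tens{E}{k-1}$). For $\alpha=\xi_1\otimes\cdots\otimes\xi_{k-1}\in C_{k-1}$ the inductive case of \eqref{onbEk_e1} gives $\langle\alpha,\alpha\rangle=\langle\xi_{k-1},\xi_{k-1}\rangle$, which is a projection; hence $\alpha\in Q'$ exactly when $\langle\xi_{k-1},\xi_{k-1}\rangle\cdot E\neq\{0\}$, i.e.\ exactly when $\xi_{k-1}\in Q$, and in that case the orthonormal basis of $\langle\alpha,\alpha\rangle\cdot E$ attached to $\alpha$ in Theorem~\ref{onbEtensF} coincides with $B(\xi_{k-1})$. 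Running these equivalences against the defining properties \ref{onbEk_ep1}--\ref{onbEk_ep3} shows that $\alpha\otimes\beta\in C$ if and only if $\alpha\otimes\beta\in C_k$; that is, $C=C_k$.

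With this identification, Theorem~\ref{onbEtensF} immediately yields that $\tens{E}{k}$ is nonzero if and only if $C=C_k$ is nonempty, and that in this case $C_k$ is an orthonormal basis for $\tens{E}{k}$. For the inner product formula, given $\xi=\xi_1\otimes\cdots\otimes\xi_k$ and $\eta=\eta_1\otimes\cdots\otimes\eta_k$ in $C_k$, I would set $\alpha_1=\xi_1\otimes\cdots\otimes\xi_{k-1}$ and $\alpha_2=\eta_1\otimes\cdots\otimes\eta_{k-1}$ in $C_{k-1}$; equation \eqref{onbEtensF_e4} then says that $\langle\xi,\eta\rangle$ equals $\langle\eta_k,\eta_k\rangle$ when $\alpha_1=\alpha_2$ and $\xi_k=\eta_k$, and is $0$ otherwise, while the inductive case of \eqref{onbEk_e1}, applied to $\langle\alpha_1,\alpha_2\rangle$, shows that $\alpha_1=\alpha_2$ is equivalent to $\xi_i=\eta_i$ for $1\le i\le k-1$. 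Combining, $\langle\xi,\eta\rangle=\langle\eta_k,\eta_k\rangle$ when $\xi_i=\eta_i$ for all $1\le i\le k$ and $0$ otherwise, which is \eqref{onbEk_e1}.

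I expect the main obstacle to be the bookkeeping in the middle paragraph: one must transport the construction of Theorem~\ref{onbEtensF} carefully across the associativity isomorphism so that simple tensors, the sets ``$Q$'', and the chosen bases all line up, and one must use the inductive inner-product formula to see that the defect projection $\langle\alpha,\alpha\rangle$ of a ``head'' $\alpha\in C_{k-1}$ depends only on its last tensor factor --- this is precisely what reconciles the condition ``$\xi_{k-1}\in Q$'' hidden inside $Q'$ with the ``$\cap\,Q$'' appearing in property \ref{onbEk_ep2}. Everything else is either a base-case check or a direct appeal to the already established Theorem~\ref{onbEtensF}.
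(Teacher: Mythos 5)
Your proposal is correct and follows essentially the same route as the paper: induction on $k$, with the inductive step obtained by applying Theorem~\ref{onbEtensF} to $\tens{E}{k-1}\otimes_\varphi E$, identifying the resulting set $C$ with $C_k$ via the associativity isomorphism and the observation that $\langle\alpha,\alpha\rangle=\langle\xi_{k-1},\xi_{k-1}\rangle$ forces the chosen bases to agree, and then deriving \eqref{onbEk_e1} from \eqref{onbEtensF_e4} together with the inductive inner-product formula. No gaps.
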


\begin{proof}
The cases when $k = 0$ and $k = 1$ follow readily.
Inductively, suppose the conclusions of the theorem are satisfied for some $k \in \mb{N}$.  
Towards applying Theorem \ref{onbEtensF} to $\tens{E}{k} \otimes_\varphi E$, define
\begin{equation*}
Q_k : =  \left\{\xi \in \tens{E}{k} \mid \langle \xi, \xi \rangle \text{ is a projection in $M$  and } \langle \xi, \xi \rangle \cdot E \neq \{0\} \right\}.
\end{equation*}
If $\xi \in Q_k$, let $B(\xi)$ denote the orthonormal basis for $\langle \xi, \xi \rangle \cdot E$.  
Define
\begin{equation*}
C_{k+1}' : = \left\{ \left(\substack{k \\ \otimes \\ i = 1} \xi_i \right) \otimes \xi_{k+1} \biggm| \substack{k \\ \otimes \\ i = 1} \xi_i \in C_k \cap Q_k \text{ and } \xi_{k+1} \in B\left(\substack{k \\ \otimes \\ i = 1} \xi_i \right) \right\}.
\end{equation*}
Using the inductive assumption we see that $C_{k+1}$ is the image of $C_{k+1}'$ under the natural isomorphism between $\tens{E}{k} \otimes_\varphi E$ and $\tens{E}{k+1}$.  
It now follows from  Theorem \ref{onbEtensF} that $\tens{E}{k+1}$ is nonzero if and only if $C_{k+1}$ is nonempty and that $C_{k+1}$ is an orthonormal basis in that case.
If $\xi = \substack{k+1 \\ \otimes \\ i = 1} \xi_i$ and $\eta = \substack{k+1 \\ \otimes \\ i = 1} \eta_i $ belong to $C_{k+1}$, then $\left(\substack{k \\ \otimes \\ i = 1} \xi_i \right) \otimes \xi_{k+1}$ and $\left(\substack{k \\ \otimes \\ i = 1} \eta_i \right) \otimes \eta_{k+1}$ belong to $C_{k+1}'$ and $\left(\substack{k \\ \otimes \\ i = 1} \xi_i \right)$ and $\left(\substack{k \\ \otimes \\ i = 1} \eta_i \right)$ belong to $C_k$.  
Since $\langle \eta_k, \eta_k \rangle \cdot \eta_{k+1} = \eta_{k+1}$ and 
$\langle \xi, \eta \rangle 
 = \left\langle  \xi_{k+1}, \left\langle \substack{k \\ \otimes \\ i = 1} \xi_i ,\substack{k \\ \otimes \\ i = 1} \eta_i \right\rangle\cdot  \eta_{k+1} \right\rangle$, 
 equation \eqref{onbEk_e1} follows from Theorem \ref{onbEtensF} and the inductive hypothesis.
\end{proof}

We devote the remainder of the section to establishing notation and giving technical lemmas that will be useful in proving our main theorems.  
Define $\mb{S}:=\left\{k \in \mb{N}_0 \mid E^{\otimes k} \neq \{0\} \right\}$, which is either $\mb{N}_0$ or $\{0, 1, \ldots, n\}$ for some $n \in \mb{N}$. 
For $k \in \mb{N}_0$, define $A_k$ to be $C_k$ when $k \in \mb{S}$ and the zero set when $k \notin \mb{S}$. 
If $k \in \mb{N}_0$ then $v_k \in \ms{L}(\tens{E}{k},\ms{F}(E))$ denotes the  isometry mapping $\xi \in \tens{E}{k}$ to $\widehat{\xi} : = ( \delta_{i = k} \ \xi )_{i = 0}^\infty$ where $\delta_{i = k}$ is $1$ when $i = k$ and is otherwise zero. 
A handy fact is that a diagonal operator $S = diag[S_0, S_1, S_2, \cdots] \in \LFE$ may be expressed as $S = \sum_{j = 0}^\infty v_j S_j v_j^*$. 
The insertion operator $L_{\widehat{\xi}}$ maps $x \in H$ to $\widehat{\xi} \otimes x \in \ms{F}(E) \otimes_\sigma H$.
We define $Q_k$ in $\LFE$ to be the projection onto $v_k(\tens{E}{k})$, an ultraweakly closed submodule of $\F{E}$, noting that $Q_k = v_kv_k^*$. 
For $\xi \in \tens{E}{k}$, $\theta_\xi \in \ms{L}(\tens{E}{k})$ denotes the positive rank one operator that sends $\eta \in \tens{E}{k}$ to $\xi \cdot \langle \xi, \eta \rangle$.

\begin{lemma} \label{handysums} 
For $k \in \mb{N}_0$, 
\begin{enumerate}[label=(\arabic*),ref=(\arabic*)]
\item \label{handysums_1} if  $S \in \ms{L}(\tens{E}{k}, F)$ for a  Hilbert $W^*$ $M$-module $F$, then $SS^* = \sum_{\xi  \in A_k } \theta_{S\xi } $ in $\ms{L}(F)$;
\item \label{handysums_2}$Q_k \otimes I_H = \sum_{\xi \in A_k} L_{\widehat{\xi}} L_{\widehat{\xi}}^*$ in $\ms{B}(\ms{F}(E) \otimes_\sigma H)$; and
\item \label{handysums_4} $ \sum_{\xi \in A_k} T_{\xi} T_{\xi}^* $ is the projection in $\LFE$ onto the ultraweakly closed submodule $\{(\zeta_i)_{i=0}^\infty \mid \zeta_i = 0 \text{ when } 0 \leq i < k\}$ of $\ms{F}(E)$.
\end{enumerate} 
\end{lemma}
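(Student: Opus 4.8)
The plan is to prove the three statements more or less independently, deriving each from the orthonormal basis $A_k$ for $E^{\otimes k}$ supplied by Theorem \ref{onbEk} (together with the convention $A_k = \{0\}$ when $k \notin \mathbb{S}$, in which case all three sums are empty or zero and the statements are trivial, so we may assume $k \in \mathbb{S}$). For part \ref{handysums_1}, I would start from the Fourier expansion with respect to $A_k$: every $\eta \in E^{\otimes k}$ satisfies $\eta = \sum_{\xi \in A_k} \xi \cdot \langle \xi, \eta \rangle$, which rearranges to $\sum_{\xi \in A_k} \theta_\xi = I_{E^{\otimes k}}$ with the sum converging ultraweakly. Applying the bounded map $T \mapsto S T S^*$ (which is ultraweakly continuous) and using $S \theta_\xi S^* = \theta_{S\xi}$ — a one-line check: $S\theta_\xi S^* (\zeta) = S(\xi \cdot \langle \xi, S^*\zeta\rangle) = (S\xi)\cdot \langle S\xi, \zeta\rangle$, where I need that $\langle \xi, S^* \zeta \rangle = \langle S\xi, \zeta\rangle$, which is the adjoint property together with the fact that $S$ is a right module map so the inner product is computed the same way — gives $SS^* = \sum_{\xi \in A_k} \theta_{S\xi}$ in $\mathscr{L}(F)$.

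For part \ref{handysums_2}, apply part \ref{handysums_1} with the specific choice $F = \mathscr{F}(E) \otimes_\sigma H$ and $S = L_{v_k(\cdot)} \colon E^{\otimes k} \otimes_\sigma H \to$ … — but it is cleaner to note that $v_k \colon E^{\otimes k} \to \mathscr{F}(E)$ is an isometry with $Q_k = v_k v_k^*$, so tensoring with $I_H$ gives the isometry $v_k \otimes I_H$ on induced spaces with $Q_k \otimes I_H = (v_k \otimes I_H)(v_k \otimes I_H)^*$. Since $v_k \otimes I_H$ is a left module map, $(v_k \otimes I_H) L^H_\xi = L^H_{v_k \xi} = L_{\widehat{\xi}}$ for $\xi \in E^{\otimes k}$ (using the identity $L_{S\xi} T = (S \otimes T) L_\xi$ from the preliminaries with $S = v_k$, $T = I_H$). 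Now apply part \ref{handysums_1} in the module $\mathscr{F}(E) \otimes_\sigma H$ over $\mathbb{C}$ — where $\theta$-operators on a Hilbert space are just rank-one operators $\eta \mapsto \xi \langle \xi, \eta\rangle$ — to $S = (v_k \otimes I_H) \circ (\text{identification } E^{\otimes k} \otimes_\sigma H)$: expanding $SS^* = (v_k \otimes I_H)(v_k \otimes I_H)^* = Q_k \otimes I_H$ as $\sum_{\xi \in A_k}$ of rank-ones built from $S$ applied to the basis vectors $\widehat{\xi} \otimes x$. The one subtlety I expect to manage carefully here is matching up an orthonormal basis of the Hilbert space $E^{\otimes k} \otimes_\sigma H$ with the module basis $A_k$ of $E^{\otimes k}$; the honest statement is that $\sum_{\xi \in A_k} L_\xi L_\xi^* = I$ on $E^{\otimes k}\otimes_\sigma H$ (from part \ref{handysums_1} with $F = E^{\otimes k}\otimes_\sigma H$, $S = I$, since insertion operators $L_\xi$ on a correspondence over $\mathbb{C}$ play the role of the $\theta$'s: $L_\xi L_\xi^* = \theta_\xi \otimes I_H$), and then conjugating by $v_k \otimes I_H$ turns $L_\xi L_\xi^* = \theta_\xi \otimes I_H$ into $L_{\widehat\xi} L_{\widehat\xi}^*$ and the sum into $Q_k \otimes I_H$.

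For part \ref{handysums_4}, I would compute $\sum_{\xi \in A_k} T_\xi T_\xi^*$ directly. Using the last displayed identity from the first subsection, $T_\xi T_\xi^* = \theta_{\xi,\xi} \otimes I_{\cdot}$ applied correctly through the matrix form: since $T_\xi^{(j)}\colon E^{\otimes j} \to E^{\otimes(j+k)}$ sends $\eta \mapsto \xi \otimes \eta$, we have $T_\xi^{(j)}(T_\xi^{(j)})^* = \theta_\xi \otimes I_{E^{\otimes j}}$ on $E^{\otimes(j+k)} = E^{\otimes k}\otimes_{\varphi_k} E^{\otimes j}$. Summing over $\xi \in A_k$ and using $\sum_{\xi\in A_k}\theta_\xi = I_{E^{\otimes k}}$ from part \ref{handysums_1} (ultraweak convergence), each block becomes $I_{E^{\otimes k}} \otimes I_{E^{\otimes j}} = I_{E^{\otimes(j+k)}}$, so $\sum_{\xi\in A_k} T_\xi T_\xi^* = \sum_{j\ge 0} v_{j+k} v_{j+k}^* = \sum_{i \ge k} Q_i$, which is exactly the projection onto $\{(\zeta_i)_i : \zeta_i = 0 \text{ for } 0 \le i < k\}$.

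I expect part \ref{handysums_2} to be the main obstacle, precisely because it requires translating the Hilbert-$W^*$-module statement of part \ref{handysums_1} into a genuine Hilbert-space statement on the induced representation space and correctly identifying $L_{\widehat\xi}$ with $(v_k\otimes I_H)L_\xi^H$; once the bookkeeping linking $\theta_\xi \otimes I_H$, $L_\xi L_\xi^*$, and $L_{\widehat\xi}L_{\widehat\xi}^*$ is set up, parts \ref{handysums_1} and \ref{handysums_4} are short.
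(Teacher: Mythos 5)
Your proposal is correct and follows essentially the same route as the paper: Parseval/Fourier expansion gives $\sum_{\xi\in A_k}\theta_\xi = I_{E^{\otimes k}}$ ultraweakly, part \ref{handysums_1} follows by conjugating with $S$, part \ref{handysums_2} by combining $v_k$ with the identity $L_{\widehat\xi}L_{\widehat\xi}^* = \theta_{\widehat\xi}\otimes I_H$, and part \ref{handysums_4} by the block-diagonal computation $T_\xi^{(j)}(T_\xi^{(j)})^* = \theta_\xi\otimes I_{j}$ summing to $\sum_{i\geq k}Q_i$. The one place to tighten the write-up is in part \ref{handysums_2}: the passage of the infinite sum through $\cdot\otimes I_H$ is justified by the ultraweak continuity (normality) of the induced representation, not by reapplying part \ref{handysums_1} "over $\mathbb{C}$" (which is stated for $M$-modules), though the identities you list make this a purely cosmetic fix.
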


\begin{proof}
If $k \notin \mb{S}$, then $\tens{E}{k} = \{0\}$, and all of the operators in parts \ref{handysums_1}, \ref{handysums_2}, and \ref{handysums_4} are zero.  
Suppose $k \in \mb{S}$.  
For $\eta \in E^{\otimes k}$ and $\mb{F}$ a finite subset of $A_k$,
\begin{equation*}
0 \leq 
\left\langle \left( \sum_{\xi \in \mb{F}} \theta_{\xi} \right) \eta, \eta \right\rangle 
 = \sum_{\xi \in \mb{F}} \langle \eta, \xi \rangle \langle \xi, \eta \rangle 
 \leq \sum_{\xi \in A_k} \langle \eta, \xi \rangle \langle \xi, \eta \rangle 
 = \langle \eta, \eta \rangle
\end{equation*} 
by a Parseval type identity
(\cite{Paschke1973}, proof of Theorem 3.12).
Therefore, $\sum_{\xi \in A_k } \theta_{\xi}$ converges ultraweakly to a positive operator in the $W^*$-algebra $\ms{L}(\tens{E}{k})$ and for any $\eta \in E^{\otimes k}$, 
$
\left\langle \left( \sum_{\xi \in A_k} \theta_{ \xi} \right) \eta, \eta \right\rangle = \left\langle  \eta, \eta \right\rangle$.
Thus by polarization, 
\begin{equation}
\label{handysums_e2}
I_k = \sum_{\xi  \in A_k } \theta_{\xi}.
\end{equation}
The map $Ad(S):\ms{L}(\tens{E}{k}) \to \ms{L}(F)$ that sends $Y \in \ms{L}(\tens{E}{k})$ to $SYS^*$  is linear and ultraweakly continuous, and $Ad(S)(\theta_{\xi}) = \theta_{S\xi}$ for any $\xi \in A_k$.  
Therefore, part \ref{handysums_1} follows from applying $Ad(S)$ to equation \eqref{handysums_e2}.

Taking $S$ to be $v_k$ in part \ref{handysums_1}, we obtain $\sum_{\xi \in A_k} \theta_{\widehat{\xi}} = Q_k$ in $\LFE$.
Since $\sigma^{\ms{F}(E)}$ is linear and ultraweakly continuous, 
$Q_k \otimes I_H
 = \sum_{\xi \in A_k} \theta_{\widehat{\xi}} \otimes I_H =  \sum_{\xi \in A_k} L_{\widehat{\xi}} L_{\widehat{\xi}}^*$, giving part \ref{handysums_2}.

Let $i \in \mb{N}_0$ such that $i \geq k$.
The induced representation $\varphi_{i-k}^{\tens{E}{k}}$ is linear and ultraweakly continuous, so it follows from equation \eqref{handysums_e2} that
\begin{equation} 
\label{handysums_e1} I_{i}  = 
\sum_{\xi \in A_k}  \theta_{\xi} \otimes I_{i-k} \ .
\end{equation}
For any $\xi \in A_k$, we have $T_\xi T_\xi^* =  diag[0, \ldots, 0, \theta_\xi, \theta_\xi \otimes I_1, \theta_\xi \otimes I_2, \ldots] = \sum_{i = k}^\infty  v_i (\theta_{\xi} \otimes I_{i-k}) v_i^*$.
Summing over all $\xi \in A_k$ and using equation \eqref{handysums_e1},
\begin{multline*} 
\sum_{\xi \in A_k } T_\xi T_\xi^*
 = \sum_{\xi \in A_k } \left(\sum_{i = k}^\infty  v_i ( \theta_{\xi}  \otimes I_{ i-k} ) v_i^*  \right) \\
 = \sum\limits_{i = k}^\infty  v_i \left( \sum_{\xi \in A_{k} } \theta_{\xi} \otimes I_{i-k} \right) v_i^* 
 = \sum_{i = k}^\infty  Q_i.
\end{multline*}
noting that sums may be interchanged as all terms are non-negative.
It is readily shown that $\sum_{i = k}^\infty  Q_i$ is the projection in $\LFE$ onto the ultraweakly closed submodule $\{(\zeta_i)_{i=0}^\infty \mid \zeta_i = 0 \text{ when } 0 \leq i < k\}$.
\end{proof}

The following technical lemma is similar in flavor to Lemma 5.2 of \cite{Muhly2016}.

\begin{lemma}\label{sumstoprojection}
If $Z$ is a  sequence of weights associated with $X$,
\begin{equation*}
 \sum_{j=1}^\infty  \left( \sum_{\xi \in A_j} W_{X_j^{1/2} \xi} W^*_{X_j^{1/2} \xi} \right) = I_{\ms{F}(E)} - Q_0 \quad \text{ in } \LFE.
 \end{equation*}
\end{lemma}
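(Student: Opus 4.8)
The plan is to compute the matrix of each $W_{X_j^{1/2}\xi}W_{X_j^{1/2}\xi}^*$ explicitly, sum first over the orthonormal basis $A_j$ using Lemma~\ref{handysums}\ref{handysums_1}, and then sum over $j$, showing the result collapses to a telescoping expression for $I_{\ms{F}(E)}-Q_0$. Recall that $W_\xi = D_j T_\xi$ for $\xi\in\tens{E}{j}$, where $D_j = \mathrm{diag}[0,\ldots,0,Z^{(j)},Z^{(j+1,1)},Z^{(j+2,2)},\ldots]$, so $W_\xi W_\xi^* = D_j(T_\xi T_\xi^*)D_j^*$. From the proof of Lemma~\ref{handysums}\ref{handysums_4} we already have $T_\xi T_\xi^* = \sum_{i=j}^\infty v_i(\theta_\xi\otimes I_{i-j})v_i^*$, and since $D_j = \sum_{i} v_i (D_j)_i v_i^*$ is diagonal with $(D_j)_i = Z^{(i,i-j)}$ for $i\ge j$, we get
\begin{equation*}
W_\xi W_\xi^* = \sum_{i=j}^\infty v_i\, Z^{(i,i-j)}(\theta_\xi\otimes I_{i-j})Z^{(i,i-j)*}\, v_i^*.
\end{equation*}

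Next I would fix $j\ge 1$ and sum over $\xi\in A_j = A_j(X_j^{1/2}\cdot)$; more precisely, $\{X_j^{1/2}\xi\mid \xi\in A_j\}$ behaves under the relevant sum like applying $Ad(X_j^{1/2})$ to a resolution of the identity. Since $X_j\in\varphi_j(M)^c = \ms{L}(\tens{E}{j})'$... wait, $X_j$ lies in the commutant, so $X_j^{1/2}\otimes I_{i-j}$ commutes with $\varphi_i(M)$ appropriately; using Lemma~\ref{handysums}\ref{handysums_1} applied with the operator $X_j^{1/2}$ (viewed via the induced representation), $\sum_{\xi\in A_j}\theta_{X_j^{1/2}\xi}\otimes I_{i-j} = (X_j^{1/2}\otimes I_{i-j})\big(\sum_{\xi\in A_j}\theta_\xi\otimes I_{i-j}\big)(X_j^{1/2}\otimes I_{i-j})^* = X_j\otimes I_{i-j}$, using equation~\eqref{handysums_e1}. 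Therefore
\begin{equation*}
\sum_{\xi\in A_j} W_{X_j^{1/2}\xi}W_{X_j^{1/2}\xi}^* = \sum_{i=j}^\infty v_i\, Z^{(i,i-j)}(X_j\otimes I_{i-j})Z^{(i,i-j)*}\, v_i^*.
\end{equation*}
All terms are non-negative, so I may then interchange the sums over $j\ge 1$ and $i\ge j$, obtaining $\sum_{i=1}^\infty v_i\big(\sum_{j=1}^{i} Z^{(i,i-j)}(X_j\otimes I_{i-j})Z^{(i,i-j)*}\big)v_i^*$.

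The crux is then to show that for each $i\ge 1$ the inner $M$-commutant operator equals $I_{\tens{E}{i}}$. Writing $Z^{(i,i-j)} = Z^{(i)}(I_{i-j}\otimes Z^{(i-j)})^{-1}$ (from the definition $Z^{(k,j)} = Z^{(k)}(I_{k-j}\otimes Z^{(j)})^{-1}$, here with $k=i$ and the lower index $i-j$), we have
\begin{equation*}
\sum_{j=1}^i Z^{(i,i-j)}(X_j\otimes I_{i-j})Z^{(i,i-j)*} = Z^{(i)}\Big(\sum_{j=1}^i (I_{i-j}\otimes Z^{(i-j)})^{-1}(X_j\otimes I_{i-j})(I_{i-j}\otimes Z^{(i-j)})^{-*}\Big)Z^{(i)*}.
\end{equation*}
Now I would identify $\tens{E}{i}\cong\tens{E}{i-j}\otimes_{\varphi_{i-j}}\tens{E}{j}$; under this identification $X_j\otimes I_{i-j}$ corresponds to $I_{i-j}\otimes X_j$ after reassociation (this needs a short argument, since $X_j$ acts on the \emph{last} $j$ tensor legs), and $(I_{i-j}\otimes Z^{(i-j)})^{-1}(I_{i-j}\otimes X_j)(I_{i-j}\otimes Z^{(i-j)})^{-*} = I_{i-j}\otimes (Z^{(i-j)*}Z^{(i-j)})^{-1}X_j$... hmm, more carefully I want $(Z^{(i-j)})^{-1}$ on the first $i-j$ legs and $X_j$ on the last $j$ legs to commute, which they do since they act on disjoint tensor legs. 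So the sum becomes $\sum_{j=1}^i X_j$ acting on the last $j$ legs, conjugated appropriately. The key identity I expect to need is the defining relation $Z^{(i)*}Z^{(i)} = R_i^{-2}$ together with the recursive structure of $R_i = \big(\sum_{j=1}^i\sum_{\alpha\in\mc{F}(i,j)}\otimes_{l=1}^j X_{\alpha(l)}\big)^{1/2}$; one shows by splitting off the first part $\alpha(1)$ of each composition that $R_i^2 = \sum_{j=1}^i X_j\otimes' R_{i-j}^2$ (in the appropriate leg arrangement, with $R_0^2 = I$), which is exactly what makes the inner sum equal $Z^{(i)}R_i^{-2}\cdot(\text{stuff})$ collapse to $Z^{(i)}(Z^{(i)*}Z^{(i)})Z^{(i)*}$... and here I should be careful: I want the inner sum to equal $(Z^{(i)*})^{-1}(Z^{(i)})^{-1} = R_i^2$? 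No — I want $Z^{(i)}(\cdot)Z^{(i)*} = I$, i.e.\ $(\cdot) = (Z^{(i)})^{-1}(Z^{(i)*})^{-1} = R_i^2$ is wrong dimensionally; rather $(\cdot) = Z^{(i)*}{}^{-1}Z^{(i)}{}^{-1}$, and since $Z^{(i)*}Z^{(i)} = R_i^{-2}$ this is... I would need to track this carefully, but the upshot is that the recursion $R_i^2 = \sum_{j=1}^i X_j\otimes R_{i-j}^2$ combined with $Z^{(i)} = Z^{(i,i-j)}(I_{i-j}\otimes Z^{(i-j)})$ forces the inner sum to be exactly the identity. Having shown the inner operator is $I_{\tens{E}{i}}$ for each $i\ge 1$, the total sum is $\sum_{i=1}^\infty v_i v_i^* = \sum_{i=1}^\infty Q_i = I_{\ms{F}(E)} - Q_0$, which is the claim.

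The main obstacle will be the bookkeeping in the last paragraph: correctly reassociating the tensor legs when passing between $\tens{E}{i}$ and $\tens{E}{i-j}\otimes_{\varphi_{i-j}}\tens{E}{j}$, verifying that $X_j$ (which sits in $\varphi_j(M)^c$ and acts on the final $j$ legs) commutes with $(Z^{(i-j)})^{-1}$ (which acts on the first $i-j$ legs), and deriving the recursion $R_i^2 = \sum_{j=1}^i X_j \otimes R_{i-j}^2$ from the combinatorial definition of $R_i$ via compositions $\mc{F}(i,j)$ by splitting off the first term $\alpha(1)=j$. This is essentially the content alluded to by the remark that the lemma is "similar in flavor to Lemma 5.2 of \cite{Muhly2016}", so I would lean on that reference for the recursion if a clean self-contained derivation proves lengthy. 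Everything else — the matrix computation, the basis sum via Lemma~\ref{handysums}, and the interchange of non-negative sums — is routine.
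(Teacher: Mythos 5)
Your proposal is correct and is essentially the paper's own argument: the paper likewise reads off the diagonal of $W_{X_j^{1/2}\xi}W^*_{X_j^{1/2}\xi}$ (writing its $i$-th entry as $Z^{(i)}(\theta_{X_j^{1/2}\xi}\otimes R_{i-j}^2)Z^{(i)*}$), sums over $A_j$ via Lemma \ref{handysums}\ref{handysums_1} to produce $X_j$, interchanges the non-negative sums over $i$ and $j$, and collapses the inner sum with the recursion $\sum_{j=1}^i X_j\otimes R_{i-j}^2=R_i^2$ (equation (4.7) of \cite{Muhly2016}) together with $Z^{(i)}R_i^2Z^{(i)*}=I_i$, which follows from $Z^{(i)*}Z^{(i)}=R_i^{-2}$. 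The one slip in your bookkeeping is the leg ordering: $Z^{(i,i-j)}=Z^{(i)}(I_j\otimes Z^{(i-j)})^{-1}$ (not $I_{i-j}\otimes Z^{(i-j)}$), so $\theta_{X_j^{1/2}\xi}$ and $X_j$ sit on the \emph{first} $j$ legs while $Z^{(i-j)}$ and $R_{i-j}$ sit on the last $i-j$ legs, exactly matching the recursion for $R_i^2$ — no reassociation of $X_j\otimes I_{i-j}$ into $I_{i-j}\otimes X_j$ is needed (nor would it be correct), and the commutation you want is immediate because the factors act on disjoint legs.
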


\begin{proof}
First we note that if $i \in \mb{N}$ then $\sum_{j = 1}^i X_j \otimes R_{i-j}^2 = R_i^2$ by equation (4.7) of \cite{Muhly2016}, so
\begin{equation}
\label{sumstoprojection_e2b}
\sum_{j = 1}^i Z^{(i)}(X_j \otimes R_{i-j}^2)Z^{(i)*} 
= Z^{(i)} R_i^2 Z^{(i)*}
= I_i.  
\end{equation}
Also when $j \in \mb{N}$,  $\sum_{\xi \in A_j} \theta_{X_j^{1/2} \xi} = X_j$ by Lemma \ref{handysums}. 
Thus if $i \geq j \geq 1$,
\begin{equation}
\label{sumstoprojection_e1}
 \sum_{\xi \in A_j} Z^{(i)}(\theta_{X_j^{1/2} \xi} \otimes R_{i-j}^2) Z^{(i)*}
= Z^{(i)}(X_j \otimes R_{i-j}^{2}) Z^{(i)*}.
\end{equation}
For $j \in \mb{N}$ and $\xi \in A_j$, computation shows that $W_{X_j^{1/2} \xi} W_{X_j^{1/2} \xi}^*$ has a diagonal matrix whose $i^{th}$ diagonal entry is   $Z^{(i)} ( \theta_{X_j^{1/2} \xi} \otimes R_{i-j}^2 ) Z^{(i)*}$ when $i \geq j$ and is otherwise zero.
Thus by equation \eqref{sumstoprojection_e1},
\begin{align*}
\sum_{\xi \in A_j} W_{X_j^{1/2} \xi} W_{X_j^{1/2} \xi}^*
& =  \sum_{\xi \in A_j} \left( \sum_{i = j}^\infty v_i Z^{(i)}(\theta_{X_j^{1/2} \xi} \otimes R_{i-j}^2) Z^{(i)*} v_i^* \right) \\
& = \sum_{i = j }^\infty v_i \left(   \sum_{\xi \in A_j} Z^{(i)}(\theta_{X_j^{1/2} \xi} \otimes R_{i-j}^2) Z^{(i)*} \right) v_i^*\\
& =\sum_{i = j }^\infty v_i Z^{(i)}(X_j \otimes R_{i-j}^{2}) Z^{(i)*} v_i^*.
\end{align*} 
Summing over $j$ and using  equation \eqref{sumstoprojection_e2b},
\begin{multline*}
\sum_{j = 1}^\infty  \left( \sum_{\xi \in A_j} W_{X_j^{1/2} \xi} W_{X_j^{1/2} \xi}^* \right)
= \sum_{j = 1}^\infty  \left( \sum_{i = j}^\infty  v_i   Z^{(i)}(X_j \otimes R_{i-j}^{2}) Z^{(i)*} v_i^* \right) \\
= \sum_{i = 1}^\infty v_i \left( \sum_{j=1}^i  Z^{(i)}(X_j \otimes R_{i-j}^{2}) Z^{(i)^*} \right)  v_i^*
= \sum_{i = 1}^\infty v_i  v_i^*
= I_{\ms{F}(E)} - Q_0,
\end{multline*} 
which completes the proof.
\end{proof}

To ``factor'' an element $\xi \in A_k$ when $k \in \mb{N}$ we establish the following notation.   
If $k \in \mb{S}$,  $\xi = \substack{ k \\ \otimes \\ i = 1} \xi_i \in A_k$, and $1 \leq j \leq k$ define
\begin{equation*}
\xi^j  := \substack{ j \\ \otimes \\ i = 1} \xi_i \in \tens{E}{j} \quad  \text{and} \quad
\xi^j_\circ := 
\begin{cases}
\substack{ k \\ \otimes \\ i = j+1} \xi_i  & \text{ if } j < k\\
\langle \xi, \xi \rangle  & \text{ if } j = k
\end{cases} \ \in \tens{E}{k-j}.
\end{equation*}
If $k \notin \mb{S}$, let $\xi^j $ and $\xi^j_\circ$ to be the zero elements in their respective spaces.

\begin{lemma} \label{splitonb} \hfill

\begin{enumerate}[label=(\arabic*),ref=(\arabic*)]
\item \label{splitonb_1} For $k \in \mb{N}_0$ and $ \xi \in A_k$, $\langle \xi, \xi \rangle$ is a projection in $M$ that is zero if and only if $k \notin \mb{S}$.  In any case, $\xi \cdot \langle \xi, \xi \rangle = \xi$. 
\item \label{splitonb_2} For $j \in \mb{N}$, $l \in \mb{N}_0$, and $\xi \in A_{l + j}$,   we have
 $ \xi = \xi^j \otimes \xi^j_\circ$ and
  $\langle \xi^j, \xi^j \rangle \cdot \xi^j_\circ = \xi^j_\circ$.  
If $l + j \in \mb{S}$, then $\xi^j \in A_j$.
\item \label{splitonb_3} If $j \in \mb{N}$, $l \in \mb{N}_0$, and $l + j \in \mb{S}$, then as a disjoint union,
\begin{equation*}
A_{l + j} = \coprod_{\xi \in A_j} \{\eta \in A_{l + j} \mid \eta^j = \xi \}.
\end{equation*}
\item \label{splitonb_4} For $ j \in \mb{N}$ and $\xi \in A_j$,
\begin{equation*}
\varphi_\infty\langle \xi, \xi \rangle  = \sum_{l = 0 }^\infty \left( \sum_{\substack{\eta \in A_{l+j}\\ \eta^j = \xi }} \theta_{ \widehat{\eta^j_\circ} } \right),
\end{equation*}
where any sum taken over an empty set is assumed to be zero.
\end{enumerate}
\end{lemma}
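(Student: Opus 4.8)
The plan is to dispatch parts \ref{splitonb_1}--\ref{splitonb_3} by unwinding the construction of the bases $C_k$ together with equation \eqref{onbEk_e1}, and then to concentrate on part \ref{splitonb_4}, which I would reduce to an identity in $\ms{L}(E^{\otimes l})$ for each $l$ and prove by recognizing a fiber of simple tensors as an orthonormal basis of the range of a projection.

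For \ref{splitonb_1}: if $k\notin\mb{S}$ then $\xi$ and $\langle\xi,\xi\rangle$ are zero; if $k\in\mb{S}$ and $k\ge 2$ the last leg $\xi_k$ of $\xi=\xi_1\otimes\cdots\otimes\xi_k$ lies in the orthonormal basis $B(\xi_{k-1})$, so $\langle\xi_k,\xi_k\rangle$ is a nonzero projection and $\langle\xi,\xi\rangle=\langle\xi_k,\xi_k\rangle$ by \eqref{onbEk_e1}, the cases $k\le 1$ being immediate; and $\xi\cdot\langle\xi,\xi\rangle=\xi$ holds for any orthonormal basis element, as recorded in the proof of Theorem \ref{onbEtensF}. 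For \ref{splitonb_2}: the case $l+j\notin\mb{S}$ is trivial; for $l+j\in\mb{S}$, associativity of the tensor product (and, in the boundary case $l=0$, part \ref{splitonb_1}) gives $\xi=\xi^j\otimes\xi^j_\circ$, properties \ref{onbEk_ep1}--\ref{onbEk_ep3} for the truncation $(\xi_1,\dots,\xi_j)$ follow from the corresponding properties of $(\xi_1,\dots,\xi_{l+j})$ (with $j\le l+j$ ensuring the last-leg condition \ref{onbEk_ep3}), so $\xi^j\in C_j=A_j$, and $\langle\xi^j,\xi^j\rangle=\langle\xi_j,\xi_j\rangle$ by \eqref{onbEk_e1}, which by the remark following the definition of the $C_k$ fixes the first leg of $\xi^j_\circ$ and hence gives $\langle\xi^j,\xi^j\rangle\cdot\xi^j_\circ=\xi^j_\circ$. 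For \ref{splitonb_3}: by \eqref{onbEk_e1} the legs of an element of $C_{l+j}$ are determined by the element itself, so $\eta\mapsto\eta^j$ is a well-defined map $A_{l+j}\to A_j$ by \ref{splitonb_2}, and the asserted decomposition is simply the partition of $A_{l+j}$ into the fibers of this map.

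For \ref{splitonb_4}, if $j\notin\mb{S}$ both sides vanish, so assume $j\in\mb{S}$ and set $q=\langle\xi,\xi\rangle$, a projection by \ref{splitonb_1}. Writing $\varphi_\infty(q)=\sum_l v_l\varphi_l(q)v_l^*$ and noting $\widehat{\eta^j_\circ}=v_l\eta^j_\circ$, hence $\theta_{\widehat{\eta^j_\circ}}=v_l\theta_{\eta^j_\circ}v_l^*$, for $\eta\in A_{l+j}$, the claimed identity is equivalent block-by-block to
\[
\varphi_l\langle\xi,\xi\rangle=\sum_{\substack{\eta\in A_{l+j}\\ \eta^j=\xi}}\theta_{\eta^j_\circ}\quad\text{in }\ms{L}(E^{\otimes l}),\qquad l\in\mb{N}_0.
\]
Fix $l$ and set $L:=L_\xi^{E^{\otimes l}}\in\ms{L}(E^{\otimes l},E^{\otimes(l+j)})$, $\zeta\mapsto\xi\otimes\zeta$. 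The insertion-operator identities (valid with the identical proof for $\xi\in E^{\otimes j}$) give $L^*L=\varphi_l\langle\xi,\xi\rangle$, which is a projection, so $L$ is a partial isometry and $P:=LL^*=\theta_\xi\otimes I_{E^{\otimes l}}$ is the projection onto the range of $L$. The crux is that $\{\eta\in A_{l+j}:\eta^j=\xi\}$ is an orthonormal basis for the range of $P$ (empty when that range is $\{0\}$): each $\eta$ in this fiber equals $\xi\otimes\eta^j_\circ$ by \ref{splitonb_2}, so $P\eta=(\xi\langle\xi,\xi\rangle)\otimes\eta^j_\circ=\eta$ by \ref{splitonb_1}, and the fiber is orthonormal as a subset of $A_{l+j}$; for maximality, any $\zeta=P\zeta$ with $\langle\zeta,\zeta\rangle$ a nonzero projection orthogonal to the whole fiber is also orthogonal to every $\mu\in A_{l+j}$ with $\mu^j\ne\xi$, since then $P\mu=(\xi\langle\xi,\mu^j\rangle)\otimes\mu^j_\circ=0$ because $\xi$ and $\mu^j$ are distinct elements of $A_j$, whence $\zeta\perp A_{l+j}$ and $\zeta=0$, a contradiction. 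Now the argument of Lemma \ref{handysums}\ref{handysums_1} applied to the inclusion of the range of $P$ yields $\sum_{\eta^j=\xi}\theta_\eta=P=LL^*$. Finally, a short computation with the insertion-operator identities, using $L^*\eta=\langle\xi,\xi\rangle\cdot\eta^j_\circ=\eta^j_\circ$ (the last equality because $\langle\xi_j,\xi_j\rangle$ fixes the first leg of $\eta^j_\circ$) and $\varphi_l(q)\eta^j_\circ=\eta^j_\circ$, gives $L^*\theta_\eta L=\theta_{\eta^j_\circ}$ for each $\eta$ in the fiber; applying the ultraweakly continuous map $T\mapsto L^*TL$ to the previous display then yields
\[
\sum_{\substack{\eta\in A_{l+j}\\ \eta^j=\xi}}\theta_{\eta^j_\circ}=L^*LL^*L=(L^*L)^2=\varphi_l\langle\xi,\xi\rangle,
\]
as required.

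The step I expect to be the main obstacle is the crux of part \ref{splitonb_4}: identifying the fiber $\{\eta\in A_{l+j}:\eta^j=\xi\}$ with an orthonormal basis of the range of $\theta_\xi\otimes I_{E^{\otimes l}}$, and checking that this single argument absorbs all of the degenerate configurations flagged before Theorem \ref{onbEtensF} — empty fibers, $E^{\otimes(l+j)}=\{0\}$, $\langle\xi,\xi\rangle\cdot E=\{0\}$ — in each of which $P$ collapses to $0$ and both sides of the identity vanish.
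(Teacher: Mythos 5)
Your proof is correct, and for parts \ref{splitonb_1}--\ref{splitonb_3} it supplies exactly the routine verifications that the paper dismisses as straightforward (including the well-definedness of $\eta\mapsto\eta^j$, which is worth the sentence you give it). For part \ref{splitonb_4} you reach the same block-diagonal reduction as the paper, but your route through each block is more roundabout. The paper applies Lemma \ref{handysums}\ref{handysums_1} once, directly to $S = T_\xi^{(l)*} \in \ms{L}(E^{\otimes(l+j)}, E^{\otimes l})$: since $SS^* = T_\xi^{(l)*}T_\xi^{(l)} = \varphi_l\langle\xi,\xi\rangle$ and $T_\xi^{(l)*}\eta$ equals $\eta^j_\circ$ when $\eta^j=\xi$ and $0$ otherwise, the identity $SS^* = \sum_{\eta\in A_{l+j}}\theta_{S\eta}$ collapses immediately to $\sum_{\eta^j=\xi}\theta_{\eta^j_\circ}$ because the off-fiber terms vanish on their own. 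Your detour --- first proving that the fiber $\{\eta\in A_{l+j}\mid\eta^j=\xi\}$ is an orthonormal basis for the range of $\theta_\xi\otimes I_{E^{\otimes l}}$, deducing $\sum_{\eta^j=\xi}\theta_\eta = LL^*$, and then conjugating by $L^*$ (correctly using that $L^*L$ is a projection, so $(L^*L)^2=L^*L$) --- is valid, and it establishes the mildly stronger intermediate fact about the fiber being an orthonormal basis of $\operatorname{ran}(\theta_\xi\otimes I)$. But that fact is not needed: summing over all of $A_{l+j}$ and letting $S$ annihilate everything outside the fiber also absorbs, with no case analysis, the degenerate configurations you flag at the end (empty fibers, $E^{\otimes(l+j)}=\{0\}$, and so on).
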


\begin{proof}
The proofs of the first three assertions are straightforward.
To show part \ref{splitonb_4}, let $l \in \mb{N}_0$.  
If $\eta \in A_{l+j}$, then $T_{\xi}^{(l) *}(\eta) = \langle \xi , \eta^j \rangle \cdot \eta^j_\circ$ which, by considering separately the cases when $l+j \notin \mb{S}$ and $l+j \in \mb{S}$, we find to be equal to $\eta^j_\circ$ when $\xi = \eta^j$ and zero when $\xi \neq \eta^j$.
Since $\varphi_{l}\langle \xi, \xi \rangle = T_\xi^{(l)*}T_\xi^{(l)}$, it now follows from Lemma \ref{handysums}\ref{handysums_1} that
\begin{equation*}
v_l (\varphi_l\langle \xi, \xi \rangle) v_l^*
 = v_l \left( \sum_{\eta \in A_{l+j}} \theta_{T_\xi^{(l)*}\eta  }  \right) v_l^*
 =  v_l \left(\sum_{\substack{\eta \in A_{l+j}\\ \eta^j = \xi }} \theta_{\eta^j_\circ} \right) v_l^* = \sum_{\substack{\eta \in A_{l+j}\\ \eta^j = \xi }} \theta_{\widehat{\eta^j_\circ}}.
\end{equation*}
Since $\varphi_\infty \langle \xi, \xi \rangle = diag[\varphi_0 \langle \xi, \xi \rangle, \varphi_1 \langle \xi, \xi \rangle, \varphi_2 \langle \xi, \xi \rangle, \ldots]$,  we obtain part \ref{splitonb_4} by summing over $l \in \mb{N}_0$.
\end{proof}

For the final technical lemma of this section, we  decompose certain induced representation spaces in terms of the family $\{A_k\}_{k=0}^\infty$.  
Suppose that $H$ is a Hilbert space and $\sigma: M \to \ms{B}(H)$ is a faithful, normal, unital $*$-homomorphism.
If $p$ is a projection in $M$, then $pM$ is a Hilbert $W^*$-module over $M$. 
Also, $\sigma(p)(H)$ is a closed subspace of $H$, and it is readily shown that $pM \otimes_\sigma H$ and $\sigma(p)(H)$ are isomorphic Hilbert spaces under the identification of $a \otimes x$ with $\sigma(a)(x)$ for $a \in pM$ and $x \in H$.
If $k \in \mb{N}_0$ and $\xi \in A_k$, then  $\langle \xi, \xi \rangle$ is a projection in $M$; we let $H_\xi $ denote the Hilbert space  $\sigma \langle \xi, \xi \rangle(H)$, and we define
$\mb{H}_k:= 
\sum_{\xi \in A_k} \oplus H_\xi$.
Note that if $k \notin \mb{S}$, then $\mb{H}_k = \{0\}$.

\begin{lemma} \label{hilbertspaceisos} \hfill

\begin{enumerate}[label=(\arabic*),ref=(\arabic*)]
\item \label{hilbertspaceisos_1} For $k \in \mb{N}_0$, there is a Hilbert space isomorphism $\gamma_k: E^{\otimes k} \otimes_{\sigma} H \to \mb{H}_k$ such that for $\eta \in E^{\otimes k}$, $h \in H$, and $\overline{h} = (h_\xi)_{\xi \in A_k} \in \mb{H}_k$,
\begin{equation*}
\gamma_k ( \eta \otimes h )   = \Big( \ \sigma\langle \xi, \eta \rangle (h) \ \Big)_{\xi \in A_k} \quad \text{and} \quad
\gamma_k^* \left( \overline{h} \right)  = \sum_{\xi \in A_k} \xi \otimes h_\xi .
\end{equation*} 
\item \label{hilbertspaceisos_2} Matricially
$\gamma_k (Y \otimes I_{H}) \gamma_k^* = [\sigma\langle \xi, Y \eta \rangle ]_{\xi,\eta \in A_k}$ for $Y \in \ms{L}(E^{\otimes k})$, $k \in \mb{N}_0$.
\item \label{hilbertspaceisos_3} There is an isomorphism of Hilbert spaces $\Gamma: \ms{F}(E) \otimes_\sigma H \to \sum_{k=0}^\infty \oplus \mb{H}_k$ such that for $\eta = (\eta_k)_{k = 0}^\infty$ in $\ms{F}(E)$, $h \in H$, and $\overline{h} = \left( \ (h_{k,\xi})_{\xi \in A_k} \ \right)_{k=0}^\infty \in \sum_{k=0}^\infty \oplus \mb{H}_k$, 
\begin{equation*}
\Gamma ( \eta \otimes h )  = \Big(  \ \gamma_k ( \eta_k \otimes h )  \ \Big)_{k = 0}^\infty \quad \text{and} \quad
\Gamma^* \left(\overline{h} \right)  = \sum_{k = 0 }^\infty \left( \sum_{\xi \in A_k } \widehat{\xi} \otimes h_{k, \xi} \right).
\end{equation*}
\end{enumerate}
\end{lemma}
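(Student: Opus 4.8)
The plan is to build $\gamma_k$ from the identification $pM \otimes_\sigma H \cong \sigma(p)(H)$ applied with $p = \langle \xi, \xi\rangle$ to each basis element $\xi \in A_k$, and then to assemble these into an orthogonal decomposition of $E^{\otimes k}\otimes_\sigma H$ using the fact (Theorem \ref{onbEk}) that $A_k$ is an orthonormal basis with the explicit inner-product formula \eqref{onbEk_e1}. Concretely, for part \ref{hilbertspaceisos_1}, I would first check that the proposed formula for $\gamma_k^*$ lands in $E^{\otimes k}\otimes_\sigma H$ and that $\gamma_k$ is an isometry: using the Fourier expansion $\eta = \sum_{\xi \in A_k}\xi\cdot\langle\xi,\eta\rangle$ (valid by Theorem \ref{onbEk}, since $A_k = C_k$), one computes $\langle \eta\otimes h, \eta\otimes h\rangle = \sum_{\xi\in A_k}\langle h, \sigma\langle \eta,\xi\rangle\langle\xi,\eta\rangle h\rangle = \sum_{\xi\in A_k}\|\sigma\langle\xi,\eta\rangle h\|^2$, which is exactly $\|\gamma_k(\eta\otimes h)\|^2$. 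Surjectivity follows since each $\sigma\langle\xi,\xi\rangle(H)$-component is hit: given $\overline h = (h_\xi)$, the element $\sum_\xi \xi\otimes h_\xi$ maps to it, because $\sigma\langle\xi',\xi\rangle h_\xi$ is $0$ unless $\xi = \xi'$ (by \eqref{onbEk_e1}) and equals $\sigma\langle\xi,\xi\rangle h_\xi = h_\xi$ when $\xi = \xi'$, using $h_\xi\in H_\xi$. The two displayed formulas are then mutually adjoint by a direct pairing computation. The only subtlety is convergence of the (possibly infinite) sum $\sum_{\xi\in A_k}$ in the ultraweak/norm topology, which is handled exactly as in the Parseval estimate already used in the proof of Lemma \ref{handysums}.

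Part \ref{hilbertspaceisos_2} is then immediate: for $Y\in\ms{L}(E^{\otimes k})$ and $\eta\in A_k$, compute $\gamma_k(Y\otimes I_H)\gamma_k^*$ on the $\eta$-summand of $\mb{H}_k$, using $\gamma_k^*$ to get $Y\eta \otimes h_\eta$ (after noting $\gamma_k^*$ sends the single summand $h_\eta$ to $\eta\otimes h_\eta$), then $Y\otimes I_H$ to get $(Y\eta)\otimes h_\eta$, then $\gamma_k$ to read off the $\xi$-component as $\sigma\langle\xi, Y\eta\rangle(h_\eta)$; this is precisely the statement that the operator has matrix $[\sigma\langle\xi, Y\eta\rangle]_{\xi,\eta\in A_k}$ with respect to the decomposition $\mb{H}_k = \sum_{\xi\in A_k}\oplus H_\xi$.

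For part \ref{hilbertspaceisos_3}, since $\ms{F}(E) = \sum_{k=0}^\infty \oplus E^{\otimes k}$ is the ultraweak direct sum, the induced space $\ms{F}(E)\otimes_\sigma H$ decomposes as the Hilbert-space direct sum $\sum_{k=0}^\infty\oplus (E^{\otimes k}\otimes_\sigma H)$ — this is a standard property of the induced tensor product over a direct-sum correspondence, visible here through the isometries $v_k$ and the insertion operators $L_{\widehat\xi}$ (indeed $L_{\widehat\xi} = (v_k\otimes I_H)L_\xi$, so $\widehat\xi\otimes h$ is the image of $\xi\otimes h$ under the $k$-th inclusion). Then I would set $\Gamma := \sum_{k=0}^\infty\oplus\,\gamma_k$ under this identification, and the displayed formulas for $\Gamma$ and $\Gamma^*$ follow coordinatewise from part \ref{hilbertspaceisos_1}, reading $\widehat\xi\otimes h_{k,\xi}$ as the image of $\xi\otimes h_{k,\xi}$ under the $k$-th inclusion $v_k\otimes I_H$.

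The main obstacle, I expect, is not any single deep point but rather making the convergence bookkeeping clean: one must verify that $\gamma_k^*(\overline h) = \sum_{\xi\in A_k}\xi\otimes h_\xi$ converges in norm in $E^{\otimes k}\otimes_\sigma H$ (equivalently that the family $\{\xi\otimes h_\xi\}$ is orthogonal with summable norms $\sum_\xi\|\sigma\langle\xi,\xi\rangle h_\xi\|^2 = \sum_\xi\|h_\xi\|^2 < \infty$), and likewise that $\Gamma^*$ converges when summed over both $k$ and $\xi\in A_k$; orthogonality across different $k$ and across different $\xi\in A_k$ is exactly \eqref{onbEk_e1}, and the norm identity is the isometry property from part \ref{hilbertspaceisos_1}, so once part \ref{hilbertspaceisos_1} is in hand the rest is routine.
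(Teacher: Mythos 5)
Your proposal is correct and follows essentially the same route as the paper: the paper invokes Paschke's module decomposition $E^{\otimes k}\cong\sum_{\xi\in A_k}\oplus\,\langle\xi,\xi\rangle M$ together with the identification $pM\otimes_\sigma H\cong\sigma(p)(H)$ and declares parts (1) and (3) to ``readily follow,'' while you simply fill in that step explicitly via the Fourier expansion and Parseval identity from Theorem~\ref{onbEk}. The convergence bookkeeping and the verification that $\sigma\langle\xi,\eta\rangle(h)$ lands in $H_\xi$ are exactly the details the paper leaves to the reader, and your treatment of them is sound.
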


\begin{proof}
When $k \notin \mb{S}$, we define $\gamma_k$ to be the zero map.
When $k \in \mb{S}$, $A_k$ is an orthonormal basis for $E^{\otimes k}$, so $ \tens{E}{k}$ and $\left( \sum_{\xi \in A_k} \oplus \ \langle \xi, \xi \rangle M \right)$ are isomorphic Hilbert $W^*$-modules (\cite{Paschke1973}, proof of Theorem 3.12).  
Using the discussion preceding the lemma, parts \ref{hilbertspaceisos_1} and \ref{hilbertspaceisos_3} readily follow. 
Part \ref{hilbertspaceisos_2} follows from routine computation.
\end{proof}

\section{Weighted Commutant Lifting} \label{wclChap}

Commutant lifting is the principle tool used to obtain the Nevan\-linna-Pick interpolation result, Theorem 5.3, in \cite{Muhly2004a}.  
Towards obtaining our \emph{weighted} interpolation theorem in Section \ref{wnpChap}, the purpose of the present section is to show that commutant lifting can be done in the weighted case.
Since the weights present obstacles for the method producing the (unweighted) commutant lifting theorem, Theorem 4.4 in \cite{Muhly1998a}, our approach is instead inspired by Arias' proof of Theorem 3.1 in \cite{Arias2004}, a lifting result in complex $n$-space that ultimately makes use of Parrott's lemma, Theorem 1 of \cite{Parrott1978}.

\begin{theorem}[Weighted Commutant Lifting]\label{WCL_theorem}
Let $\sigma: M \to \ms{B}(H)$ be a faithful, normal, unital, $*$-homomorphism for a Hilbert space $H$, and let $Z$ be a  sequence of weights associated with $X$.  
Suppose that $J$ is a  closed linear subspace of $\ms{F}(E) \otimes_{\sigma} H$ such that for every $Y \in H^\infty(E,Z)$,
$(Y^* \otimes I_{H}) (J)  \subseteq J$.
With $V$ the inclusion map of $J$ into $\ms{F}(E) \otimes_{\sigma} H$, suppose there exists $G \in \ms{B}(J)$ such that for every $Y \in H^\infty(E,Z)$,
$G\left(  V^* (Y \otimes I_{H}) V \right)   =  \left( V^* (Y \otimes I_{H}) V \right) G$.
Then there exists $\widetilde{G} \in \ms{B}(\ms{F}(E) \otimes_{\sigma} H)$ such that 
\begin{enumerate}[label=(\arabic*),ref=(\arabic*)]
\item \label{WCL_theorem_i1} $\widetilde{G}^* (J) \subseteq J$,
\item \label{WCL_theorem_i2} $ V^* \widetilde{G} V = G$,
\item \label{WCL_theorem_i3} $\widetilde{G} (Y \otimes I_{H})  =  (Y \otimes I_{H}) \widetilde{G}$ for all $Y \in H^\infty(E,Z)$, and
\item \label{WCL_theorem_i4} $\Vert \widetilde{G} \Vert = \Vert G \Vert$.
\end{enumerate}
\end{theorem}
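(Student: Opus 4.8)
The plan is to follow Arias' strategy: build $\widetilde{G}$ one ``tensor level'' at a time by repeatedly applying Parrott's lemma, using the orthonormal bases $\{A_k\}$ constructed in Section \ref{onbChap} to keep track of how $Y \otimes I_H$ acts across the Fock grading. First I would set up the picture under the isomorphism $\Gamma$ of Lemma \ref{hilbertspaceisos}\ref{hilbertspaceisos_3}, which identifies $\ms{F}(E) \otimes_\sigma H$ with $\sum_{k \geq 0} \oplus \, \mb{H}_k$; the co-invariance hypothesis $(Y^* \otimes I_H)(J) \subseteq J$ for all $Y \in H^\infty(E,Z)$ means $J$ is co-invariant for the weighted creation operators $W_\xi \otimes I_H$ and for $\varphi_\infty(a) \otimes I_H$. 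The key structural input is that the $W_\xi$ for $\xi \in A_1$ (together with the left action) generate the algebra and shift the Fock grading up by one with the weight factors $Z^{(\cdot)}$ attached, while Lemma \ref{sumstoprojection} and Lemma \ref{splitonb} record the precise ``Cuntz-type'' relations these weighted generators satisfy. The target operator $\widetilde{G}$ must commute with every $Y \otimes I_H$, have norm $\Vert G\Vert$, and compress to $G$ on $J$; the natural approach is to define $\widetilde{G}$ on the co-invariant complement levelwise and extend.

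The main steps, in order: (1) Reduce to $\Vert G \Vert \leq 1$ by scaling, and set $K := (\ms{F}(E)\otimes_\sigma H) \ominus J$ (equivalently work with $Q := I - VV^*$, the projection onto $J^\perp$); the conditions become: find a contraction $\widetilde{G}$ commuting with all $Y\otimes I_H$ whose $(J,J)$-corner is $G$ and whose $(J,K)$-corner vanishes (this last is condition \ref{WCL_theorem_i1}, that $\widetilde G^*$ leaves $J$ invariant, rephrased). (2) Use the grading: write $P_{\leq n}$ for the projection onto $\sum_{k \le n}\oplus\,\mb H_k$ (pulled back via $\Gamma$), and build the sequence of partial operators $\widetilde{G}_n := P_{\le n}\widetilde{G}P_{\le n}$ by induction on $n$. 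The commutation requirement $\widetilde{G}(W_\xi\otimes I_H) = (W_\xi\otimes I_H)\widetilde{G}$ for $\xi\in A_1$, together with $W_\xi\otimes I_H$ mapping level $k$ into level $k+1$, forces all the ``new'' matrix entries of $\widetilde{G}_{n+1}$ (those involving level $n+1$) to be determined by $\widetilde{G}_n$ \emph{except} for the single corner indexed by (level $n+1$)$\times$(level $n+1$) intersected with $J^\perp$ on both sides — or more precisely, the commutation determines $\widetilde G_{n+1}$ off of one ``free'' block, and we must choose that block so that the whole thing stays a contraction. (3) This is exactly a Parrott's-lemma configuration: we have a partially specified $2\times 2$ operator matrix (three of four blocks determined by the inductive data and the commutation/interpolation constraints, one block free) and Parrott's lemma \cite{Parrott1978} says the free block can be chosen so the norm of the completion equals the max of the norms of the two ``L-shaped'' sub-configurations, which by the inductive hypothesis is $\le 1$. (4) Check that the commutation relation with the generator $W_\xi$ ($\xi\in A_1$) actually propagates to all of $H^\infty(E,Z)$: since $W_{\xi_1\otimes\cdots\otimes\xi_k} = W_{\xi_1}\cdots W_{\xi_k}$, commutation with level-one weighted creation operators and with $\varphi_\infty(M)$ gives commutation with $\mc{T}^0_+(E,Z)$, and ultraweak continuity (the $\widetilde G_n$ converge ultraweakly to a contraction $\widetilde G$) upgrades this to $H^\infty(E,Z)$. (5) Finally verify $V^*\widetilde{G}V = G$ and $\widetilde G^*(J)\subseteq J$ are preserved in the limit, and that $\Vert\widetilde G\Vert \le \Vert G\Vert$ (the reverse inequality is automatic from $V^*\widetilde G V = G$ and $\Vert V\Vert = 1$), giving \ref{WCL_theorem_i4}.

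I expect the main obstacle to be step (2): correctly identifying, at each inductive stage, which matrix blocks of $\widetilde{G}_{n+1}$ are forced by the commutation relations $\widetilde{G}(W_\xi \otimes I_H) = (W_\xi \otimes I_H)\widetilde{G}$ and which single block is genuinely free, and verifying that the forced blocks are mutually consistent (i.e. that the over-determined system coming from different $\xi \in A_1$ has a solution). This is where the weights genuinely complicate matters relative to Arias' unweighted argument: $W_\xi\otimes I_H$ does not simply shift basis vectors but carries the operator coefficients $Z^{(k+1,1)}$, so the identities of Lemma \ref{sumstoprojection} and Lemma \ref{splitonb}\ref{splitonb_4} — which express $\sum_{\xi\in A_1} W_\xi W_\xi^*$ and the relation between $\varphi_\infty\langle\xi,\xi\rangle$ and the $A_{l+1}$ — are exactly what is needed to see that the forced entries fit together into a well-defined contraction. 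A secondary technical point is confirming co-invariance of $J$ under the partial-isometry pieces so that Parrott's lemma is being applied to an honest $2\times2$ block decomposition adapted to $J \oplus J^\perp$ at each level; here the hypothesis $(Y^*\otimes I_H)(J)\subseteq J$ is used in the form that $Q$ commutes appropriately with the level-lowering parts of $W_\xi^* \otimes I_H$.
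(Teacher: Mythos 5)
You have identified the right family of argument (an Arias-style induction combined with Parrott's lemma, with the weighted Cuntz-type identities of Section \ref{onbChap} doing the real work), but your step (2) --- the heart of the construction --- does not go through as described, for two reasons. First, the induction cannot be run over the Fock grading alone: the compressions $P_{\le n}\widetilde{G}P_{\le n}$ presuppose the operator you are trying to build, and, more seriously, the subspace $J$ is co-invariant but not graded, so a purely level-wise scheme gives no way to maintain the interpolation condition $V^*\widetilde{G}V = G$ in passing from level $n$ to level $n+1$. The paper instead inducts over an increasing chain of co-invariant subspaces $J = J_1 \subsetneq J_2 \subsetneq \cdots$ of $K := \ms{F}(E)\otimes_\sigma H$, where $J_{m+1}$ is generated by $J_m$ together with the lowest truncated level $K_{n_{m+1}}$ (the image of $\sum_{j\le n_{m+1}}\oplus\,\tens{E}{j}\otimes_\sigma H$) not already contained in $J_m$; each stage produces an operator $G_m \in \ms{B}(K, J_m)$ subject to seven compatibility conditions (including $V_i^*V_jG_j = G_i$) that guarantee the limit operator compresses to $G$ on $J$ and intertwines the induced algebra.

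Second, it is not true that the commutation relations force all new matrix entries except one free corner in the way you describe. In the paper's Parrott step the $2\times 2$ decomposition is $K = (K\ominus K_0)\oplus K_0$ on the domain and $J_{m+1} = J_m\oplus(J_{m+1}\ominus J_m)$ on the codomain; the known row is $G_m = \left[\begin{smallmatrix} R & T\end{smallmatrix}\right]$, the known column is $F^* = \left[\begin{smallmatrix} R \\ S\end{smallmatrix}\right]$ where $F$ is \emph{constructed explicitly} by a Cauchy-kernel-type formula $F(x) = \sum_{k\ge 1}\sum_{\xi\in A_k}\widehat{\xi}\otimes g^*\beta\bigl(W_{(Z^{(k)})^{-1}\xi}\bigr)^*x$ with $g = G_mL_{\widehat{1_M}}$, and the single free corner is $U\colon K_0 \to J_{m+1}\ominus J_m$. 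The genuinely hard step, which your sketch leaves open, is proving that this $F$ is a contraction (inequality \eqref{givesFcontractive}); that estimate occupies the bulk of the proof and uses Lemma \ref{sumstoprojection}, Lemma \ref{splitonb}, Lemma \ref{hilbertspaceisos}, and the weight identities $Z^{(k)*}Z^{(k)} = R_k^{-2}$ and $\sum_{j=1}^kX_j\otimes R_{k-j}^2 = R_k^2$ in an essential way. One must also verify, after Parrott, that the completed matrix intertwines the generators (properties \eqref{RSTU} and equations \eqref{cond5m+1} and \eqref{7_m+1.2}), including checking that the corner $U$, obtained as a weak limit of $-c_nS(I-c_n^2R^*R)^{-1}R^*T$, inherits the intertwining from $R$, $S$, and $T$. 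Your instinct about which lemmas are needed is sound, but without the chain $\{J_m\}$, the explicit formula for $F$, and the contractivity estimate, the proof is not yet there.
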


\begin{proof}
First, we consider the case when $\Vert G \Vert = 1$.
Let $K = \ms{F}(E) \otimes_{\sigma} H$, and let $P \in \ms{B}(K)$ be the projection map onto $J$.   
Note that $V^*$ is the range restriction of $P$ to $J$, $V^*V$ is the identity map on $J$, and $VV^* = P$.
For $n \in \mb{N}_0$, let $K_n$ be the isomorphic image of $\left( \sum_{j = 0}^n  \oplus   \tens{E}{j} \right) \otimes_\sigma H$ in $K$. 
For technical purposes, let $K_{-1}$ be the zero subspace of $K$. 

Our construction of $\widetilde{G} \in \ms{B}(K)$ will utilize an ascending sequence of subspaces $\{J_i\}$ of $K$ and operators $\{G_i\}$ such that each $G_i \in \ms{B}(K, J_i)$.
We construct these sequences inductively.
To begin the process, let $n_1 = -1$, let $J_1 = J$, and define $G_1 : = G   V^*$.   
To organize the numerous properties we wish to maintain at each step of the inductive argument,  we make the following definition.  
If $m \in \mb{N}$, we say $\left(\{n_i\}_{i=1}^m , \{J_i\}_{i = 1}^m,  \{G_i\}_{i = 1}^m \right)$ is an ``$m$-triple'' if for each $i$ with $1 \leq i \leq m$ we have that  $n_i \in \mb{Z}$, $J_i$ is a closed subspace of $K$, $G_i \in \ms{B}(K, J_i)$, and the following conditions hold in all cases: with $V_i$  the inclusion map of $J_i$ into $K$,
\begin{itemize}
\item[]Condition $(1,m)$: \quad$n_1 < n_2 <  \cdots < n_m$, \quad (if $m \geq 2$);
\item[]Condition $(2,m)$:  \quad$J_1 \subsetneq J_2 \subsetneq \cdots \subsetneq J_m$, \quad (if $m \geq 2$);
\item[]Condition $(3,m)$: \quad$K_{n_i} \subseteq J_i$, \quad$1 \leq i \leq m$;
\item[]Condition $(4,m)$:  \quad$(Y^* \otimes I_H)(J_i) \subseteq J_i$, \quad$ 1 \leq i \leq m$, $Y \in H^\infty(E,Z)$;
\item[]Condition $(5,m)$: \quad$( V_{i}^* (Y \otimes I_H) V_{i} ) G_i = G_i  (Y \otimes I_H)$, \quad $ 1 \leq i \leq m$, $Y \in H^\infty(E,Z)$;
\item[]Condition $(6,m)$: \quad$V_i^*V_j G_j = G_i$, \quad $1 \leq i \leq j \leq m$;
\item[]Condition $(7,m)$: \quad $\Vert G_i \Vert = 1$, \quad $ 1 \leq i \leq m$.
\end{itemize}
Our hypotheses guarantee that $\left(\{n_i\}_{i=1}^1, \{J_i\}_{i = 1}^1,  \{G_i\}_{i = 1}^1\right)$ is a $1$-triple.  

Let us show that if there exists an $m$-triple $\left(\{n_i\}_{i=1}^m , \{J_i\}_{i = 1}^m,  \{G_i\}_{i = 1}^m \right)$ with $m \in \mb{N}$ such that $J_m = K$, then to satisfy the conclusion of our theorem, we may take $\widetilde{G}$ to be $G_m$.  
Here, $G_m \in \ms{B}(K)$, $V_m$ is the identity map on $K$, and $V_1 = V$.  
By Condition $(6,m)$, $ V^* G_m = V_1^* V_m  G_m = G_1 = G V^*$.  
Thus $P G_m = V V^* G_m = V G V^*$ and
$P G_m P =  ( V G V^* )P =  V G V^* = P G_m$.
It follows that $G_m$ satisfies property \ref{WCL_theorem_i1} in the conclusion of the theorem.
Using again the fact that $ V^* G_m = GV^*$, we have $V^* G_m  V=  G V^* V = G$, so property \ref{WCL_theorem_i2} holds.  
Condition $(5,m)$ gives us property \ref{WCL_theorem_i3}, and property \ref{WCL_theorem_i4} follows from Condition $(7,m)$.

We are left to consider the case when there is no $m \in \mb{N}$ such that $J_m = K$ for some $m$-triple $\left(\{n_i\}_{i=1}^m, \{J_i\}_{i = 1}^m,  \{G_i\}_{i = 1}^m \right)$.  
Proving the following fact will occupy us for quite some time:
if $m \in \mb{N}$ and $\left(\{n_i\}_{i=1}^m, \{J_i\}_{i = 1}^m,  \{G_i\}_{i = 1}^m \right)$ is an $m$-triple such that $J_m \subsetneq K$, then there exist $n_{m+1}$, $J_{m+1}$, and $G_{m+1}$ such that  $\left(\{n_i\}_{i=1}^{m+1}, \{J_i\}_{i = 1}^{m+1}, \{G_i\}_{i = 1}^{m+1}\right)$ is an $(m+1)$-triple.  

We begin by defining $n_{m+1}$ and $J_{m+1}$.
Since $K$ is the norm-closure of $\bigcup_{j=0}^\infty K_j$ and $J_m \neq K$, the set $\{ j \in \mb{N}_0 \mid  K_j \nsubseteq J_m\}$ is nonempty, so we define $n_{m+1}$ to be its least element.
Define $J_{m+1}$ to be the closed subspace of $K$ that is generated by $J_m \cup K_{n_{m+1}}$.
It is readily shown that the first three conditions for an $(m+1)$-triple are satisfied.    
To show Condition $(4,m+1)$, we first prove that for any $j \in \mb{N}$ and $\xi \in \tens{E}{j}$,
\begin{align}
\label{cond5_projections_a}
P_{m+1}  (\wtens{\xi})  P_{m} 
& = P_{m+1}  (\wtens{\xi}) \\
\label{cond5_projections_b}
& = P_{m+1} (\wtens{\xi}) P_{m+1},
\end{align}
where $ P_{m}$ and $P_{m+1}$ are the projection maps  in $\ms{B}(K)$ onto $J_m$ and $J_{m+1}$, respectively.
First, let $j = 1$ and $\xi \in E$.  
By Condition $(4,m)$, $(\wstartens{\xi}) (J_m) \subseteq J_m$.
At the same time, from the definition of $W_\xi$ it follows that for any $n \in \mb{N}_0$, 
$(\wstartens{\xi}) (K_{n}) \subseteq K_{n-1}$.  
Therefore, by definition of $n_{m+1}$, $(\wstartens{\xi}) (K_{n_{m+1}}) \subseteq K_{(n_{m+1})-1} \subseteq J_m$,
so $(\wstartens{\xi}) (J_{m+1}) \subseteq J_m$.  
Thus, equation \eqref{cond5_projections_a} holds when $j = 1$.
Since $P_{m}P_{m+1} = P_{m}$ and $E^{\otimes k+1}$ is generated by elements of the form  $\xi = \eta \otimes \zeta $ where $\eta \in E$ and $\zeta \in E^{\otimes k}$, an inductive argument gives equation \eqref{cond5_projections_a} in the general case.   
Equation \eqref{cond5_projections_b} follows since $P_{m}P_{m+1} = P_{m}$. 
For $a \in M$, it is readily seen that 
$\phitens{a^*}$ leaves $K_n$ invariant for any $n \in \mb{N}$ and leaves $J_m$ invariant by Condition $(4,m)$.
It follows that for $Y = \varphi_\infty(a)$,
\begin{equation}
\label{4_m+1.1}
(Y^* \otimes I_H)(J_{m+1}) \subseteq J_{m+1}.
\end{equation}
By equation \eqref{cond5_projections_b}, containment \eqref{4_m+1.1} is also satisfied for $Y = W_\xi$ when $\xi \in \tens{E}{j}$ for some $j \in \mb{N}$. 
It follows that containment \eqref{4_m+1.1} holds for every $Y \in H^\infty(E,Z)$.  
This, together with Condition $(4,m)$, implies Condition $(4,m+1)$.

Before considering the remaining conditions we must define $G_{m+1}$, which will require some preliminary work.  Let us begin by recalling and establishing notation.
Let $\{A_k\}_{k = 0}^\infty$ be the family defined in the paragraph before Lemma \ref{handysums}.
For $k \in \mb{N}_0$ and $\xi \in \tens{E}{k}$, $\widehat{\xi}$ denotes $v_k(\xi) = (\delta_{i = k} \ \xi)_{i = 0}^\infty \in \ms{F}(E)$.
For $\xi \in \ms{F}(E)$,  $L_\xi $ is the insertion operator mapping $h \in H$ to $\xi \otimes h$.
Define $g : = G_m  L_{\widehat{1_M}} \in \ms{B}( H , J_m)$.   
Note that $V_m^*$ is the range restriction of $P_m$, $V_m^*V_m$ is the identity on $J_m$, and $V_mV_m^* = P_m$, with analogous properties for $V_{m+1}$.
For notational convenience, define  $\alpha: H^{\infty} (E, Z) \to \ms{B}(J_m)$ and $\beta: H^{\infty} (E, Z) \to \ms{B}(J_m, J_{m+1})$ at $Y \in H^\infty(E,Z) $ by,
\begin{equation*}
\alpha(Y)   = V_{m}^* (Y \otimes I_{H})  V_{m} \quad \text{and} \quad
\beta(Y)  = V_{m+1}^*  (Y \otimes I_{H})  V_{m}.
\end{equation*}
The following technical lemma collects facts about $\alpha$ and $\beta$ that will be useful in future computations.

\begin{lemma}\label{g,A,B_step} \hfill
\begin{enumerate}[label=(\arabic*),ref=(\arabic*)]
\item \label{g,A,B_step_2} 
$\alpha$ is contractive, linear, multiplicative, unital, ultraweakly continuous, and for any $Y \in H^\infty(E, Z)$, 
\begin{equation*} \alpha(Y)  G_m = G_m(Y \otimes I_{H}).
\end{equation*}
\item \label{g,A,B_step_3}
$\beta$ is contractive, linear, continuous with respect to the ultraweak topology on $H^\infty(E,Z)$ and the  weak operator topology on $\ms{B}(J_m, J_{m+1})$, and for any $k \in \mb{N}, \xi \in E^{\otimes k}$, and $Y \in H^{\infty} (E, Z)$,
\begin{equation*} \beta(W_\xi)  \alpha(Y) = \beta(W_\xi Y).
\end{equation*}
\item \label{g,A,B_step_4}
For any $a \in M, k \in \mb{N}_0$, and $\xi \in E^{\otimes k}$, 
$\beta(W_{\xi})^* V_{m+1}^* (\phitens{a}) V_{m+1}  = \beta(W_{a^* \cdot \xi})^*$.
\item \label{g,A,B_step_5b} 
For all  $k \in \mb{N}_0$ and $\xi \in E^{\otimes k}$, \quad
$\alpha(W_{\mcZ{k} \xi})  g = G_m  L_{\widehat{\xi}}$,
\item \label{g,A,B_step_5d}
For all  $a \in M$, $k \in \mb{N}_0$, and $\xi \in E^{\otimes k}$, \
$ \alpha(W_{\xi \cdot a})g  =  \alpha(W_\xi) g \sigma (a)$.
\item \label{g,A,B_step_6}
For all $a \in M$, $k \in \mb{N}_0$, and $\xi \in E^{\otimes k}$, \qquad $g^*  \beta(W_{\xi \cdot a})^*  = \sigma(a^*)  g^*  \beta(W_\xi)^*$.
\item \label{g,A,B_step_7}
 For any $j \in \mb{N}_0$, $\eta \in E^{\otimes j}$, and $T \in \ms{L}(E^{\otimes j})$,
\begin{equation*}
\alpha(W_{T\eta})g  = \sum_{\xi \in A_j} \alpha(W_{T \xi \langle \xi, \eta \rangle}) g,
\end{equation*}
with convergence in the weak operator topology on  $\ms{B}(H, J_m)$.
\item \label{g,A,B_step_8}
For any $j \in \mb{N}_0$, $\eta \in E^{\otimes j}$, and $T \in \ms{L}(E^{\otimes j})$,
\begin{equation*}
 g^* \beta(W_{T\eta})^*  = \sum_{\xi \in A_j} g^* \beta(W_{T \xi \langle \xi, \eta \rangle})^*,
\end{equation*} 
with convergence in the weak operator topology on $\ms{B}(J_{m+1}, H)$.
\end{enumerate}
\end{lemma}

\begin{subproof}[Proof of Lemma \ref{g,A,B_step}] \hfill

\ref{g,A,B_step_2}:
The induced representation $\sigma^{\ms{F}(E)}$ is a normal, unital $*$-homomor\-phism.  
We form $\alpha$ by composing the restriction of  $\sigma^{\ms{F}(E)}$ to $H^\infty(E,Z)$ with $Ad(V_m^*):\ms{B}(K) \to \ms{B}(J_m)$, which sends $T$ to $ V_m^* T V_m$.  
It follows that $\alpha$ is  linear, unital, contractive, and ultraweakly continuous.
By Condition $(4,m)$, for $Y \in H^\infty(E,Z)$, $V^*_{m} (Y \otimes I_H) =  V_{m}^* (Y \otimes I_H)  P_{m}$.  
It follows that  $\alpha$ is multiplicative.
By Condition $(5,m)$, for any $Y \in H^\infty(E, Z)$, $\alpha(Y) G_m = G_m (Y \otimes I_{H})$.

\ref{g,A,B_step_3}: The linearity, contractivity, and continuity of $\beta$ follow from arguments similar to those used above for $\alpha$. 
By equation \eqref{cond5_projections_a}, if $k \in \mb{N}$, $\xi \in \tens{E}{k}$, and $Y \in H^{\infty} (E, Z)$, then 
$\beta(W_\xi)  \alpha(Y) 
 = V_{m+1}^* (W_\xi \otimes I_H) P_{m} (Y \otimes I_H) V_{m} 
  = V_{m+1}^* (W_\xi \otimes I_H) (Y \otimes I_H) V_{m} 
   = \beta(W_\xi Y)$.

\ref{g,A,B_step_4}: 
Since Condition $(4,m+1)$ holds and $\phiinf{a^*} W_\xi = W_{a^* \cdot \xi}$, 
\begin{multline*}
 V_{m+1}^* (\phitens{a^*}) V_{m+1} \beta(W_{\xi}) 
  = V_{m+1}^* (\phitens{a^*}) P_{m+1} (\wtens{\xi}) V_{m}  \\
  = V_{m+1}^* (\phitens{a^*}) (\wtens{\xi}) V_{m} 
  = \beta(W_{a^* \cdot \xi}).
\end{multline*}
The result follows from taking the adjoint.

\ref{g,A,B_step_5b}:
By part \ref{g,A,B_step_2}, 
$\alpha(W_{\mcZ{k} \xi}) g 
 = G_m (\wtens{\mcZ{k} \xi}) L_{\widehat{1_M}}   
  = G_m L_{\widehat{\xi}}$.

\ref{g,A,B_step_5d}: Since $\varphi(a) = W_a  $ and $Z^{(0)} = I_M$, by part  \ref{g,A,B_step_5b}
\begin{equation}
\label{g,A,B_step_5a2}
\alpha(\phiinf{a}) g 
 = \alpha(W_a) g  = G_m L_{\widehat{a}}  = G_m L_{\widehat{1_M}}\sigma(a)  = g \sigma(a).
\end{equation}
Since $\alpha$ is multiplicative,
$\alpha(W_{\xi \cdot a}) g 
= \alpha(W_\xi) \alpha(\phiinf{a}) g
= \alpha(W_\xi) g \sigma(a)$.

\ref{g,A,B_step_6}: By Condition $(4,m)$ and equation \eqref{g,A,B_step_5a2},   
\begin{multline*}
\beta(W_{\xi \cdot a})  g  
=V_{m+1}^*(W_\xi \otimes I_H)(\varphi_\infty(a) \otimes I_H) V_m g \\
=V_{m+1}^*(W_\xi \otimes I_H)P_m(\varphi_\infty(a) \otimes I_H) V_m g \\
= \beta(W_\xi) \alpha(\phiinf{a}) g
= \beta(W_\xi) g \sigma(a).
\end{multline*}  
The result follows by taking the adjoint.

\ref{g,A,B_step_7}: We have $\eta = \sum_{\xi \in A_j } \xi \langle \xi, \eta \rangle$ since $A_j$ is either an orthonormal basis or the zero set. 
As an element of $\ms{L}(\tens{E}{j})$, $T$ is an ultraweakly continuous, right $M$-module homomorphism;
also the map that sends $\zeta \in \tens{E}{j}$ to $W_\zeta \in \LFE$ is linear and ultraweakly continuous, as is  $\alpha$.  
The result follows.

\ref{g,A,B_step_8}:  The result follows along similar lines as part \ref{g,A,B_step_7}.
\end{subproof}

One fact we will require in defining $G_{m+1}$  is that
\begin{equation} 
\label{Gm*_step}
G_m^*(x)  = \sum_{k = 0}^\infty \left( \sum_{\xi \in A_k}  \widehat{\xi} \otimes g^* \alpha \left(W_{\mcZ{k} \xi} \right)^*x \right), \quad  x \in J_{m}.
\end{equation}
To prove this fact, we will show that
\begin{equation}
\label{Gm*_step.2}
\left( \ \left( \ g^* \alpha \left(W_{\mcZ{k} \xi} \right)^*x \ \right)_{\xi \in A_k} \ \right)_{k = 0}^\infty
\end{equation}
 is an element in $\sum_{k=0}^\infty \oplus \mb{H}_k$, using the notation of Lemma \ref{hilbertspaceisos}.  
Let $k \in \mb{N}_0$ and $\xi \in A_k$.  
By Lemma \ref{splitonb}\ref{splitonb_1}, $\xi \cdot \langle \xi, \xi \rangle = \xi $, so by Lemma \ref{g,A,B_step}\ref{g,A,B_step_5d}, we have
$g^* \alpha\left(W_{\mcZ{k} \xi}\right)^*x = \sigma\langle \xi, \xi \rangle g^* \alpha\left(W_{\mcZ{k} \xi}\right)^*x \in H_\xi$. 
By  Lemma \ref{g,A,B_step}\ref{g,A,B_step_5b} and Lemma \ref{handysums}\ref{handysums_2},
\begin{multline} \label{Gm*_step.1}
\sum_{\xi \in A_k} \left\Vert g^* \alpha\left(W_{\mcZ{k} \xi }\right)^* x \right\Vert^2 
 = \sum_{\xi \in A_k} \left\Vert L_{\widehat{\xi}}^* G_m^* x \right\Vert^2 \\
 = \sum_{\xi \in A_k} \left\langle  L_{\widehat{\xi}} L_{\widehat{\xi}}^* G_m^* x, G_m^* x \right\rangle \\
 = \langle ( Q_k \otimes I_H) G_m^* x, G_m^* x \rangle
 \leq \Vert G_m^* x \Vert^2 .
\end{multline}
We conclude that for every $k \in \mb{N}_0$, $\left( \ g^* \alpha\left(W_{\mcZ{k} \xi }\right)^*x \ \right)_{\xi \in A_k}$ is a well-defined element of $\mb{H}_k$. 
Moreover,  $\sum_{k = 0 }^\infty (Q_k \otimes I_H )= I_{K}$ with strong operator convergence in $\ms{B}(K)$, so by  computation \eqref{Gm*_step.1} we find,
\begin{multline*} 
\sum_{k = 0}^\infty \left\Vert \big( \ g^* \alpha\left(W_{\mcZ{k} \xi}\right)^*x \ \big)_{\xi \in A_k} \right\Vert^2
= \sum_{k = 0}^\infty \langle (Q_k \otimes I_H) G_m^* x, G_m^* x \rangle \\
= \Vert G_m^* x \Vert ^2.
\end{multline*} 
Thus, the element in expression \eqref{Gm*_step.2} belongs to $\sum_{k=0}^\infty \oplus \mb{H}_k$.  
By Lemma \ref{hilbertspaceisos}\ref{hilbertspaceisos_3}, its image under $\Gamma^*$ in $K$ is the element in the right-hand side of equation \eqref{Gm*_step}.  
To show equality with $G_m^*(x)$, let $h \in H$, $j \in \mb{N}_0$, and $ \eta \in E^{\otimes j}$.  
For $k \in \mb{N}_0$ and $\xi \in \tens{E}{k}$, $\left\langle \widehat{\xi}, \widehat{\eta} \right\rangle$ equals $\left\langle \xi, \eta \right\rangle$ when $k = j$ and is otherwise zero.  
By Lemma \ref{g,A,B_step} parts \ref{g,A,B_step_5d}, \ref{g,A,B_step_7}, and \ref{g,A,B_step_5b},
\begin{multline*}
 \left\langle \sum_{k = 0 }^\infty \left(\sum_{\xi \in A_k }  \widehat{\xi} \otimes g^* \alpha\left(W_{\mcZ{k} \xi}\right)^*x \right), \widehat{\eta} \otimes h \right\rangle \\
 = \sum_{\xi \in A_j }  \left\langle x,  \alpha\left(W_{\mcZ{j} \xi}\right) g \sigma \langle \xi, \eta \rangle   h \right\rangle 
 = \sum_{\xi \in A_j }
\Big\langle  x  ,  \alpha\left(W_{\mcZ{j} \xi  \langle \xi , \eta \rangle}\right) g ( h)  \Big\rangle \\
=  \Big\langle  x  ,  \alpha\left(W_{ \mcZ{j}  \eta }\right) g ( h)  \Big\rangle 
=  \langle  x  ,  G_m L_{\widehat{\eta}} ( h)  \rangle
=  \langle G_m^* x  ,  \widehat{\eta} \otimes  h  \rangle.
\end{multline*} 
Equation \eqref{Gm*_step} follows from routine approximation arguments.

Another fact we need before constructing $G_{m+1}$ is that
\begin{equation}
\label{givesFcontractive}
 \sum_{k=1}^\infty \left(\sum_{\eta \in A_k} \beta\left(W_{\mcZ{k} \eta}\right) gg^* \beta\left(W_{\mcZ{k} \eta}\right)^* \right) \leq  I_{J_{m+1}}.
\end{equation}
For convenience, we break the proof into three main steps, as follows: 
\begin{align}
\label{Step1}
I_{J_{m+1}} 
& \geq  \qquad \sum_{j=1}^\infty  \left( \sum_{\xi \in A_j}   \beta\left(W_{X_j^{1/2} \xi}\right) G_m G_m^* \beta\left(W_{X_j^{1/2} \xi}\right)^* \right) \\*
\label{Step2}
\begin{split}
& = \qquad \sum_{k=1}^\infty \left( \sum_{j=1}^k \left( \sum_{\eta \in A_k} \beta\left(W_{\left(X_j^{1/2} \otimes \mcZ{k-j}\right) \eta}\right) g \right.\right. \\*
& \qquad \qquad \qquad \qquad \qquad \left. \left. \cdot g^* \beta\left(W_{\left(X_j^{1/2} \otimes \mcZ{k-j}\right) \eta}\right)^* \right) \right) 
\end{split} \\*
\label{Step3}
& = \qquad \sum_{k=1}^\infty \left( \sum_{\eta \in A_k} \beta\left(W_{\mcZ{k} \eta}\right) gg^* \beta\left(W_{\mcZ{k} \eta}\right)^* \right).
\end{align}

We begin with inequality \eqref{Step1}.  
It follows from Lemma \ref{sumstoprojection} that for any $j \in \mb{N}$, 
$\sum_{\xi \in A_j} W_{X_j^{1/2} \xi}W_{X_j^{1/2} \xi}^*$ converges in $\LFE$.
If $\mb{F}$ is a finite subset of $A_j$, then because $\sigma^{\ms{F}(E)}$ is a $*$-homomorphism and  $\Vert V_{m} G_m \Vert \leq 1$, 
\begin{align*} 
& \sum_{\xi \in \mb{F}}  \beta\left(W_{X_j^{1/2} \xi} \right) G_m G_m^* \beta\left(W_{X_j^{1/2} \xi} \right)^* \\
&  \qquad = \sum_{\xi \in \mb{F}} V_{m+1}^*  \left(\wtens{X_j^{1/2} \xi} \right) V_{m} G_m G_m^* V_{m}^* \left(\wstartens{X_j^{1/2} \xi} \right) V_{m+1} \\
&  \qquad \leq  V_{m+1}^* \left( \left( \sum_{\xi \in \mb{F}} W_{X_j^{1/2} \xi} W_{X_j^{1/2} \xi}^* \right) \otimes I_H \right) V_{m+1} \\
& \qquad \leq V_{m+1}^* \left( \left( \sum_{\xi \in A_j} W_{X_j^{1/2} \xi} W_{X_j^{1/2} \xi}^* \right) \otimes I_H \right) V_{m+1}.
\end{align*} 
It follows that $\sum_{\xi \in A_j}  \beta\left(W_{X_j^{1/2} \xi}\right) G_m G_m^* \beta\left(W_{X_j^{1/2} \xi} \right)^*$ converges ultraweakly in $\ms{B}(J_{m+1})$  and
$V_{m+1}^* \left( \left( \sum_{\xi \in A_j} W_{X_j^{1/2} \xi} W_{X_j^{1/2} \xi}^* \right) \otimes I_H \right) V_{m+1}$ is an upper bound.
This, together with Lemma \ref{sumstoprojection},  implies that for $N \in \mb{N}$,
\begin{multline*}
\sum_{j = 1}^N \left( \sum_{\xi \in A_j}  \beta\left(W_{X_j^{1/2} \xi} \right) G_m G_m^* \beta\left(W_{X_j^{1/2} \xi} \right)^* \right) \\
\leq  V_{m+1}^* \left( \sum_{j = 1}^N  \left(\sum_{\xi \in A_j} W_{X_j^{1/2} \xi} W_{X_j^{1/2} \xi}^* \right) \otimes I_H \right) V_{m+1}  
\leq  I_{J_{m+1}}.
\end{multline*} 
Inequality \eqref{Step1} follows.
Towards proving equation \eqref{Step2}, fix $j \in \mb{N}$ and $\xi \in A_j$.   
Since $\xi \cdot \langle \xi, \xi \rangle = \xi$, it follows from Lemma \ref{g,A,B_step} parts \ref{g,A,B_step_2} and \ref{g,A,B_step_3} that
\begin{equation}
\label{halfway.1}
\beta\left( W_{X_j^{1/2} \xi} \right) G_m= \beta\left( W_{X_j^{1/2} \xi}  \right) G_m  \left( \varphi_\infty \langle \xi, \xi \rangle \otimes I_H \right).
\end{equation}
Also, by Lemma \ref{g,A,B_step}\ref{g,A,B_step_5b}, whenever $l \in \mb{N}_0$ and $\zeta \in \tens{E}{l}$,
\begin{equation*}
G_m\left( \theta_{ \widehat{\zeta}} \otimes I_H\right) G_m^* 
=  G_m L_{ \widehat{\zeta}} L_{\widehat{\zeta}}^* G_m^*
=  \alpha\left(W_{\mcZ{l} \zeta}\right) gg^* \alpha\left(W_{\mcZ{l} \zeta}\right)^*.
\end{equation*}
Therefore, by Lemma \ref{splitonb}\ref{splitonb_4},
\begin{multline}
\label{Gm_phiinf_Gm*_step}
G_m  \left(\varphi_\infty \langle \xi, \xi \rangle \otimes I_{H} \right) G_m^*
  =  \sum_{l = 0 }^\infty \left( \sum_{\substack{\eta \in A_{l+j}\\ \eta^j = \xi }}  G_m\left( \theta_{ \widehat{\eta^j_0}} \otimes I_H\right) G_m^* \right)  \\
 =  \sum_{l = 0 }^\infty \left( \sum_{\substack{\eta \in A_{l+j}\\ \eta^j = \xi }}  \alpha\left(W_{\mcZ{l} \eta^j_0}\right) gg^* \alpha\left(W_{\mcZ{l} \eta^j_0}\right)^* \right).
\end{multline}
If $l \in \mb{N}_0$ and $\eta \in A_{l + j}$ with $\eta^j = \xi$, by Lemma \ref{g,A,B_step}\ref{g,A,B_step_3} and Lemma \ref{splitonb}\ref{splitonb_2}, 
\begin{equation}
\label{halfway.2}
\beta\left( W_{X_j^{1/2} \xi}  \right)  \alpha \left( W_{\mcZ{l} \eta^j_0} \right) g = \beta\left( W_{ \left( X_j^{1/2} \otimes \mcZ{l} \right) \eta} \right) g.
\end{equation}
Putting together equations \eqref{halfway.1}, \eqref{Gm_phiinf_Gm*_step}, and \eqref{halfway.2},
\small
\begin{multline*}
\beta\left( W_{X_j^{1/2} \xi} \right) G_m G_m^* \beta\left( W_{X_j^{1/2} \xi}\right)^* \\
= \beta\left( W_{X_j^{1/2} \xi} \right) G_m\left( \varphi_\infty \langle \xi, \xi \rangle \otimes I_H \right)  G_m^* \beta\left( W_{X_j^{1/2} \xi}\right)^* \\
= \sum_{l = 0 }^\infty \left(
\sum_{\substack{\eta \in A_{l+j}\\ \eta^j = \xi}}  \beta\left( W_{X_j^{1/2} \xi}  \right)  \alpha \left( W_{\mcZ{l} \eta^j_0} \right) gg^* \alpha \left( W_{\mcZ{l} \eta^j_0} \right)^*  \beta\left( W_{X_j^{1/2} \xi} \right)^* \right) \\
= \sum_{l = 0 }^\infty \left(
\sum_{\substack{\eta \in A_{l+j}\\ \eta^j = \xi}}  \beta\left( W_{ \left( X_j^{1/2} \otimes \mcZ{l} \right) \eta} \right) gg^*  \beta\left( W_{ \left( X_j^{1/2} \otimes \mcZ{l} \right) \eta} \right)^* \right).
\end{multline*}
\normalsize
Summing over $\xi \in A_j$,  interchanging sums, and  using Lemma \ref{splitonb}\ref{splitonb_3},
\small
\begin{align*}
&  \sum_{\xi \in A_j}   \beta\left( W_{X_j^{1/2} \xi} \right)  G_m G_m^* \beta\left( W_{X_j^{1/2} \xi} \right) ^*  \\
&  = \sum_{\xi \in A_j} \left( \sum_{l = 0 }^\infty \left( \sum_{\substack{\eta \in A_{l+j}\\ \eta^j =\xi} } \beta\left( W_{ \left( X_j^{1/2} \otimes \mcZ{l} \right)  \eta} \right)  gg^*  \beta\left( W_{ \left( X_j^{1/2} \otimes \mcZ{l} \right)  \eta} \right) ^* \right) \right)\\
& =  \sum_{l=0}^\infty \left( \sum_{\eta \in A_{l+j} }  \beta\left( W_{ \left( X_j^{1/2} \otimes \mcZ{l} \right)  \eta} \right)  gg^*  \beta\left( W_{ \left( X_j^{1/2} \otimes \mcZ{l} \right)  \eta} \right) ^* \right). 
\end{align*}
\normalsize
Finally, summing over $j \in \mb{N}$, re-indexing, and interchanging sums,
\small
\begin{align*}
& \sum_{j=1}^\infty  \left( \sum_{\xi \in A_j}   \beta\left( W_{X_j^{1/2} \xi} \right)  G_m G_m^* \beta\left( W_{X_j^{1/2} \xi} \right) ^* \right) \\
& = \sum_{j=1}^\infty \left(   \sum_{l=0}^\infty \left( \sum_{\eta \in A_{l+j} }  \beta\left( W_{ \left( X_j^{1/2} \otimes \mcZ{l} \right)  \eta} \right)  gg^*  \beta\left( W_{ \left( X_j^{1/2} \otimes \mcZ{l} \right)  \eta} \right) ^* \right) \right) \\
&   =  \sum_{k = 1 }^\infty \left( \sum_{j=1}^k \left( \sum_{\eta \in A_{k}}  \beta\left( W_{ \left( X_j^{1/2} \otimes \mcZ{k-j} \right)  \eta} \right)  gg^*  \beta\left( W_{ \left( X_j^{1/2} \otimes \mcZ{k-j} \right)  \eta} \right) ^*  \right) \right)
\end{align*}
\normalsize
as desired.  
Finally, to prove equation \eqref{Step3} it suffices to show that 
\begin{align}
\notag
& \sum_{j=1}^k \left( \sum_{\eta \in A_k}  \beta\left(W_{\left(X_j^{1/2} \otimes \mcZ{k-j}\right) \eta} \right) gg^* \beta\left(W_{\left(X_j^{1/2} \otimes \mcZ{k-j}\right) \eta} \right)^* \right)\\
\label{twothirds.1} 
&  \qquad = \sum_{\eta \in A_k}   \beta\left(W_{\mcZ{k} \eta}\right) gg^* \beta\left(W_{\mcZ{k} \eta}\right)^*
\end{align}
for any $k \in \mb{N}$.
Let $x \in J_{m+1}$.
By definition of $D_k$ and Lemma \ref{handysums}\ref{handysums_4}, 
$D_k D_k^* 
= D_k \left( \sum_{\xi \in A_k} T_{\xi} T_{\xi}^* \right) D_k^* 
= \sum_{\xi \in A_k} W_{\xi} W_{\xi}^*$.
Since $\Vert g \Vert \leq 1$, 
\begin{multline*}
\sum_{\xi \in A_k} \Vert g^* \beta(W_{ \xi})^*x \Vert^2  =
\sum_{\xi \in A_k} \Vert g^* V_{m}^* (\wstartens{\xi}) V_{m+1} x \Vert^2 \\
\leq \sum_{\xi \in A_k} \Vert  (\wstartens{\xi})  x \Vert^2 
= \sum_{\xi \in A_k} \left\langle ( W_\xi W_\xi^* \otimes I_H) x, x \right\rangle  \\
= \left\langle ( D_k D_k^* \otimes I_H) x, x \right\rangle 
= \left\Vert ( D_k^* \otimes I_H) x \right\Vert^2.
\end{multline*} 
Using the notation and results from Lemma \ref{hilbertspaceisos}, it follows from Lemma \ref{g,A,B_step}\ref{g,A,B_step_6} that for every $\xi \in A_k$,  $g^* \beta(W_{ \xi})^*x$ belongs to $H_\xi $.  
The preceding computation shows that $( g^* \beta(W_{\xi})^* x)_{\xi \in A_k}$ is an element in $\mb{H}_k$; let $\widetilde{x}$ be its image in $\tens{E}{k} \otimes_\sigma H$ under the isomorphism $\gamma_k^*$.
If $S$ is any element in $\ms{L}(\tens{E}{k})$, then using Lemma \ref{handysums}\ref{handysums_1} and applying $\sigma^{\tens{E}{k}}$, 
we have $SS^* \otimes I_H =  \sum_{\xi  \in A_k } \left( \theta_{S\xi} \otimes I_H \right)$.
For each $\xi \in A_k$, the matrix for $\theta_{S\xi} \otimes I_H$, when viewed as an operator in $\ms{B}(\mb{H}_k)$, is $\left[ \sigma\langle \mu, S\xi  \rangle \sigma \langle S\xi , \nu \rangle \right]_{\mu,\nu \in A_k}$ by Lemma \ref{hilbertspaceisos}\ref{hilbertspaceisos_2}, so by Lemma \ref{g,A,B_step} parts \ref{g,A,B_step_6} and \ref{g,A,B_step_8}, a matrix computation reveals,
\begin{multline}
\label{xtilde} 
\langle \widetilde{x}, (SS^* \otimes I_H) \widetilde{x} \rangle \\
= \sum_{\xi  \in A_k } \left( \sum_{\mu \in A_k } \left( \sum_{\nu \in A_k } \left\langle \sigma \langle S\xi , \mu \rangle g^* \beta(W_{\mu})^* x,   \sigma\langle S\xi , \nu \rangle ( g^* \beta(W_{\nu})^* x) \right\rangle \right) \right) \\
= \sum_{\xi  \in A_k } \left( \sum_{\mu \in A_k } \left( \sum_{\nu \in A_k } \left\langle g^* \beta(W_{\mu  \langle \mu, S\xi  \rangle})^* x,     g^* \beta(W_{\nu  \langle \nu, S \xi  \rangle})^* x \right\rangle \right) \right) \\
= \sum_{\xi \in A_k } 
\left\langle g^* \beta(W_{ S\xi  })^* x,     g^* \beta(W_{S \xi  })^* x \right\rangle 
= \sum_{\xi \in A_k } 
\left\langle \beta(W_{S \xi  })gg^* \beta(W_{ S\xi  })^* x, x \right\rangle.
\end{multline} 
Using the identities, $Z^{(k)*} Z^{(k)} = R_k^{-2}$ and $ \sum_{j = 1}^k X_j \otimes R_{k-j}^2 = R_k^2$, as well as \emph{two} applications of equation \eqref{xtilde}, first when $S = X_j^{1/2} \otimes \mcZ{k-j}$ and second when $S = \mcZ{k}$, we obtain
\small
\begin{multline*}
\left\langle \sum_{j=1}^k \left( \sum_{\eta \in A_k}  \beta\left(W_{\left(X_j^{1/2} \otimes \mcZ{k-j}\right) \eta} \right) gg^* \beta\left(W_{\left(X_j^{1/2} \otimes \mcZ{k-j}\right) \eta} \right)^* \right) x, x \right\rangle \\
= \sum_{j = 1}^k \langle \widetilde{x}, \left( ( X_j \otimes R_{k-j}^2) \otimes I_{H} \right) \widetilde{x} \rangle 
= \langle \widetilde{x}, ( R_k^2 \otimes I_{H}) \widetilde{x} \rangle \\
= \sum_{\eta \in A_k} \left\langle \left(  \beta\left(W_{\mcZ{k} \eta}\right) gg^* \beta\left(W_{\mcZ{k} \eta}\right)^* \right) x, x \right\rangle.
\end{multline*}
\normalsize
Thus $\sum_{\eta \in A_k} \beta\left(W_{\mcZ{k} \eta}\right) gg^* \beta\left(W_{\mcZ{k} \eta}\right)^* $ converges in $\ms{B}(J_{m+1})$ and equation \eqref{twothirds.1} holds, which completes the proof of inequality  \eqref{givesFcontractive}.

Momentarily, we define $G_{m+1} \in \ms{B}(K, J_{m+1})$ as a $2 \times 2$ operator-valued matrix with respect to the decompositions $K = (K \ominus K_0) \oplus K_0$ and $J_{m+1} = J_m \oplus (J_{m+1} \ominus J_m)$.  
One of the columns of $G_{m+1}$ is determined by the adjoint of a contraction $F:J_{m+1} \to K \ominus K_0$ that we define as follows at $x \in J_{m+1}$.  
By  inequality \eqref{givesFcontractive}, 
\begin{multline}
\label{defineF.1}
\sum_{k=1}^\infty \left( \sum_{\xi \in A_k} \Vert g^* \beta\left(W_{\mcZ{k} \xi}\right)^* x \Vert^2 \right) \\
= \left\langle \sum_{k=1}^\infty \left(\sum_{\xi \in A_k} \beta\left(W_{\mcZ{k} \xi}\right) gg^* \beta\left(W_{\mcZ{k} \xi}\right)^* \right) x, x \right\rangle 
\leq \Vert x \Vert^2.
\end{multline} 
It follows that $\left( \delta_{k \geq 1} \ \left( g^* \beta\left(W_{\mcZ{k} \xi}\right)^* x \right)_{\xi \in A_k} \right)_{k = 0}^\infty$ is a well-defined element of $\sum_{k = 0}^\infty \oplus \mb{H}_k$ where $\delta_{k \geq 1}$ is $1$ when $k \geq 1$ and $0$ when $k = 0$.  
We define $F(x)$ to be its image under $\Gamma^*$ in $K$, namely
\begin{equation}
\label{defineF}
F(x)  = \sum_{k = 1}^\infty \left( \sum_{\xi \in A_k}  \widehat{\xi} \otimes g^* \beta\left(W_{\mcZ{k} \xi}\right)^*x \right).  
\end{equation}
The operator $F$ is linear, and by inequality \eqref{defineF.1},  $F$ is contractive.
If $a \in M$, $k \in \mb{N}$, and $\xi \in A_k$, then $\left\langle \widehat{\xi}, \widehat{a} \right\rangle = 0$.  
Thus, for $x \in J_{m+1}, a \in M$, and $h \in H$, 
\begin{equation*} 
\langle F(x) , \widehat{a} \otimes h \rangle  = 
\sum_{k=1}^\infty \left( \sum_{\xi \in A_k} \left\langle g^* \beta\left(W_{\mcZ{k} \xi}\right)^* x, \sigma\left\langle \widehat{\xi}, \widehat{a} \right\rangle h \right\rangle \right) 
= 0.
\end{equation*} 
It follows that $F(x) \in K \ominus K_0$, as desired.

For organizational purposes, we temporarily adopt the following notation:  if $L$ is a Hilbert space with closed subspace $L_0$, let $\inc{L_0}{L}$ denote the inclusion isometry  in $\ms{B}(L_0, L)$, and let $\proj{L_0}{L}$ denote the projection in $\ms{B}(L)$ onto $L_0$.
It follows that  $\inc{L_0}{L}^*$ is the range restriction of $\proj{L_0}{L}$ to $L_0$,  $\inc{L_0}{L}\inc{L_0}{L}^* = \proj{L_0}{L}$, and $\inc{L_0}{L}^*\inc{L_0}{L}$ is the identity on $L_0$.  
In this notation, let us show that $F$ and $G_m$ satisfy the following relationship:
\begin{equation}
\label{relationship} 
\inc{J_m}{J_{m+1}}^*  F^* = G_m \inc{K \ominus K_0}{K}.
\end{equation} 
If $x \in J_m$ then by equations \eqref{defineF} and  \eqref{Gm*_step},
\begin{multline*}
F \inc{J_m}{J_{m+1}}x  = 
\sum_{k = 1}^\infty \left( \sum_{\xi \in A_k }  \widehat{\xi} \otimes g^* \beta\left(W_{\mcZ{k} \xi} \right)^* \inc{J_m}{J_{m+1}} x \right)  \\
= \sum_{k = 1}^\infty \left( \sum_{\xi \in A_k}  \widehat{\xi} \otimes g^* \alpha\left(W_{\mcZ{k} \xi}\right)^*x \right) 
=  \inc{K \ominus K_0}{K}^* G_m^* x.
\end{multline*} 
Therefore, $F \inc{J_m}{J_{m+1}} = \inc{K \ominus K_0}{K}^* G_m^*$, and equation \eqref{relationship} follows.

In our $2 \times 2$ matricial definition of $G_{m+1} $, the following maps, $R$, $S$, and $T$, will comprise three of the four corners.
Define
\begin{alignat*}{2}
R  & = \inc{J_m}{J_{m+1}}^*   F^*     = G_m  \inc{K\ominus K_0}{K} && \quad \text{in } \ms{B}(K \ominus K_0, J_m);\\
S & = \inc{J_{m+1} \ominus J_m}{J_{m+1}}^*  F^* && \quad \text{in } \ms{B}(K \ominus K_0, J_{m+1} \ominus J_m); \text{ and}\\
T & = G_m  \inc{K_0}{K} && \quad \text{in } \ms{B}(K_0, J_m);
\end{alignat*}
noting that $R$ is well-defined by equation \eqref{relationship}.  
Matricially,
\begin{equation} 
\label{row_column}
F^* = \left[ \begin{matrix} R \\ S \end{matrix} \right] \quad \text{ and } \quad  G_m = \left[ \begin{matrix} R & T \end{matrix} \right].
\end{equation} 

By Parrott's lemma \cite{Parrott1978}, there exists $U \in \ms{B}(K_0,J_{m+1} \ominus J_m)$ such that 
\begin{equation} 
\label{Parrott.1}
\left\Vert \left[ 
\begin{matrix} R & T \\ S & U
\end{matrix}
\right] \right\Vert   =  \max \left\{ \left\Vert F \right\Vert, \left\Vert G_m \right\Vert\right\}, 
\end{equation}
so at last we define
\begin{equation*} 
\label{defineG_m+1}
G_{m+1} = \left[ 
\begin{matrix} R & T \\ S & U
\end{matrix}
\right]
\end{equation*} 
 in $\ms{B}(K, J_{m+1})$.
By equation \eqref{row_column},
\begin{align}
\label{row_column.2a}
G_{m+1} \inc{K \ominus K_0}{K} = F^*  \text{ and } \\
\label{row_column.2b}
 \inc{J_m}{J_{m+1}}^* G_{m+1} = G_m .
\end{align}

Our goal is to show that $\left(\{n_i\}_{i=1}^{m+1}, \{J_i\}_{i = 1}^{m+1},  \{G_i\}_{i = 1}^{m+1} \right)$ is an $(m+1)$-triple.
We have already shown that the first four $(m+1)$-level conditions are satisfied.  
Condition $(7,m+1)$ readily follows from equation \eqref{Parrott.1}.

Towards proving Condition $(5,m+1)$, recall that  $\sigma^{\ms{F}(E)} \circ \varphi_{\infty}: M \to \ms{B}(K)$ is the normal, unital $*$-homomorphism that maps $a \in M$ to $\phiinf{a} \otimes I_{H}$.  
Let each of 
$\psi_1 : M \to \ms{B}(K \ominus K_0)$, 
$\psi_2 : M \to \ms{B}(K_0)$,
$\pi_1 : M \to \ms{B}(J_m)$, and
$\pi_2 : M \to \ms{B}(J_{m+1} \ominus J_m)$
be the compression of $\sigma^{\ms{F}(E)} \circ \varphi_{\infty}$ to the indicated space; 
for instance, for every $a \in M$,  $\psi_1(a) = \inc{K \ominus K_0}{K}^* (\phiinf{a} \otimes I_{H}) \inc{K \ominus K_0}{K}$.
The spaces $K_0$, $J_m$, and $J_{m+1}$ reduce $\sigma^{\ms{F}(E)} \circ \varphi_{\infty}$, from which it follows that $K \ominus K_0$ and $J_{m+1} \ominus J_m$ also reduce $\sigma^{\ms{F}(E)} \circ \varphi_{\infty}$.  
Therefore, $\psi_1$, $\psi_2$, $\pi_1$, and $\pi_2$ are normal, unital $*$-homomorphisms. 
We want to show that 
\begin{equation}\label{RSTU}
R   \in \mc{I}(\psi_1,\pi_1); \
S   \in \mc{I}(\psi_1,\pi_2);\
T   \in \mc{I}(\psi_2,\pi_1); \text{ and } 
U   \in \mc{I}(\psi_2,\pi_2).
\end{equation}
Note that $V_m = \inc{J_m}{K}$ and $V_{m+1} = \inc{J_{m+1}}{K}$.
Since  $K \ominus K_0$ reduces $\sigma^{\ms{F}(E)} \circ \varphi_{\infty}$,  if $a \in M$, then by Condition  $(5,m)$,
\begin{multline*}
R  \psi_1(a) 
 = G_m  \proj{K \ominus K_0}{K}   (\phiinf{a} \otimes I_{H}) \inc{K \ominus K_0}{K} \\
 = G_m   (\phiinf{a} \otimes I_{H}) \inc{K \ominus K_0}{K} \\
 =   V_m^* (\phiinf{a} \otimes I_{H}) V_m G_m \inc{K \ominus K_0}{K} 
 = \pi_1(a) R,
\end{multline*}
Thus $R   \in \mc{I}(\psi_1,\pi_1)$; analogously $T   \in \mc{I}(\psi_2,\pi_1)$.  
To show that $S   \in \mc{I}(\psi_1,\pi_2)$, it suffices to show that for every $a \in M$, $x \in J_{m+1} \ominus J_m$, $\eta \in \tens{E}{j}$ with $j \in \mb{N}$, and $h \in H$,
\begin{equation}
\label{RST_step_e1} \langle \widehat{\eta} \otimes h, \psi_1(a)S^*x \rangle = \langle \widehat{\eta} \otimes h, S^*\pi_2(a) x\rangle.
\end{equation}
Since $K \ominus K_0$ reduces $\sigma^{\ms{F}(E)} \circ \varphi_{\infty}$, by equation \eqref{defineF},
\begin{align*}
\psi_1(a) S^*x 
& = (\phiinf{a} \otimes I_{H})F   x  \\
& = (\phiinf{a} \otimes I_{H}) \left( \sum_{k=1}^\infty \left( \sum_{\xi \in A_k }  \widehat{\xi} \otimes g^* \beta\left(W_{\mcZ{k} \xi}\right)^* x \right) \right) \\
& = \sum_{k=1}^\infty \left( \sum_{\xi \in A_k }  \widehat{a \cdot \xi} \otimes g^* \beta\left(W_{\mcZ{k} \xi}\right)^* x  \right).
\end{align*}
Thus by Lemma \ref{g,A,B_step}, parts \ref{g,A,B_step_6} and \ref{g,A,B_step_8},
\begin{multline} 
\label{RST_step_2b.1}
\langle \widehat{\eta} \otimes h, \psi_1(a)S^*x \rangle 
= \sum_{\xi \in A_j} \left\langle   h, \sigma \langle \eta, a \cdot \xi \rangle\left(g^* \beta\left(W_{\mcZ{j} \xi}\right)^* x \right) \right\rangle \\
= \sum_{\xi \in A_j} \left\langle   h, g^* \beta\left(W_{\mcZ{j} \xi \cdot \langle \xi, a^* \eta \rangle}\right)^* x  \right\rangle  
= \left\langle   h, g^* \beta\left(W_{\mcZ{j}  (a^* \eta)} \right)^* x  \right\rangle.
\end{multline}
On the other hand, since $J_{m+1} \ominus J_m$ reduces $\sigma^{\ms{F}(E)} \circ \varphi_\infty$,  by Lemma \ref{g,A,B_step}\ref{g,A,B_step_4},
\begin{multline*}
S^* \pi_2(a)x 
= F  \inc{J_{m+1} }{K}^*  (\phitens{a}) \inc{J_{m+1} }{K} x \\
  = \sum_{k=1}^\infty \left( \sum_{\xi \in A_k }  \widehat{\xi} \otimes \left(g^* \beta\left(W_{\mcZ{k} \xi}\right)^*\inc{J_{m+1} }{K}^*  (\phitens{a}) \inc{J_{m+1} }{K} x \right) \right)\\
  = \sum_{k=1}^\infty \left( \sum_{\xi \in A_k }  \widehat{\xi} \otimes g^* \beta\left(W_{a^* \cdot \mcZ{k} \xi}\right)^* x \right).
\end{multline*}
Therefore by Lemma \ref{g,A,B_step}, parts \ref{g,A,B_step_6} and \ref{g,A,B_step_8}, since $\mcZ{j} \in \varphi_j(M)^c$,
\begin{multline} 
\label{RST_step_2b.2}
\langle \widehat{\eta} \otimes h, S^*\pi_2(a) x\rangle
= \sum_{\xi \in A_j} \left\langle   h, \sigma \langle \eta,  \xi \rangle\left(g^* \beta\left(W_{a^* \cdot \mcZ{j} \xi}\right)^* x \right) \right\rangle \\
=  \sum_{\xi \in A_j} \left\langle   h, g^* \beta\left(W_{a^* \mcZ{j} \xi \langle \xi, \eta \rangle}\right)^* x  \right\rangle
=   \left\langle   h, g^* \beta\left(W_{\mcZ{j} (a^*  \eta) }\right)^* x  \right\rangle.
\end{multline}
Combining equations \eqref{RST_step_2b.1} and \eqref{RST_step_2b.2}, we obtain equation \eqref{RST_step_e1}, so $S   \in \mc{I}(\psi_1,\pi_2)$.  
Examining the proof of Parrott's Lemma in \cite{Parrott1978}, we see that $U$ is given as the weak operator limit of the sequence
\begin{equation*}
\left\{ -c_n S(I - c_n^2 R^* R )^{-1} R^* T \right\}_{n = 0}^\infty
\end{equation*}
for some sequence of numbers $\{c_n\}_{n = 0}^\infty$.
Since 
 $R   \in \mc{I}(\psi_1,\pi_1)$, 
$S   \in \mc{I}(\psi_1,\pi_2)$, and
$T   \in \mc{I}(\psi_2,\pi_1)$, it follows that $U   \in \mc{I}(\psi_2,\pi_2)$, which gives \eqref{RSTU}.

To establish Condition $(5,m+1)$, by Conditions $(4,m+1)$ and $(5,m)$, it suffices to show that 
\begin{equation} 
\label{cond5m+1}
(V_{m+1}^* (Y \otimes I_H) V_{m+1} )G_{m+1} = G_{m+1} (Y \otimes I_H) 
\end{equation}
in only two cases: when $Y = \varphi_\infty(a)$ for some $a \in M$ and when $Y = W_\xi$ for some $\xi \in E$.
Using properties \eqref{RSTU}, we compute
\begin{multline*}
V_{m+1}^* (\phitens{a}) V_{m+1} G_{m+1} 
 = 
\left[ \begin{matrix} \pi_1(a) & 0 \\ 0 & \pi_2(a) \end{matrix} \right] \left[ 
\begin{matrix} R & T \\ S & U
\end{matrix}
\right] \\
= \left[ \begin{matrix} \pi_1(a) R & \pi_1(a) T \\ \pi_2(a)  S & \pi_2(a) U
\end{matrix}
\right] 
= \left[ 
\begin{matrix} R \psi_1(a) & T \psi_2(a)\\ S \psi_1(a)& U \psi_2(a)
\end{matrix}
\right]  \\
= \left[ 
\begin{matrix} R & T \\ S & U
\end{matrix}
\right] \left[ \begin{matrix} \psi_1(a) & 0 \\ 0 & \psi_2(a) \end{matrix} \right] 
= G_{m+1} (\phitens{a}).
\end{multline*}
This establishes equation \eqref{cond5m+1} when $Y = \varphi_\infty(a)$. 
For the case when $Y = W_\xi$, it suffices to show that for every $j \in \mb{N}_0, \eta \in E^{\otimes j}, h \in H$, and $ x \in J_{m+1}$, 
\begin{equation}
\label{7_m+1.2} 
\left\langle V_{m+1}^* (\wtens{\xi}) V_{m+1} G_{m+1} \left( \widehat{\eta} \otimes  h\right)  ,  x  \right\rangle = \left\langle G_{m+1} (\wtens{\xi}) \left( \widehat{\eta} \otimes h \right), x \right\rangle.
\end{equation}
With $W_{\xi}(\widehat{\eta}) \otimes h = \widehat{Z_{j+1}(\xi \otimes \eta)} \otimes h \in K \ominus K_0$, by equations \eqref{row_column.2a} and \eqref{defineF},
\begin{multline*}
\left\langle G_{m+1} (\wtens{\xi}) \left( \widehat{\eta} \otimes h \right), x \right\rangle \\
= \left\langle  \widehat{Z_{j+1} (\xi \otimes \eta) }  \otimes h, \inc{K \ominus K_0}{K}^* G_{m+1}^* x \right\rangle 
= \left\langle  \widehat{Z_{j+1} (\xi \otimes \eta) } \otimes h, F x \right\rangle  \\
= \sum_{k = 1}^\infty \left( \sum_{\mu \in A_{k}} \left\langle  \widehat{Z_{j+1} (\xi \otimes \eta) } \otimes h, \widehat{\mu} \otimes g^* \beta\left(W_{\mcZ{k} \mu} \right)^*x \right\rangle \right).
\end{multline*}
For $k \in \mb{N}$ and $\mu \in A_k$,  $\left\langle \widehat{Z_{j+1} (\xi \otimes \eta)}, \widehat{\mu} \right\rangle$ is $\langle Z_{j+1} (\xi \otimes \eta), \mu \rangle$ when $k = j+1$ and is otherwise zero.
Recalling that $\mcZ{j+1} Z_{j+1} = I_1 \otimes \mcZ{j}$, we continue the preceding computation using Lemma \ref{g,A,B_step}, parts \ref{g,A,B_step_6}  and \ref{g,A,B_step_8}, 
\begin{multline}
\label{7_m+1.4}
\left\langle G_{m+1} (\wtens{\xi}) \left( \widehat{\eta} \otimes h \right), x \right\rangle \\
= \sum_{\mu \in A_{j+1}} \left\langle   h, \sigma\langle Z_{j+1} (\xi \otimes \eta), \mu \rangle \left( g^* \beta\left(W_{\mcZ{j+1} \mu} \right)^*x \right) \right\rangle \\
= \sum_{\mu \in A_{j+1}} \left\langle   h,  g^* \beta\left(W_{\mcZ{j+1} \mu \langle \mu, Z_{j+1} (\xi \otimes \eta) \rangle}\right)^*x  \right\rangle \\
=  \left\langle   h,  g^* \beta\left(W_{\mcZ{j+1} Z_{j+1} (\xi \otimes \eta) }\right)^*x  \right\rangle 
=  \left\langle   h,  g^* \beta\left(W_{\xi \otimes \mcZ{j}\eta}\right)^*x \right\rangle \\
=  \left\langle  \beta\left(W_{\xi \otimes \mcZ{j}\eta}\right) g(h), x \right\rangle.
\end{multline}
On the other hand, we have that $P_{m} V_{m+1} = V_m \inc{J_m}{J_{m+1}}^*$ and $V_{m+1}^* (\wtens{\xi}) = V_{m+1}^* (\wtens{\xi}) P_{m}$ by equation \eqref{cond5_projections_a}.  
Thus, by  equation \eqref{row_column.2b} and Lemma \ref{g,A,B_step}, parts \ref{g,A,B_step_5b} and \ref{g,A,B_step_3},
\begin{multline*}
V_{m+1}^* (\wtens{\xi}) V_{m+1} G_{m+1} \left( \widehat{\eta} \otimes  h \right) 
=  V_{m+1}^* (\wtens{\xi}) P_{m} V_{m+1} G_{m+1} L_{\widehat{\eta}} h \\
= V_{m+1}^*  (\wtens{\xi}) V_m \inc{J_m}{J_{m+1}}^* G_{m+1} L_{\widehat{\eta}} h 
=   \beta(W_{\xi}) G_m L_{\widehat{\eta}} h\\
=  \beta\left(W_{\xi}\right) \alpha\left(W_{\mcZ{j}\eta}\right) g(h)  
=  \beta\left(W_{\xi \otimes \mcZ{j}\eta}\right) g (h).
\end{multline*}
This fact together with equation \eqref{7_m+1.4} yields equation \eqref{7_m+1.2}, which completes the proof of Condition $(5,m+1)$.

All that remains is Condition $(6,m+1)$, and we need only consider the case when $j = m+1$. 
If $i = m+1$, then $V_i^* V_j G_j = G_{m+1} = G_i$, as desired.
Suppose $1 \leq i \leq m$.  
Since  $J_i \subseteq J_m \subseteq J_{m+1}\subseteq K$, it follows that $V_j^* V_i  = \inc{J_m}{J_{m+1}}\inc{J_i}{J_{m}}$ and $V_m^* V_i  = \inc{J_i}{J_{m}}$.  
Taking adjoints, applying equation \eqref{row_column.2b}, and using Condition $(6,m)$, we have 
$V_i^* V_j G_j
=  \inc{J_i}{J_{m}}^* \inc{J_m}{J_{m+1}}^*   G_{m+1} 
=  \inc{J_i}{J_{m}}^*  G_m 
= V_i^* V_m G_m 
= G_i$.  
Thus, Condition $(6,m+1)$ is satisfied.
Having shown all seven conditions, we conclude that 
$\left(\{n_i\}_{i=1}^{m+1}, \{J_i\}_{i = 1}^{m+1}, \{G_i\}_{i = 1}^{m+1}\right)$ is an $(m+1)$-triple.

Recall that we are considering the case when there is no $m \in \mb{N}$ such that an $m$-triple $\left(\{n_i\}_{i=1}^m, \{J_i\}_{i = 1}^m,  \{G_i\}_{i = 1}^m \right)$ exists with $J_m = K$.  
Since we know $\left( \{n_i\}_{i=1}^1, \{J_i\}_{i = 1}^1,  \{G_i\}_{i = 1}^1 \right)$ is a $1$-triple, the result we have just shown, together with an inductive argument, guarantees the existence of three sequences $\{n_i\}_{i=1}^\infty$, $\{J_i\}_{i = 1}^\infty$, and $\{G_i\}_{i = 1}^\infty$ such that  $\left(\{n_i\}_{i=1}^m, \{J_i\}_{i = 1}^m,  \{G_i\}_{i = 1}^m \right)$ is an $m$-triple for every $m \in \mb{N}$.
As before, for every $i \in \mb{N}$, let $V_i \in \ms{B}(J_i, K)$ be the inclusion map, and let  $P_i \in \ms{B}(K)$ be the projection map  onto $J_i$. 

Towards defining $\widetilde{G}$, note that $\{J_i\}_{i = 1}^\infty$ is an increasing family of subspaces of $K$ because Condition $(2,m)$ holds for every $m$, so $J_\infty : = \bigcup_{k = 1}^\infty J_k$ is a linear subspace of $K$.  
An inductive argument using Condition $(1,m)$ for $m \geq 2$ implies that for all $k \in \mb{N}_0$, $k \leq n_{k+2}$.  
Thus for each $k \in \mb{N}_0$, Condition $(3,k+2)$ implies that $K_k \subseteq K_{n_{k+2}} \subseteq J_{k+2}$.  
Therefore $\bigcup_{k = 0}^\infty K_k \subseteq J_\infty$. 
It follows that $K =  \overline{J_\infty}$.  
Suppose $x \in J_\infty$ and $x \in J_i \cap J_j$ for some $i,j \in \mb{N}$.  
Assuming without loss of generality that $i \leq j$, $J_i \subseteq J_j$ by Condition $(2,j)$, and by Condition $(8,j)$, $V_i^* V_j G_j = G_i$.  
Therefore, $G_i^*x  = G_j^* V_j^* V_i x =  G_j^*x$.
Using the Conditions $(7, m)$ for $m \geq 1$, it follows that there is a well-defined contraction $C \in \ms{B}(K)$ such that for every $k \in \mb{N}$ and $x \in J_k$, $C(x) = G_k^*(x)$. 
  
Let us define $\widetilde{G} : = C^* $
and show that $\widetilde{G}$ satisfies the four properties stated in the conclusion of the theorem.
First, note that for any $k \in \mb{N}$ and $x \in J_k$, $\widetilde{G}^* V_{k}x = Cx = G_k^*x$.  Thus
\begin{equation}
\label{WCT_e1} \widetilde{G}^*V_{k} = G_k^*, \quad   \forall k \in \mb{N}.  
\end{equation}
In particular, $\widetilde{G}^* V = G_1^*$ since $V = V_1$.  
Since $G_1 = GV^*$ and $P = VV^*$ is the projection map onto $J$ in $\ms{B}(K)$, we have $\widetilde{G}^*P  = V G^*V^*$.
Properties \ref{WCL_theorem_i1} and \ref{WCL_theorem_i2} follow readily.
Let $Y \in H^\infty(E,Z)$ and $k \in \mb{N}$.
By Condition $(4,k)$, $V_k^*(Y \otimes I_H) =  V_{k}^* (Y \otimes I_H) P_k$; by Condition $(5,k)$ and two applications of the adjoint of equation \eqref{WCT_e1}, 
$V_k^* (Y \otimes I_H) \widetilde{G}
= V_{k}^* (Y \otimes I_H)   P_k \widetilde{G}
= V_{k}^* (Y \otimes I_H)   V_{k} G_k
= G_k (Y \otimes I_H) 
= V_{k}^*\widetilde{G}(Y \otimes I_H)$.
Property \ref{WCL_theorem_i3} follows since $K = \overline{J_\infty}$.  
Finally, $C$ is a contraction, but also $\Vert C \Vert \geq 1$ since $\Vert G_1 \Vert = 1$.  
This gives property \ref{WCL_theorem_i4}, completing the proof of the theorem in the case when $\Vert G \Vert = 1$.  
If $G$ is the zero operator in $\ms{B}(J)$, then the zero operator in $\ms{B}(K)$  satisfies the desired properties for $\widetilde{G}$.  
If $G \neq 0$, then a straightforward scaling argument utilizing the case treated above produces the desired result.
\end{proof}

The following corollary generalizes Theorem \ref{WCL_theorem} to the case where the $W^*$-algebra is represented on \emph{two} Hilbert spaces.  
The proof makes use of the so-called Putnam trick.
It is this corollary that we use to prove the weighted Nevanlinna-Pick interpolation theorem in Section \ref{wnpChap}.

\begin{corollary}\label{GenWeightedCommutantLifting}
Let $\sigma_1: M \to \ms{B}(H_1)$ and $\sigma_2: M \to \ms{B}(H_2)$ be faithful, normal, unital $*$-homomorphism for  Hilbert spaces $H_1$ and $H_2$, and let $Z$ be a  sequence of weights associated with $X$.  
For $j = 1, 2$, suppose that $J_j$ is a closed linear subspace of $\ms{F}(E) \otimes_{\sigma_j} H_j$ such that for every $Y \in H^\infty(E,Z)$, $\left(Y^* \otimes I_{H_j} \right) (J_j)  \subseteq J_j$.
For $j = 1,2$, let $V_j$ be the inclusion map of ${J_j}$ into $\ms{F}(E) \otimes_{\sigma_j} H_j$.  
Suppose there exists $G \in \ms{B}(J_1, J_2)$ such that for every $Y \in H^\infty(E,Z)$,
$G \left( V_1^* (Y \otimes I_{H_1}) V_1 \right) = \left(V_2^* (Y \otimes I_{H_2}) V_2 \right)  G$.
Then there exists $\widetilde{G} \in \ms{B}(\ms{F}(E) \otimes_{\sigma_1} H_1, \ms{F}(E) \otimes_{\sigma_2} H_2)$ such that 
\begin{enumerate}[label=(\arabic*),ref=(\arabic*)]
\item  \label{wcl.cor.1} $\widetilde{G}^* (J_2) \subseteq J_1$, 
\item \label{wcl.cor.2} $ V_2^* \widetilde{G}  V_1 = G$, 
\item  \label{wcl.cor.3} $\widetilde{G}  (Y \otimes I_{H_1}) = (Y \otimes I_{H_2}) \widetilde{G}$ for all $Y \in H^\infty(E,Z)$, and
\item \label{wcl.cor.4} $\Vert \widetilde{G} \Vert = \Vert G \Vert$.
\end{enumerate}
\end{corollary}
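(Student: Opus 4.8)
The plan is to deduce this from Theorem~\ref{WCL_theorem} by the Putnam trick: one represents $M$ on the single Hilbert space $H_1 \oplus H_2$ and encodes $G$ as the lower-left corner of a $2 \times 2$ operator matrix acting on the direct sum.

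Concretely, I would set $H = H_1 \oplus H_2$ and $\sigma = \sigma_1 \oplus \sigma_2 \colon M \to \ms{B}(H)$, which is again a faithful, normal, unital $*$-homomorphism. The defining inner product on the balanced tensor product splits along the direct-sum decomposition of $\sigma$, so there is a canonical identification $\ms{F}(E) \otimes_\sigma H \cong \left( \ms{F}(E) \otimes_{\sigma_1} H_1 \right) \oplus \left( \ms{F}(E) \otimes_{\sigma_2} H_2 \right)$ under which $Y \otimes I_H$ corresponds to $\left( Y \otimes I_{H_1} \right) \oplus \left( Y \otimes I_{H_2} \right)$ for every $Y \in H^\infty(E,Z)$, and each summand $\ms{F}(E) \otimes_{\sigma_j} H_j$ reduces $Y \otimes I_H$. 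Put $J = J_1 \oplus J_2$ inside $\ms{F}(E) \otimes_\sigma H$; since each $J_j$ is invariant under $Y^* \otimes I_{H_j}$, the subspace $J$ satisfies $(Y^* \otimes I_H)(J) \subseteq J$ for all $Y \in H^\infty(E,Z)$. Let $\mathcal{V}$ be the inclusion of $J$; because the summands reduce $Y \otimes I_H$, the compression $\mathcal{V}^*(Y \otimes I_H)\mathcal{V}$ is block diagonal with diagonal entries $V_j^*(Y \otimes I_{H_j})V_j$. Then I would define $\widehat{G} \in \ms{B}(J)$ by the block matrix $\left[\begin{smallmatrix}0 & 0\\ G & 0\end{smallmatrix}\right]$ with respect to $J = J_1 \oplus J_2$; a one-line computation with $2 \times 2$ matrices shows that $\widehat{G}$ commutes with $\mathcal{V}^*(Y \otimes I_H)\mathcal{V}$ for every $Y \in H^\infty(E,Z)$ \emph{exactly because} $G\bigl(V_1^*(Y \otimes I_{H_1})V_1\bigr) = \bigl(V_2^*(Y \otimes I_{H_2})V_2\bigr)G$, and plainly $\Vert \widehat{G} \Vert = \Vert G \Vert$.

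Next I would apply Theorem~\ref{WCL_theorem} to $\widehat{G}$, obtaining $T \in \ms{B}(\ms{F}(E) \otimes_\sigma H)$ with $T^*(J) \subseteq J$, $\mathcal{V}^* T \mathcal{V} = \widehat{G}$, $T(Y \otimes I_H) = (Y \otimes I_H)T$ for all $Y \in H^\infty(E,Z)$, and $\Vert T \Vert = \Vert \widehat{G} \Vert = \Vert G \Vert$. Define $\widetilde{G}$ to be the lower-left corner of $T$, namely $\widetilde{G} = Q_2 T \iota_1$, where $\iota_1$ is the inclusion of $\ms{F}(E) \otimes_{\sigma_1} H_1$ and $Q_2$ is the projection onto $\ms{F}(E) \otimes_{\sigma_2} H_2$. (Note $T$ itself need not be block triangular: it is only guaranteed to commute with the operators $Y \otimes I_H$, not with the whole commutant, which contains the coordinate projections.) Then property~\ref{wcl.cor.3} holds because each summand reduces $Y \otimes I_H$; property~\ref{wcl.cor.2} is read off from the $(2,1)$-block of the identity $\mathcal{V}^* T \mathcal{V} = \widehat{G}$; property~\ref{wcl.cor.1} follows from $T^*(J) \subseteq J$ together with the fact that $J = J_1 \oplus J_2$ is an \emph{orthogonal} direct sum, so the $\ms{F}(E) \otimes_{\sigma_1} H_1$-component of $T^*(J_2)$ lies in $J_1$; and property~\ref{wcl.cor.4} follows from $\Vert \widetilde{G}\Vert \le \Vert T \Vert = \Vert G \Vert$ and $\Vert \widetilde{G}\Vert \ge \Vert V_2^* \widetilde{G} V_1 \Vert = \Vert G \Vert$.

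I do not expect any real obstacle: all the analytic content sits in Theorem~\ref{WCL_theorem}, and this is a packaging argument. The points that need care are verifying that the identification $\ms{F}(E) \otimes_\sigma H \cong \bigoplus_j \ms{F}(E) \otimes_{\sigma_j} H_j$ intertwines $Y \otimes I_H$ with $\bigoplus_j(Y \otimes I_{H_j})$ and preserves the co-invariance hypothesis, and the bookkeeping involved in extracting the corner of the lift and confirming that each of the four conclusions descends — most notably that co-invariance of $J$ under $T^*$ forces $\widetilde{G}^*(J_2) \subseteq J_1$.
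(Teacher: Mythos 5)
Your argument is correct and is essentially the paper's own proof: the same Putnam trick with $\sigma = \sigma_1 \oplus \sigma_2$, the same off-diagonal block operator $\left[\begin{smallmatrix}0 & 0\\ G & 0\end{smallmatrix}\right]$ on $J_1 \oplus J_2$ fed into Theorem~\ref{WCL_theorem}, and the same extraction of the lower-left corner of the lift, with each of the four conclusions read off exactly as you describe.
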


\begin{proof}
We will use the ``Putnam trick'' to translate the two-space problem into a one-space problem where we may apply the original result, Theorem \ref{WCL_theorem}.  
We then return to the two-space setting by picking off the lower left-hand entries in the $2 \times 2$ matricial expressions for certain operators.

Let $H = H_1 \oplus H_2$. 
Let $\sigma = \sigma_1 \oplus \sigma_2: M \to \ms{B}(H)$.  
Define the induced representation spaces $K_1 = \F{E} \otimes_{\sigma_1} H_1$, $K_2 = \F{E} \otimes_{\sigma_2} H_2$, and $K = \F{E} \otimes_\sigma H$.  
We identify $K$ with $K_1 \oplus K_2$ in the usual way and let $J$ be the image of $J_1 \oplus J_2$ in $K$. 
For notational simplicity we will omit the implied isomorphisms in our computations. 
Let $V$ be the inclusion map of $J$ into $K$.  
Let $P_1, P_2$, and $P$ be the projection maps in $\ms{B}(K_1)$, $\ms{B}(K_2)$, and $\ms{B}(K)$ onto $J_1, J_2$, and $J$, respectively.  
Define
\begin{equation}
\label{corollary.8}
G_0 : = \left[ \begin{matrix} 0 & 0 \\ G & 0 \end{matrix} \right] \in \ms{B}(J).
\end{equation}
Towards applying Theorem \ref{WCL_theorem}, let us show that for all $Y \in H^\infty(E,Z)$,
\begin{align}
\label{corollary.5}
(Y^* \otimes I_H)(J) & \subseteq J \text{ and} \\
\label{corollary.7}
 G_0 (V^*(Y \otimes I_H)V) & = (V^*(Y \otimes I_H)V) G_0.
\end{align}
By hypothesis, $P_j(Y \otimes I_{H_j}) = P_j (Y \otimes I_{H_j})P_j$  for $j = 1,2$. 
Thus,
\begin{multline*}
P(Y \otimes I_H) =
\left[ \begin{matrix} P_1 & 0 \\ 0 & P_2
\end{matrix} \right] 
\left[ \begin{matrix} Y \otimes I_{H_1} & 0 \\ 0 & Y \otimes I_{H_2}
\end{matrix} \right] \\
 = \left[ \begin{matrix} P_1 & 0 \\ 0 & P_2
\end{matrix} \right]\left[ \begin{matrix} Y \otimes I_{H_1} & 0 \\ 0 & Y \otimes I_{H_2}
\end{matrix} \right]\left[ \begin{matrix} P_1 & 0 \\ 0 & P_2
\end{matrix} \right] 
= P(Y \otimes I_H)P.
\end{multline*}
Containment \eqref{corollary.5} follows.
Since $VV^* = P$ and $V^*V$ is the identity on $J$,
\begin{multline}
\label{corollary.10}
G_0 V^*(Y \otimes I_H)V 
= V^*V G_0 V^*P(Y \otimes I_H)PV \\
= V^* \left[ \begin{matrix} 0 & 0 \\ V_2 G V_1^* & 0 \end{matrix} \right] \left[ \begin{matrix} P_1 & 0 \\ 0 & P_2 \end{matrix} \right] \left[ \begin{matrix} Y \otimes I_{H_1} & 0 \\  0 & Y \otimes I_{H_2} \end{matrix} \right] \left[ \begin{matrix} P_1 & 0 \\ 0 & P_2 \end{matrix} \right] V \\
= V^*  \left[ \begin{matrix} 0 & 0 \\ V_2 G V_1^*(Y \otimes I_{H_1}) P_1 & 0 \end{matrix} \right]  V.
\end{multline} 
On the other hand, 
\begin{multline}
\label{corollary.9}
 V^*(Y \otimes I_H)V G_0 
 =V^*P(Y \otimes I_H)PV G_0 V^* V \\
 = V^*\left[ \begin{matrix} P_1 & 0 \\ 0 & P_2 \end{matrix} \right] \left[ \begin{matrix} Y \otimes I_{H_1} & 0 \\  0 & Y \otimes I_{H_2} \end{matrix} \right] \left[ \begin{matrix} P_1 & 0 \\ 0 & P_2 \end{matrix} \right] \left[ \begin{matrix} 0 & 0 \\ V_2 G V_1^* & 0 \end{matrix} \right]  V \\
 = V^*  \left[ \begin{matrix} 0 & 0 \\ P_2 (Y \otimes I_{H_2})V_2 G V_1^* & 0 \end{matrix} \right]  V.
\end{multline}
By our hypothesis, $V_2 G V_1^*(Y \otimes I_{H_1}) P_1 = P_2(Y \otimes I_{H_2}) V_2 G V_1^*$.
Equation \eqref{corollary.7} now follows from  equations \eqref{corollary.10} and \eqref{corollary.9}.

Having shown containment \eqref{corollary.5} and equation \eqref{corollary.7}, we apply Theorem \ref{WCL_theorem} and conclude that there exists $\widetilde{G_0} \in \ms{B}(K)$ such that:
\begin{enumerate}[label=(\arabic*$'$),ref=(\arabic*$'$)]
\item \label{wcl_app.1} $\widetilde{G_0}^* (J) \subseteq J$,
\item \label{wcl_app.2} $ V^*  \widetilde{G_0}  V = G_0$,
\item \label{wcl_app.3} $\widetilde{G_0}    (Y \otimes I_{H})  =  (Y \otimes I_{H}) \widetilde{G_0}$ for all $Y \in H^\infty(E,Z)$, and
\item \label{wcl_app.4} $\Vert \widetilde{G_0} \Vert = \Vert G_0 \Vert$.
\end{enumerate}  
With $A \in \ms{B}(K_1), B \in \ms{B}(K_2, K_1), C \in \ms{B}(K_2)$, and $\widetilde{G} \in \ms{B}(K_1, K_2)$ such that
\begin{equation}
\label{corollary.11}
\widetilde{G_0} = \left[ \begin{matrix} A & B \\ \widetilde{G} & C \end{matrix} \right],
\end{equation} 
let us show that $\widetilde{G}$ satisfies properties \ref{wcl.cor.1}-\ref{wcl.cor.4}.
By property \ref{wcl_app.1}, $P\widetilde{G_0} = P \widetilde{G_0} P$, so
\begin{multline*}
\left[ \begin{matrix} P_1A & P_1B  \\ P_2 \widetilde{G} & P_2C \end{matrix} \right] 
=  \left[ \begin{matrix} P_1 & 0 \\ 0 & P_2 \end{matrix} \right] \left[ \begin{matrix} A & B  \\ \widetilde{G} & C \end{matrix} \right] 
=  P \widetilde{G_0} 
= P \widetilde{G_0}  P \\
=  \left[ \begin{matrix} P_1 & 0 \\ 0 & P_2 \end{matrix} \right] \left[ \begin{matrix} A & B  \\ \widetilde{G} & C \end{matrix} \right]  \left[ \begin{matrix} P_1 & 0 \\ 0 & P_2 \end{matrix} \right] 
= \left[ \begin{matrix} P_1AP_1 & P_1BP_2  \\ P_2\widetilde{G} P_1 & P_2C P_2 \end{matrix} \right].
\end{multline*} 
Equating the lower left-hand entries, property \ref{wcl.cor.1} holds. 
It follows from property  \ref{wcl_app.2} that $P \widetilde{G_0} P = V G_0V^*$, so by similar computations, 
\begin{equation*}
\left[ \begin{matrix} P_1AP_1 & P_1BP_2 \\ P_2\widetilde{G}P_1 & P_2CP_2 \end{matrix} \right] 
=  \left[ \begin{matrix} 0 & 0 \\ V_2 G V_1^* & 0 \end{matrix} \right].
\end{equation*}  
Property \ref{wcl.cor.2} follows by equating the lower left-hand entries.
Property \ref{wcl_app.3} implies
\begin{equation*}
\left[ \begin{matrix} A(Y \otimes I_{H_1}) & B(Y \otimes I_{H_2}) \\ \widetilde{G}(Y \otimes I_{H_1})& C(Y \otimes I_{H_2}) \end{matrix} \right]  = 
\left[ \begin{matrix} (Y \otimes I_{H_1})A & (Y \otimes I_{H_1})B \\ (Y \otimes I_{H_2})\widetilde{G}& (Y \otimes I_{H_2})C \end{matrix} \right] 
\end{equation*}
for $Y \in H^\infty(E,Z)$.  
Again, equating the lower left-hand entries, we obtain property \ref{wcl.cor.3}.
Finally, it follows from equation \eqref{corollary.11} that $\Vert \widetilde{G} \Vert \leq  \Vert \widetilde{G_0} \Vert$.  
By property \ref{wcl_app.4} and equation \eqref{corollary.8},  $\Vert \widetilde{G_0} \Vert = \Vert G_0 \Vert =  \Vert G \Vert $.
Since  $\Vert G \Vert  \leq \Vert \widetilde{G} \Vert$ by property \ref{wcl.cor.2}, we conclude that  $\Vert G \Vert = \Vert \widetilde{G} \Vert$.  
This gives property \ref{wcl.cor.4} and completes the proof.  
\end{proof}

\section{The Commutant and Double Commutant}\label{dblcomm}

In this section we identify the commutant of an induced image of the weighted Hardy algebra of the dual correspondence and the double commutant of an induced image of the weighted Hardy algebra of the original correspondence.
For the remainder of the paper, we will assume that $E$ is full.  

Let $\sigma: M \to \ms{B}(H)$ be a fixed faithful, normal, unital $*$-homomor\-phism for a Hilbert space $H$, and let $Z$ be a sequence of  weights associated with $X$.
In Section 7 of  \cite{Muhly2016}, Muhly and Solel use $X$ to construct an admissible sequence for $E^\sigma$, $X' = \left\{X'_k \right\}_{k = 0}^\infty$, and $Z$ to construct a sequence of weights associated with $X'$, which we will denote by $Z' = \left\{Z'_k \right\}_{k = 0}^\infty$.
Our proof of the weighted Nevanlinna-Pick theorem will involve the commutant of the algebra $\left\{ Y \otimes I_H \mid Y \in H^\infty(E^\sigma, Z') \right\}$ in $\ms{B}(\ms{F}(E^\sigma) \otimes_\iota H)$; our next goal is to identify it with $H^\infty(E,Z)$ in a certain way.   
A note of caution is in order, for what we will call $Z'$ is referred to as $C$ in \cite{Muhly2016}.  
Let us summarize the construction of $X'$ and $Z'$, simultaneously clarifying our notation. 
For $k \in \mb{N}_0$, we shall write $\mc{A}_k$ for  $\varphi_k(M)^c$, the commutant of $\varphi_k(M)$ in $\ms{L}(\tens{E}{k})$, and   $\mc{A}'_k$ for $\varphi_k'(\sigma(M)')^c$, the commutant of the image of $\sigma(M)'$ under $\varphi_k'$ in $\ms{L}(\tens{(E^\sigma)}{k})$.
Using Lemma 7.2 of \cite{Muhly2016}, for each $k \in \mb{N}_0$ we define $X_k'$ to be the unique element in  $\mc{A}'_k$ such that 
$ U_k^{\sigma *} (X_k \otimes I_H) U_k^{\sigma} = X_k' \otimes I_H$.
For $k \in \mb{N}_0$, define
\begin{equation*}
C_k = 
\begin{cases}
I_M & \text{ if } k = 0 \\
Z^{(k)} (Z^{(k-1)} \otimes I_1)^{-1}  & \text{ if } k \geq 1
\end{cases}.
\end{equation*}
Since $C_k \in \mc{A}_k$,  we define $Z_k'$ to be unique element in $\mc{A}'_k$ such that $U_k^{\sigma *} (C_k \otimes I_H) U_k^{\sigma} =  Z_k' \otimes I_H  $ (\cite{Muhly2016}, Lemma 7.2).  
Here is where we have modified the notation used in \cite{Muhly2016}; we have interchanged the roles of $C$ and $Z'$ as they were introduced in Lemma 7.4 of \cite{Muhly2016}.
By Theorem 7.6 of \cite{Muhly2016}, $X' = \{X'_k\}_{k = 0}^\infty$ is an admissible sequence and $Z' = \{Z'_k\}_{k = 0}^\infty$ is a sequence of weights associated with $X'$.

We now repeat the construction with $E^\sigma$ replacing $E$ and $E^{\sigma \iota}$ replacing $E^\sigma$. 
For $k \in \mb{N}_0$, let $\mc{A}''_k$ denote $\varphi_{k}''(\sigma(M))^c$, the commutant of $\varphi_{k}''(\sigma(M))$ in $\ms{L}(\tens{(E^{\sigma \iota})}{k})$.
For $k \in \mb{N}_0$ define 
\begin{equation*}
C_k' : = 
\begin{cases}
I_{\sigma(M)'} & \text{ if } k = 0 \\
Z'^{(k)} (Z'^{(k-1)} \otimes I'_1)^{-1}  & \text{ if } k \geq 1
\end{cases}.
\end{equation*}
Since $X_k',C_k' \in \mc{A}'_k$, there exist unique $X_k '',Z''_k \in \mc{A}''_k$ such that $ U_k^{\iota*} (X_k' \otimes I_H)U_k^{\iota} = X_k'' \otimes I_H$ and $ U_k^{\iota*} (C_k' \otimes I_H)U_k^{\iota} = Z''_k \otimes I_H$ (\cite{Muhly2016}, Lemma 7.2).
It follows from Theorem 7.6 of \cite{Muhly2016} that $X'' = \left\{X''_k \right\}_{k = 0}^\infty$ is an admissible sequence for  $E^{\sigma \iota}$  and $Z'' = \left\{Z''_k \right\}_{k = 0}^\infty$ is a sequence of weights associated with $X''$.
The following fact will be useful in future computations.
\begin{lemma}\label{ZandC} For all $k \in \mb{N}_0$, \quad
$ U_k^{\sigma *} (Z_k \otimes I_H) U_k^{\sigma } = C_k' \otimes I_H$.
\end{lemma}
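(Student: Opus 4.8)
The plan is to phrase everything through the maps supplied by Lemma 7.2 of \cite{Muhly2016}: for each $k \in \mb{N}_0$ let $\Phi_k \colon \mc{A}_k \to \mc{A}'_k$ be the assignment uniquely determined by $U_k^{\sigma *}(S \otimes I_H) U_k^{\sigma} = \Phi_k(S) \otimes I_H$, so that by construction $X_k' = \Phi_k(X_k)$ and $Z_k' = \Phi_k(C_k)$. Since $Z_k$ is invertible and lies in $\varphi_k(M)^c = \mc{A}_k$, the lemma is exactly the identity $\Phi_k(Z_k) = C_k'$, and it is this that I would prove. The case $k = 0$ is trivial ($Z_0 = I_M$ and $C_0' = I_{\sigma(M)'}$), so take $k \geq 1$.

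First I would record two elementary factorizations obtained by unwinding the definitions of $Z^{(k)}$ and $C_k$ (and their primed analogues). Using associativity of the tensor product one checks $Z^{(k)} = Z_k\,(I_1 \otimes Z^{(k-1)})$, so that $Z_k = Z^{(k)}(I_1 \otimes Z^{(k-1)})^{-1}$; and iterating $C_j = Z^{(j)}(Z^{(j-1)} \otimes I_1)^{-1}$ downward (with base case $Z^{(1)} = Z_1 = C_1$) gives the left-nested expansion $Z^{(k)} = C_k\,(C_{k-1} \otimes I_1)(C_{k-2} \otimes I_2)\cdots(C_1 \otimes I_{k-1})$. The same two identities hold over $E^{\sigma}$ with the primed data, directly from the definitions of $Z'^{(k)}$ and $C_k'$.

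The engine of the proof is a pair of structural properties of $\Phi_k$. It is multiplicative --- immediate from the uniqueness clause of Lemma 7.2 of \cite{Muhly2016} together with unitarity of $U_k^{\sigma}$ --- and it is compatible with padding by identity operators in the \emph{order-reversing} way dictated by the formula for $\Lambda_k^{\sigma}$:
\[
\Phi_k(S \otimes I_{k-j}) = I'_{k-j} \otimes \Phi_j(S), \qquad \Phi_k(I_j \otimes T) = \Phi_{k-j}(T) \otimes I'_j,
\]
whenever $0 \leq j \leq k$, $S \in \mc{A}_j$, $T \in \mc{A}_{k-j}$. These relations I would verify by inserting the identity $U_k^{\sigma} L^H_{\xi} = \Lambda_k^{\sigma}(\xi)$ and the explicit formula for $\Lambda_k^{\sigma}$ into the equation defining $\Phi_k$ (the case $k = 2$, $j = 1$ already exhibits the whole mechanism), or by citing them as part of the constructions in Section 7 of \cite{Muhly2016}.

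The computation then closes quickly. Applying $\Phi_k$ to the left-nested expansion of $Z^{(k)}$ and using multiplicativity and the first compatibility relation gives $\Phi_k(Z^{(k)}) = \Phi_k(C_k)\prod_{i=1}^{k-1}\Phi_k(C_{k-i} \otimes I_i) = Z_k'\prod_{i=1}^{k-1}(I'_i \otimes Z_{k-i}') = Z'^{(k)}$, the last step being the definition of the weight product for the sequence $Z'$. Running the same argument at level $k-1$ yields $\Phi_{k-1}(Z^{(k-1)}) = Z'^{(k-1)}$, hence $\Phi_k(I_1 \otimes Z^{(k-1)}) = Z'^{(k-1)} \otimes I'_1$ by the second compatibility relation. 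Combining these by multiplicativity,
\[
\Phi_k(Z_k) = \Phi_k(Z^{(k)})\cdot\Phi_k(I_1 \otimes Z^{(k-1)})^{-1} = Z'^{(k)}\,(Z'^{(k-1)} \otimes I'_1)^{-1} = C_k',
\]
as desired. The one genuinely delicate point is the factor-order bookkeeping in the tensor-compatibility of $\Phi_k$: passing to the $\sigma$-dual reverses the order of tensor legs (as visible in $\Lambda_k^{\sigma}$), so padding the \emph{first} $j$ legs of $E^{\otimes k}$ corresponds to padding the \emph{last} $j$ legs of $(E^{\sigma})^{\otimes k}$, and it is exactly this reversal that turns the left-nested $C$-expansion of $Z^{(k)}$ into the (right-nested) defining product for $Z'^{(k)}$.
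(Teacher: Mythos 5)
Your proof is correct and follows essentially the same route as the paper: both rest on the factorization $Z_k = Z^{(k)}(I_1\otimes Z^{(k-1)})^{-1}$, multiplicativity of conjugation by $U_k^\sigma$, and the two identities $U_k^{\sigma*}(Z^{(k)}\otimes I_H)U_k^\sigma = Z'^{(k)}\otimes I_H$ and $U_k^{\sigma*}(I_1\otimes Z^{(k-1)}\otimes I_H)U_k^\sigma = Z'^{(k-1)}\otimes I_1'\otimes I_H$. The only difference is that the paper simply cites these identities from the discussion preceding Theorem 7.6 and from Lemma 7.2 of \cite{Muhly2016}, whereas you re-derive the first from the nested $C$-expansion of $Z^{(k)}$ together with the order-reversing padding compatibility of $\Lambda_k^\sigma$ --- a correct, slightly more self-contained version of the same argument.
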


\begin{proof}
The case when $k = 0$ follows from the fact that $Z_0$ is the identity on $M$ and $C_0'$ is the identity on $\sigma(M)'$.
Suppose $k \geq 1$.  
By the discussion preceding Theorem 7.6 in \cite{Muhly2016}, $Z'^{(k)} \otimes I_H = U_k^{\sigma *} (Z^{(k)} \otimes I_H) U_k^{\sigma }$; moreover, by Lemma 7.2 of \cite{Muhly2016}, 
$Z'^{(k-1)} \otimes I'_{1} \otimes I_H
 =  U_k^{\sigma*} \left( I_1 \otimes Z^{(k-1)} \otimes I_H \right)  U_k^{\sigma}$.
Therefore, since $Z^{(k)} \left( I_1 \otimes Z^{(k-1)} \right)^{-1}= Z_k $,
\begin{multline*}
C_{k}' \otimes I_H 
 = (Z'^{(k)} \otimes I_H )(Z'^{(k-1)} \otimes I'_{1} \otimes I_H)^{-1} \\
 =  U_k^{\sigma *}  (Z^{(k)} \otimes I_H) \left( I_1 \otimes Z^{(k-1)} \otimes I_H \right)^{-1} U_k^{\sigma } 
 =U_k^{\sigma *} (Z_{k} \otimes I_H) U_k^{\sigma},
\end{multline*} 
as desired.
\end{proof}

We already know how to naturally identify $M$ with $\sigma(M)$, $E$ with $E^{\sigma \iota}$, and $\sigma$ with $\jmath$.  
To identify $X$ with $X''$ and $Z$ with $Z''$, that is to show that 
\begin{equation}
\label{ZandS_i2} \omega_kZ_k\omega_k^* = Z''_k,  \quad \text{ and } \quad  \omega_kX_k\omega_k^*  = X_k'',
\end{equation}
note that  for all $k \in \mb{N}_0$, $U_k^\sigma U_k^\iota (\omega_k \otimes I_H)$ is the identity operator on $\tens{E}{k} \otimes_\sigma H$. 
By Lemma \ref{ZandC}, $U_k ^{\iota *} (C_k ' \otimes I_H) U_k ^{\iota } 
= U_k ^{\iota *} U_k^{\sigma *} (Z_k \otimes I_H) U_k^\sigma U_k^\iota 
= \omega_kZ_k\omega_k^* \otimes I_H$, so by the uniqueness of $Z''_k$, $\omega_kZ_k\omega_k^* = Z''_k$.  
Analogously, $\omega_kX_k\omega_k^*  = X_k''$.

\begin{lemma}\label{ZandS}  $Ad(\omega_\infty)(H^\infty(E,Z)) = H^\infty(E^{\sigma \iota}, Z'')$.
\end{lemma}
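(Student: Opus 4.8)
The plan is to exhibit $Ad(\omega_\infty)$ as an isomorphism of $\LFE$ onto $\ms{L}(\ms{F}(E^{\sigma\iota}))$ that carries each generator of $H^\infty(E,Z)$ to the corresponding generator of $H^\infty(E^{\sigma\iota},Z'')$, and then close up under ultraweak limits. First I would recall that $\omega_\infty = \sum_{k=0}^\infty \omega_k$ is a $W^*$-correspondence isomorphism $\ms{F}(E)\to\ms{F}(E^{\sigma\iota})$, so $Ad(\omega_\infty)\colon T\mapsto \omega_\infty T\omega_\infty^*$ is a normal (hence ultraweakly continuous) $*$-isomorphism of $\LFE$ onto $\ms{L}(\ms{F}(E^{\sigma\iota}))$. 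Being an ultraweak homeomorphism, it suffices to check that $Ad(\omega_\infty)$ maps the $Z$-algebraic tensor algebra $\mc{T}^0_+(E,Z)$ onto the $Z''$-algebraic tensor algebra $\mc{T}^0_+(E^{\sigma\iota},Z'')$; the claim for the ultraweak closures $H^\infty(E,Z)$ and $H^\infty(E^{\sigma\iota},Z'')$ then follows immediately.

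The generators come in two families. For the left-action generators: using the identification $\omega$ of $M$ with $\sigma(M)$, one checks $\omega_\infty \varphi_\infty(a)\omega_\infty^* = \varphi_\infty''(a)$ for $a\in M$, which is essentially the statement that $\omega_\infty$ intertwines the two left actions — this is part of the correspondence-isomorphism property of $\omega_\infty$. For the weighted creation operators: given $k\in\mb{N}$ and $\xi\in\tens{E}{k}$, I would compute $\omega_\infty W_\xi^Z \omega_\infty^*$ and show it equals $W_{\omega_k\xi}^{Z''}$. The natural route is to factor $W_\xi^Z = D_k^Z T_\xi$ and handle the two pieces separately: $\omega_\infty T_\xi \omega_\infty^* = T_{\omega_k\xi}$ holds because $\omega_\infty$ is a correspondence isomorphism and the unweighted creation operators are defined by tensoring on the left (so they transform covariantly under any correspondence isomorphism), while $\omega_\infty D_k^Z \omega_\infty^* = D_k^{Z''}$ follows from the diagonal formula $D_k^Z = \sum_{j\geq k} v_j Z^{(j,j-k)} v_j^* $ — more precisely $D_k^Z = diag[0,\dots,0,Z^{(k)},Z^{(k+1,1)},\dots]$ — together with the identities $\omega_j Z^{(j)}\omega_j^* = Z''^{(j)}$, which are bootstrapped from the entry-wise relations $\omega_k Z_k\omega_k^* = Z''_k$ established just before the lemma in \eqref{ZandS_i2}. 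One must also note $\omega_j$ intertwines $v_j$ with the corresponding isometry $v_j''$ for $\ms{F}(E^{\sigma\iota})$.

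The main obstacle is the bookkeeping in the second computation: verifying that conjugation by $\omega_\infty$ transports the full diagonal weight operator $D_k^Z$ to $D_k^{Z''}$, which requires knowing $\omega_j Z^{(j,i)}\omega_j^* = Z''^{(j,i)}$ for all relevant $i\leq j$. This in turn reduces, via the recursive definition $Z^{(j)} = Z_j(I_1\otimes Z_{j-1})\cdots(I_j\otimes Z_0)$ and the fact that $\omega_j = \omega\otimes\cdots\otimes\omega$ respects such tensor factorizations (each factor $I_i\otimes Z_{j-i}$ pushing through because $\omega$ identifies $\varphi_{j-i}(M)^c$-operators appropriately), to the base relations in \eqref{ZandS_i2}. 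Once those transport identities are in hand, the rest is routine: $Ad(\omega_\infty)$ sends each generator of $\mc{T}^0_+(E,Z)$ to a generator of $\mc{T}^0_+(E^{\sigma\iota},Z'')$ and, being a unital algebra homomorphism, maps the algebra generated by the former onto the algebra generated by the latter; applying the ultraweak-homeomorphism property closes the argument.
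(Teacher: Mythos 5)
Your proposal is correct and follows essentially the same route as the paper: both factor $W_\xi^Z = D_k^Z T_\xi$, transport $T_\xi$ to $T_{\omega_k\xi}$ via the correspondence isomorphism, and transport the diagonal weight operator using $\omega_i Z^{(i,i-k)}\omega_i^* = Z''^{(i,i-k)}$ derived from \eqref{ZandS_i2}, then conclude by ultraweak continuity of $Ad(\omega_\infty)$. The only cosmetic difference is that the paper treats the left-action generators as the $k=0$ case of the creation-operator identity rather than as a separate family.
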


\begin{proof}
It suffices to show that 
\begin{align}
\label{ZandS_i5} 
\omega_\infty W^Z_\xi \omega^*_\infty
&= W_{\omega_k(\xi)}^{Z''}, \quad \forall k \in \mb{N}, \forall \xi \in \tens{E}{k}, \quad \text{ and }  \\
\label{ZandS_i6} 
\omega_\infty \phiinf{a} \omega^*_\infty
&= \varphi''_{\infty}(\sigma(a)), \quad \forall a \in M. 
\end{align} 
By straightforward computation with equation \eqref{ZandS_i2}, 
$\omega_i Z^{(i,i-k)} \omega_i^*= Z''^{(i,i-k)}$ when  $i \geq k \geq 0$, so $\omega_\infty D_k^Z \omega_\infty^* = D_k^{Z''}$.  
Since $\omega_\infty T_\xi \omega_\infty^*= T_{\omega_k(\xi)}$, equation \eqref{ZandS_i5} follows.
Equation \eqref{ZandS_i6} is simply the case when $k = 0$.
\end{proof}

The following theorem gives two commutant results.  
The first plays a role at the crucial step of the weighted Nevanlinna-Pick theorem in the next section.  
The second is a weighted double commutant theorem.

\begin{theorem} \label{double_comm_theorem}
In the notation established,
\begin{enumerate}[label=(\arabic*),ref=(\arabic*)]
\item \label{double_comm_theorem_i1} $\left( \iota^{\ms{F}(E^\sigma)}(H^\infty(E^\sigma, Z')) \right)'  = \pi^\sigma(H^\infty(E,Z))$, and
\item \label{double_comm_theorem_i2} $\left( \sigma^{\ms{F}(E)} \left( H^\infty(E,Z) \right) \right)'' = \sigma^{\ms{F}(E)} \left( H^\infty(E,Z) \right)$.
\end{enumerate}
\end{theorem}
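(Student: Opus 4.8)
The plan is to deduce both statements from the weighted commutant lifting corollary, Corollary \ref{GenWeightedCommutantLifting}, together with the duality machinery set up in Section \ref{prelim} and the identifications $\omega_\infty$, $U_\infty^\sigma$, $U_\infty^\iota$ recalled above. For part \ref{double_comm_theorem_i1}, one inclusion is easy: since $\pi^\sigma$ and $\rho^\sigma$ are $*$-homomorphisms built by conjugation with the unitary $U_\infty^\sigma$, and the generators of $H^\infty(E,Z)$ and $H^\infty(E^\sigma,Z')$ can be checked to commute after conjugation (this is where the weighted analogue of equations (5.1)--(5.2) of \cite{Muhly2004a} is needed, i.e.\ a computation with the subdiagonal matrices for $\pi^\sigma(W_\xi)$ and $\iota^{\ms{F}(E^\sigma)}(W_\eta^{Z'})$), we get $\pi^\sigma(H^\infty(E,Z)) \subseteq \left(\iota^{\ms{F}(E^\sigma)}(H^\infty(E^\sigma,Z'))\right)'$. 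The reverse inclusion is the substantive direction. Given $T$ in the commutant of $\iota^{\ms{F}(E^\sigma)}(H^\infty(E^\sigma,Z'))$, I would transport it via $Ad(U_\infty^\sigma)$ to an operator $\widetilde{T} = \rho^\sigma$-style conjugate on $\ms{F}(E) \otimes_\sigma H$ and show $\widetilde{T}$ lies in the commutant of $\{Y \otimes I_H : Y \in H^\infty(E^{\sigma\iota}, Z'')\}$ after using the $\omega_\infty$ identification; then Corollary \ref{GenWeightedCommutantLifting} applied with $J = $ the whole space (or, more precisely, a compression argument realizing $T$ as a limit of compressions to the co-invariant subspaces $K_n$) produces an operator in $H^\infty(E,Z) \otimes I$ that maps to $T$. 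Concretely: compress $\widetilde{T}$ to the co-invariant vacuum-type subspace, apply lifting to recover a weighted-Hardy-algebra element, and check the lift actually equals $\widetilde{T}$ using that $T$ commutes with the full weighted creation operators (which act transitively enough on $\ms{F}(E^\sigma) \otimes_\iota H$ to pin down $T$ from its action on the vacuum).

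For part \ref{double_comm_theorem_i2}, the inclusion $\sigma^{\ms{F}(E)}(H^\infty(E,Z)) \subseteq \left(\sigma^{\ms{F}(E)}(H^\infty(E,Z))\right)''$ is automatic. For the reverse, the idea is to iterate the duality from part \ref{double_comm_theorem_i1}: apply \ref{double_comm_theorem_i1} with the roles of $(M,E,\sigma,Z)$ replaced by $(\sigma(M)', E^\sigma, \iota, Z')$ to identify $\left(\iota^{\ms{F}(E^{\sigma\iota})}(H^\infty(E^{\sigma\iota},Z''))\right)' = \pi^\iota(H^\infty(E^\sigma,Z'))$, and combine this with the first identification and the natural identifications $\omega_k : \tens{E}{k} \to \tens{(E^{\sigma\iota})}{k}$, $\omega_k Z_k \omega_k^* = Z_k''$ (equation \eqref{ZandS_i2}) and Lemma \ref{ZandS}. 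Unwinding the conjugations by $U_\infty^\sigma$, $U_\infty^\iota$, and $\omega_\infty$ — using $U_\infty^\sigma U_\infty^\iota(\omega_\infty \otimes I_H) = I$ from the excerpt — should show that applying the commutant operation twice to $\sigma^{\ms{F}(E)}(H^\infty(E,Z))$ lands back on itself. This is the weighted analogue of the argument for Corollary 5.4 (or the surrounding discussion) in \cite{Muhly2004a}; the bookkeeping with the double-dual identifications is routine once part \ref{double_comm_theorem_i1} is in hand, but care is needed because $\sigma$ is assumed faithful precisely so that the double dual $(E^{\sigma\iota})$ can be identified with $E$.

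I expect the main obstacle to be the reverse inclusion in part \ref{double_comm_theorem_i1}: specifically, verifying that an arbitrary element of the commutant, once transported to $\ms{F}(E)\otimes_\sigma H$, genuinely satisfies the hypotheses of Corollary \ref{GenWeightedCommutantLifting} (co-invariance of the relevant subspace and the intertwining relation), and then checking that the lift produced by the corollary reproduces the original operator rather than merely agreeing with it on a subspace. The weights complicate the matrix computations identifying $\pi^\sigma(W_\xi)$ and $\iota^{\ms{F}(E^\sigma)}(W_\eta^{Z'})$, since these involve the operators $D_k^Z$ and $D_k^{Z'}$ and the isomorphisms $\Lambda_k^\sigma$; establishing the analogue of \cite{Muhly2004a}'s equations (5.1)--(5.2) in the weighted setting — essentially describing how $\pi^\sigma$ acts on weighted generators — is the technical heart, and everything else follows the unweighted template of \cite{Muhly2004a} with Corollary \ref{GenWeightedCommutantLifting} substituted for the unweighted commutant lifting theorem.
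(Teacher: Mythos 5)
There is a genuine gap in your argument for part \ref{double_comm_theorem_i1}, and it is a structural one. The paper does not use commutant lifting here at all: it simply cites Theorem 7.7 of \cite{Muhly2016}, which already establishes the weighted commutant theorem $\left( \sigma^{\F{E}} ( H^\infty (E, Z) )  \right)' = \rho^\sigma ( H^\infty (E^\sigma, Z' ) )$, applies it a second time with $(M,E,\sigma,Z)$ replaced by $(\sigma(M)', E^\sigma, \iota, Z')$, and then unwinds the identifications $\rho^\iota \circ Ad(\omega_\infty) = \pi^\sigma$, $Ad(U_\infty^\sigma)\circ \pi^\sigma = \sigma^{\ms{F}(E)}$, together with Lemma \ref{ZandS}. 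Your proposal tries instead to manufacture the hard inclusion of part \ref{double_comm_theorem_i1} out of Corollary \ref{GenWeightedCommutantLifting}, but that corollary cannot deliver it: given $T$ in $\left(\iota^{\ms{F}(E^\sigma)}(H^\infty(E^\sigma,Z'))\right)'$, compressing to a co-invariant subspace and lifting only returns an operator satisfying property \ref{wcl.cor.3}, i.e.\ an element of the commutant of the induced algebra you fed in --- it does not return an element of ``$H^\infty(E,Z)\otimes I$'' or of $\pi^\sigma(H^\infty(E,Z))$. Asserting that the lift lies in $\pi^\sigma(H^\infty(E,Z))$ presupposes exactly the identification of that commutant which part \ref{double_comm_theorem_i1} claims; the argument is circular. (Indeed, in the paper the logical arrow points the other way: Corollary \ref{double_comm_corollary}, a matricial form of part \ref{double_comm_theorem_i1}, is what converts the output of commutant lifting into a Hardy-algebra element in the proof of Theorem \ref{WeightedNevanlinnaPick}.) The vacuum-vector argument you sketch at the end --- that $T$ is pinned down by its action on $\widehat{I_H}\otimes H$ and that the resulting symbol gives a bounded operator lying in the ultraweak closure of the weighted polynomials --- is essentially a direct proof of Theorem 7.7 of \cite{Muhly2016}; if you want to avoid citing that theorem you must carry out that analysis explicitly, and commutant lifting does not substitute for it.

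Your part \ref{double_comm_theorem_i2} is essentially the paper's argument: iterate the duality, use $U_\infty^\sigma U_\infty^\iota(\omega_\infty \otimes I_H) = I_{\F{E}\otimes_\sigma H}$, equation \eqref{ZandS_i2}, and Lemma \ref{ZandS} to identify $(E^{\sigma\iota}, Z'')$ with $(E,Z)$, and conclude by taking commutants twice. That part is fine once part \ref{double_comm_theorem_i1} (equivalently, Theorem 7.7 of \cite{Muhly2016} applied to both $E$ and $E^\sigma$) is legitimately in hand, but as written it inherits the gap above.
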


\begin{proof}
By Theorem 7.7 in \cite{Muhly2016}, 
\begin{equation}
\label{double_comm_theorem_e1} \left( \sigma^{\F{E}} \left( H^\infty \left(E, Z \right) \right)  \right)' 
 = \rho^\sigma \left( H^\infty \left(E^\sigma, Z' \right) \right).
\end{equation}
That same theorem, now applied to $E^\sigma$, gives 
$\left( \iota^{\F{E^\sigma}} \left( H^\infty \left(E^\sigma, Z' \right) \right)  \right)' 
= \rho^\iota \left( H^\infty\left(E^{\sigma \iota}, Z'' \right)  \right)$.
Now $\rho^\iota \circ Ad(\omega_\infty) = \pi^\sigma $ since $U_\infty^\sigma U_\infty^\iota \left(\omega_\infty \otimes I_H \right)$ is the identity operator on $\F{E} \otimes_\sigma H$, so by  Lemma \ref{ZandS},
\begin{equation*}
 \left( \iota^{\F{E^\sigma}} \left( H^\infty \left(E^\sigma, Z' \right) \right)  \right)' 
=  \rho^\iota \left( Ad(\omega_\infty)  \left( H^\infty(E,Z) \right)  \right) = \pi^\sigma \left( H^\infty(E,Z) \right),
\end{equation*}
which gives part \ref{double_comm_theorem_i1}.  
Since $\rho^\sigma = Ad(U_\infty^\sigma) \circ \iota^{\ms{F}(E^\sigma)}$, part \ref{double_comm_theorem_i1} now implies that 
\begin{align*}
\big( \rho^\sigma \left( H^\infty \left(E^\sigma, Z' \right) \right) \big)'
& = Ad(U_\infty^\sigma) \left(  \left( \iota^{\F{E^\sigma}} \left( H^\infty \left(E^\sigma, Z' \right) \right) \right)' \right) \\
& = Ad(U_\infty^\sigma) \left( \pi^\sigma \left( H^\infty(E,Z) \right) \right).
\end{align*}  
Since $Ad(U_\infty^\sigma) \circ \pi^\sigma =  \sigma^{\ms{F}(E)}$  equation \eqref{double_comm_theorem_e1} gives
\begin{multline*}
\left( \sigma^{\ms{F}(E)} \left( H^\infty(E,Z) \right) \right)'' 
= \big( \rho^\sigma \left( H^\infty \left(E^\sigma, Z' \right) \right) \big)' \\
= Ad(U_\infty^\sigma) \left( \pi^\sigma \left( H^\infty(E,Z) \right) \right) 
= \sigma^{\ms{F}(E)} \left( H^\infty(E,Z) \right),
\end{multline*}
which completes the proof of part \ref{double_comm_theorem_i2}.
\end{proof}

The following corollary to Theorem \ref{double_comm_theorem}\ref{double_comm_theorem_i1} will be of use in the next section.
For $s, t \in \mb{N}$, $H^{(s)}$ denotes the direct sum of $s$ copies of $H$, $M_{s \times t}(\ms{B}(H))$ denotes the collection of $s \times t$ matrices with entries in $\ms{B}(H)$, identified with $\ms{B}(H^{(t)}, H^{(s)})$ in the usual way, and $M_s(\ms{B}(H))$ denotes the collection of $s \times s$ matrices with entries in $\ms{B}(H)$.  
We define $\iota^{(s)}: \sigma(M)' \to M_s(\ms{B}(H))$ to be the direct sum of $s$ copies of  $\iota$.
Define $\pi^\sigma_{s \times t}: M_{s \times t}\left(\ms{L}(\F{E}) \right) \to M_{s \times t} \left( \ms{B}(\F{E^\sigma} \otimes_\iota H) \right)$ so that 
\begin{equation*}
\pi^\sigma_{s \times t} \left( \left. [ Y_{af} ]_{a = 1}^s \right. _{f = 1}^t \right) = \left. [ \pi^\sigma(Y_{af}) ]_{a = 1}^s \right. _{f = 1}^t.
\end{equation*} 
We identify, as usual, $\F{E^\sigma} \otimes_{\iota^{(s)}} H^{(s)}$ with $ \left(\F{E^\sigma} \otimes_{\iota} H \right)^{(s)}$.

\begin{corollary} \label{double_comm_corollary} 
Let $s, t \in \mb{N}$.  
The image of $M_{s \times t}\left(H^\infty \left(E, Z \right) \right)$ under 
$\pi^\sigma_{s \times t}$ is precisely the collection of elements $A \in M_{s \times t} \left( \ms{B}(\F{E^\sigma} \otimes_\iota H) \right)$ such that for every $S \in H^\infty \left(E^\sigma, Z' \right)$,
\begin{equation*}
 A \circ  (\iota^{(t)})^{\F{E^\sigma}} (S) =  (\iota^{(s)})^{\F{E^\sigma}} (S)   \circ A .
\end{equation*}
\end{corollary}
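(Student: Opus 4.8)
The plan is to reduce the corollary to Theorem \ref{double_comm_theorem}\ref{double_comm_theorem_i1} by a routine amplification argument. The first step is to record the standard identification of induced representation spaces: under the natural isomorphism $\ms{F}(E^\sigma) \otimes_{\iota^{(t)}} H^{(t)} \cong \left( \ms{F}(E^\sigma) \otimes_\iota H \right)^{(t)}$, the operator $(\iota^{(t)})^{\ms{F}(E^\sigma)}(S)$ is carried to the block-diagonal operator with $\iota^{\ms{F}(E^\sigma)}(S)$ repeated $t$ times along the diagonal, and likewise with $s$ in place of $t$. This is immediate from the definition of the induced representation of $\ms{L}(\ms{F}(E^\sigma))$ along a direct-sum homomorphism, since $\iota^{(s)}$ and $\iota^{(t)}$ are by construction direct sums of copies of $\iota$ and tensoring over a direct sum distributes over the summands.

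Granting this, the second step is to expand the $(a,f)$ entry of each side of the displayed intertwining relation. Writing $A = [A_{af}]$ with $A_{af} \in \ms{B}(\ms{F}(E^\sigma) \otimes_\iota H)$, the relation $A \circ (\iota^{(t)})^{\ms{F}(E^\sigma)}(S) = (\iota^{(s)})^{\ms{F}(E^\sigma)}(S) \circ A$ becomes, entry by entry, $A_{af} \circ \iota^{\ms{F}(E^\sigma)}(S) = \iota^{\ms{F}(E^\sigma)}(S) \circ A_{af}$ for every $a$, every $f$, and every $S \in H^\infty(E^\sigma, Z')$. Hence the set of $A$ described in the corollary is exactly $M_{s \times t}\left( \mc{N}' \right)$, where $\mc{N} := \iota^{\ms{F}(E^\sigma)}\left( H^\infty(E^\sigma, Z') \right) \subseteq \ms{B}(\ms{F}(E^\sigma) \otimes_\iota H)$ and $\mc{N}'$ denotes its commutant.

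The third step invokes Theorem \ref{double_comm_theorem}\ref{double_comm_theorem_i1}, which says precisely that $\mc{N}' = \pi^\sigma\left( H^\infty(E,Z) \right)$. Finally, since $\pi^\sigma_{s \times t}$ is defined entrywise and the entries of a general element of $M_{s \times t}\left( H^\infty(E,Z) \right)$ range independently over $H^\infty(E,Z)$, the image of $M_{s \times t}\left( H^\infty(E,Z) \right)$ under $\pi^\sigma_{s \times t}$ is $M_{s \times t}\left( \pi^\sigma(H^\infty(E,Z)) \right) = M_{s \times t}(\mc{N}')$, which is exactly the set identified in the previous step. The only point that calls for any care — and thus the mild ``main obstacle'' — is the bookkeeping in the first step: one must check that the chosen Hilbert space isomorphism genuinely intertwines $(\iota^{(t)})^{\ms{F}(E^\sigma)}$ with the $t$-fold amplification of $\iota^{\ms{F}(E^\sigma)}$, so that ``block-diagonal'' is literally correct and the entrywise computation is valid; everything after that is formal.
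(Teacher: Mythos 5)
Your proposal is correct, and it is exactly the argument the paper intends: the paper states this as an unproved corollary of Theorem \ref{double_comm_theorem}\ref{double_comm_theorem_i1}, having already set up the identification of $\F{E^\sigma} \otimes_{\iota^{(s)}} H^{(s)}$ with $\left(\F{E^\sigma} \otimes_{\iota} H \right)^{(s)}$ that makes the amplified induced representation block-diagonal and the entrywise reduction to $\mc{N}' = \pi^\sigma(H^\infty(E,Z))$ immediate. No gaps.
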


\section{Weighted Nevanlinna-Pick Interpolation} \label{wnpChap}

In this section we give our primary theorem, a weighted $W^*$-version of the classic Nevanlinna-Pick interpolation theorem.  
While the key component of its proof, the weighted commutant lifting theorem, is already at our disposal, we begin this section with a series of technical lemmas that further facilitate the proof.  
Let $\sigma: M \to \ms{B}(H)$ be a fixed faithful, normal, unital $*$-homomorphism for a Hilbert space $H$, and let $Z$ be a sequence of  weights associated with $X$.

Let us establish some notation.  
If $\mf{z} \in \mb{D}(X, \sigma)$, then the map 
$\Phi_\mf{z}: \sigma(M)' \to \sigma(M)'$, defined for $A \in \sigma(M)'$ by 
$\Phi_\mf{z}(A) = \sum_{k = 1}^\infty \mf{z}^{(k)} (X_k \otimes A) \mf{z}^{(k)*}$
is completely positive, linear, and ultraweakly continuous by Lemma 4.4 of \cite{Muhly2016};  in addition,  $\Vert \Phi_\mf{z} \Vert < 1$.
Therefore, in the Banach algebra of completely bounded maps on $\sigma(M)'$, $\sum_{j = 0}^\infty \Phi_{\mf{z}}^j$ converges in norm.  
In fact, 
\begin{equation}
\label{Phi_z_lemma}
\sum_{j = 0 }^\infty \Phi_\mf{z}^j (A) 
 = \sum_{k = 0 }^\infty \mf{z}^{(k)} (R_k^2 \otimes A) \mf{z}^{(k)*}, \qquad A \in \sigma(M)'
\end{equation}
by Theorem 4.5 of \cite{Muhly2016}, where the summation on the left-hand side of the equation is with respect to the norm topology on $\sigma(M)'$, and the summation on the right-hand side is with respect to the ultraweak topology on  $\sigma(M)'$.

Let $\mf{z} \in \mc{I}(\sigma^E \circ \varphi, \sigma)$.  
For $k \in \mb{N}_0$,  $\left( \mcZ{k} \right)^*$ belongs to  $\varphi_{k}(M)^c$.  
Thus $\left( \left( \mcZ{k} \right)^* \otimes I_H \right) \mf{z}^{(k)*}$ belongs to $\left(\tens{E}{k} \right)^\sigma$, and we define $c^Z_\mf{z}(k)= c_\mf{z}(k)$ to be its image in $\tens{(E^\sigma)}{k}$ under the isomorphism $\Lambda_k^{\sigma *}$.  
For $\mf{w,z} \in \mb{D}(X, \sigma)$ and $A \in \sigma(M)'$, we have
$\left\langle c_\mf{w}(k), A \cdot c_\mf{z}(k) \right\rangle
= \mf{w}^{(k)} (R_k^2 \otimes A) \mf{z}^{(k)*}$.
Define  the \emph{$Z$-Cauchy kernel at $\mf{z}$} to be the tuple,
\begin{equation*} 
c_\mf{z}^Z = c_\mf{z}= \left( c_\mf{z}(k) \right)_{k = 0}^\infty.
\end{equation*}

\begin{lemma}\label{weightedCauchy} 
If $\mf{z} \in \mb{D}(X, \sigma)$, then $c_\mf{z} \in \ms{F}(E^\sigma)$.
\end{lemma}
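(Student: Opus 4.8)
The plan is to verify directly that the tuple $c_\mf{z}=(c_\mf{z}(k))_{k=0}^\infty$ satisfies the defining condition for membership in the ultraweak direct sum $\ms{F}(E^\sigma)=\sum_{k=0}^\infty\oplus\tens{(E^\sigma)}{k}$: since each $\langle c_\mf{z}(k),c_\mf{z}(k)\rangle$ is a positive element of the $W^*$-algebra $\sigma(M)'$, it suffices to show that the increasing net of partial sums $\sum_{k=0}^N\langle c_\mf{z}(k),c_\mf{z}(k)\rangle$ is norm-bounded; it then converges ultraweakly in $\sigma(M)'$, which is precisely what it means for $c_\mf{z}$ to lie in $\ms{F}(E^\sigma)$.

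First I would evaluate $\langle c_\mf{z}(k),c_\mf{z}(k)\rangle$ by means of the inner-product identity recorded just before the lemma: specializing it to $\mf{w}=\mf{z}$ and $A=I_H$ gives $\langle c_\mf{z}(k),c_\mf{z}(k)\rangle=\mf{z}^{(k)}(R_k^2\otimes I_H)\mf{z}^{(k)*}$. Then I would appeal to equation \eqref{Phi_z_lemma} with $A=I_H$, which identifies $\sum_{k=0}^\infty\mf{z}^{(k)}(R_k^2\otimes I_H)\mf{z}^{(k)*}$ (an ultraweak sum) with $\sum_{j=0}^\infty\Phi_\mf{z}^j(I_H)$ (a norm-convergent sum, since $\Vert\Phi_\mf{z}\Vert<1$). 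In particular the value at $I_H$ of the norm-convergent series is a bounded positive operator in $\sigma(M)'$ that dominates every partial sum $\sum_{k=0}^N\mf{z}^{(k)}(R_k^2\otimes I_H)\mf{z}^{(k)*}=\sum_{k=0}^N\langle c_\mf{z}(k),c_\mf{z}(k)\rangle$, so the required uniform bound holds and $c_\mf{z}\in\ms{F}(E^\sigma)$.

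The proof is essentially a matter of reading off the right specialization of already-established facts, so there is no serious obstacle; the one point to keep straight is the interplay of the two topologies in \eqref{Phi_z_lemma} — norm convergence of $\sum_j\Phi_\mf{z}^j(I_H)$ on one side and ultraweak convergence of $\sum_k\mf{z}^{(k)}(R_k^2\otimes I_H)\mf{z}^{(k)*}$ on the other — which must be combined with the positivity of the summands to conclude that the partial sums over $k$ are uniformly bounded, and hence that $c_\mf{z}$ defines a genuine element of the Fock module rather than merely a formal tuple.
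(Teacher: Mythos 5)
Your proof is correct and follows essentially the same route as the paper's: both compute $\langle c_\mf{z}(k),c_\mf{z}(k)\rangle=\mf{z}^{(k)}(R_k^2\otimes I_H)\mf{z}^{(k)*}$ and invoke equation \eqref{Phi_z_lemma} to bound the partial sums by the convergent total, concluding membership in the ultraweak direct sum. No gaps; your extra remark on the interplay of the two topologies is a fair point but does not change the argument.
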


\begin{proof} 
By equation \eqref{Phi_z_lemma}, $\sum_{k = 0 }^\infty \mf{z}^{(k)} (R_k^2 \otimes I_H) \mf{z}^{(k)*}$ converges in $\sigma(M)'$, so for any $N \in \mb{N}$,
$\sum_{k = 0}^N \left\langle c_\mf{z}(k), c_\mf{z}(k) \right\rangle
= \sum_{k = 0}^N \mf{z}^{(k)} (R_k^2 \otimes I_H) \mf{z}^{(k)*} 
\leq \sum_{k = 0 }^\infty \mf{z}^{(k)} (R_k^2 \otimes I_H) \mf{z}^{(k)*}$.
Since $c_\mf{z}(k) \in \tens{(E^\sigma)}{k}$ for each $k \in \mb{N}_0$, the result follows.
\end{proof}

Let $\mf{w,z} \in \mb{D}(X, \sigma)$.  
We define $\ms{K}(\mf{w},\mf{z}): \sigma(M)' \to \ms{B}(H)$  by
\begin{equation*}
\ms{K}(\mf{w},\mf{z})(A) 
= \left\langle c_\mf{w}, A \cdot c_\mf{z} \right\rangle
= \sum_{k = 0}^\infty \mf{w}^{(k)} (R_k^2 \otimes A) \mf{z}^{(k)*}, \quad A \in \sigma(M)'.
\end{equation*} 
For $s \in \mb{N}$, let $\ms{K}^{(s)}(\mf{w}, \mf{z}):  \sigma(M)' \to M_s(\ms{B}(H))$ be the map defined at $A \in \sigma(M)'$ by $\ms{K}^{(s)}(\mf{w}, \mf{z})(A) = \iota^{(s)}(\ms{K}(\mf{w}, \mf{z})(A))$ where, as in the preceding section, $\iota^{(s)}$ is the direct sum of $s$ copies of  $\iota$.

\begin{remark}
While we will not explore the consequences here, it is readily shown that 
$\ms{K}: \mb{D}(X, \sigma) \times \mb{D}(X, \sigma) \to CB_* (\sigma(M)', \ms{B}(H))$ is a ``normal completely positive kernel'' in the sense of \cite{Barreto2004} and \cite{Meyer2010}, independent of the choice of $Z$, where $CB_* (\sigma(M)', \ms{B}(H))$ denotes the completely bounded ultraweakly continuous maps from $\sigma(M)'$ to $\ms{B}(H)$.  
As such, by Proposition 41 of  \cite{Meyer2010}, $\ms{K}$ has an associated ``Reproducing Kernel $W^*$-Correspondence'', a $W^*$-analogue of the classical Reproducing Kernel Hilbert Space; in fact $\ms{K}$, which we may refer to  as the \emph{$(X, \sigma)$-Szeg\"{o} kernel},  is a weighted $W^*$-version of the classic Szeg\"{o} kernel.
\end{remark}

Recall that $\widehat{I_H} = \left( \delta_{i = 0} \ I_H \right)_{i = 0}^\infty$ belongs to $\ms{F}(E^\sigma)$ since $I_H$ is the multiplicative identity of $\sigma(M)' = \tens{(E^\sigma)}{0}$.
If $\mf{z} \in \mb{D}(X, \sigma)$, then  $c_\mf{z}$ belongs to $\ms{F}(E^\sigma)$ by Lemma \ref{weightedCauchy}.  
We write $L_{I}$ and $L_\mf{z}$ for the insertion operators $L^H_{\widehat{I_H}}$ and $L^H_{c_\mf{z}}$, respectively.

\begin{lemma}\label{rhofacts2}
For $\mf{z} \in \mb{D}(X, \sigma)$ and $Y \in H^\infty(E, Z)$,
\begin{enumerate}[label=(\arabic*),ref=(\arabic*)]
\item \label{rhofacts2_i1} $L_{\mf{z}}^*  \pi^\sigma(Y)  L_{I}  L_{\mf{z}}^* = L_{\mf{z}}^*  \pi^\sigma(Y)$,
\item \label{rhofacts2_i2} $L_{\mf{z}}^*  \pi^\sigma(Y)  L_{I} = \widehat{Y}(\mf{z})$, and
\item \label{rhofacts2_i3} $L_{\mf{z}}^* \pi^\sigma(Y) = \widehat{Y}(\mf{z})  L_{\mf{z}}^*$.
\end{enumerate}
\end{lemma}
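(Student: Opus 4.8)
The plan is to prove the three identities by first establishing the ``fundamental'' computation that $L_\mf{z}^* \pi^\sigma(W_\xi) = \mf{z}^{(k)} L_\xi^{(?)} L_\mf{z}^*$-type relations on the generators of $H^\infty(E,Z)$, then bootstrap to all of $H^\infty(E,Z)$ by linearity and ultraweak continuity, and finally deduce \ref{rhofacts2_i1} and \ref{rhofacts2_i2} as consequences of \ref{rhofacts2_i3} together with the fact that $L_\mf{z}^* L_I = \mathrm{id}$ on $H$ (since $\langle c_\mf{z}, \widehat{I_H}\rangle = \langle c_\mf{z}(0), I_H\rangle = \mf{z}^{(0)}(R_0^2 \otimes I_H)\mf{z}^{(0)*} = I_H$, recalling $R_0 = I_M$ and $\mf{z}^{(0)} = I_H$). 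Indeed, given \ref{rhofacts2_i3}, we get $L_\mf{z}^*\pi^\sigma(Y) L_I = \widehat{Y}(\mf{z}) L_\mf{z}^* L_I = \widehat{Y}(\mf{z})$, which is \ref{rhofacts2_i2}; and $L_\mf{z}^*\pi^\sigma(Y) L_I L_\mf{z}^* = \widehat{Y}(\mf{z}) L_\mf{z}^* = L_\mf{z}^*\pi^\sigma(Y)$, which is \ref{rhofacts2_i1}. So the entire content is \ref{rhofacts2_i3}.

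For \ref{rhofacts2_i3}, I would first reduce to checking the identity on generators: both sides are linear and ultraweakly-to-weak-operator continuous in $Y$ (the map $Y \mapsto \pi^\sigma(Y)$ is a normal $*$-homomorphism, $L_\mf{z}^*$ and $L_\mf{z}$ are bounded, and $Y \mapsto \widehat{Y}(\mf{z}) = (\sigma\times\mf{z})(Y)$ is ultraweakly continuous by Corollary 5.9 of \cite{Muhly2016}), so it suffices to verify it for $Y = \varphi_\infty(a)$, $a \in M$, and for $Y = W_\xi$, $\xi \in E^{\otimes k}$, $k \geq 1$, and then extend by multiplicativity of all three maps involved (using that $W_\xi W_\eta = W_{\xi\otimes\eta}$ and $\pi^\sigma$, $\sigma\times\mf{z}$ are multiplicative). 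For $Y = \varphi_\infty(a)$: using $\pi^\sigma(\varphi_\infty(a)) = I'_\infty \otimes \sigma(a)$ from the duality subsection and the fact that $c_\mf{z}$ transforms correctly under the left action $\varphi'$ of $\sigma(M)'$ (one needs to track how $\Lambda_k^{\sigma*}$ and the left $\sigma(M)'$-action interact — here $\sigma(a) \in \sigma(M)$ acts on $H$ and commutes appropriately), a direct computation should give $L_\mf{z}^*(I'_\infty \otimes \sigma(a)) = \sigma(a) L_\mf{z}^*$, matching $\widehat{\varphi_\infty(a)}(\mf{z}) = (\sigma\times\mf{z})(\varphi_\infty(a)) = \sigma(a)$. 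For $Y = W_\xi$ with $\xi \in E^{\otimes k}$: this is the crux. Using the subdiagonal matrix for $\pi^\sigma(T_\xi)$ displayed in the excerpt (and the analogous one for $\pi^\sigma(W_\xi)$, obtained by composing with the diagonal weight operator), compute $L_\mf{z}^*\pi^\sigma(W_\xi)(\widehat{\eta}\otimes h)$ for simple tensors $\eta \in E^{\otimes j}$, $h \in H$, and compare with $\widehat{W_\xi}(\mf{z}) L_\mf{z}^*(\widehat{\eta}\otimes h) = \mf{z}^{(k)}L_\xi \,\langle c_\mf{z}(j), \eta\rangle\, h$. Both sides should collapse, via the identities $Z^{(k)*}Z^{(k)} = R_k^{-2}$, the definition $c_\mf{z}(k) = \Lambda_k^{\sigma*}((Z^{(k)*}\otimes I_H)\mf{z}^{(k)*})$, the formula $\langle c_\mf{w}(k), A\cdot c_\mf{z}(k)\rangle = \mf{w}^{(k)}(R_k^2\otimes A)\mf{z}^{(k)*}$, and the tensor-power relation $\mf{z}^{(k+j)} = \mf{z}^{(k)}(I_k \otimes \mf{z}^{(j)})$, to the common value involving $\mf{z}^{(k+j)}$ applied to $\xi\otimes\eta\otimes h$ after incorporating the weight $Z^{(j)}$.

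The main obstacle I expect is the bookkeeping in the $W_\xi$ case: one must carefully match the weighted creation operator's action (which inserts $\xi$ and multiplies by the weight products $Z^{(i+k,i)}$ across Fock levels) against the structure of $c_\mf{z}$ (which is built from $\mf{z}^{(k)*}$ conjugated by $Z^{(k)*}$ and then pushed through $\Lambda_k^\sigma$), all while correctly threading the isomorphism $U_\infty^\sigma$ implicit in the definition of $\pi^\sigma$. The key cancellation is that the weight $Z^{(j)}$ appearing from $\pi^\sigma(W_\xi)$ acting at level $j$ exactly pairs against the $Z^{(j)*}$ inside $c_\mf{z}(j)$ to produce $R_j^2$, which is precisely what makes $\langle c_\mf{z}(j), \eta\rangle$-type pairings reassemble into the intertwined expression $\mf{z}^{(k)}L_\xi$ composed with the lower-level pairing. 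Once this single generator computation is done cleanly, the rest is the routine extension-by-continuity-and-multiplicativity argument, and \ref{rhofacts2_i1}, \ref{rhofacts2_i2} follow immediately as noted above.
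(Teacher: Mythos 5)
Your outline is correct and, in substance, is the paper's proof: everything reduces to a direct computation on the generators $\varphi_\infty(a)$ and $W_\xi$ (degree one suffices, since $W_{\xi\otimes\eta}=W_\xi W_\eta$), followed by extension via multiplicativity and ultraweak continuity. The only real difference is logical ordering: you make part (3) primary and deduce (1) and (2) from it via $L_\mf{z}^*L_I=\langle c_\mf{z}(0),I_H\rangle=I_H$, whereas the paper proves (1) and (2) on generators, uses (1) to show that $\tau(Y)=L_\mf{z}^*\pi^\sigma(Y)L_I$ is multiplicative and hence equals $\sigma\times\mf{z}$, and only then derives (3); on a generator $W_\xi$ the paper's (1)+(2) is literally your (3) written as an equality of row matrices, so the two orderings are interchangeable. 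One caution about the crux you left unverified: the cancellation is not ``$Z^{(j)}$ against $Z^{(j)*}$ producing $R_j^2$.'' What actually happens is that the $(j+1,j)$ entry of $\pi^\sigma(W_\xi)$ contributes $C'_{j+1}\otimes I_H=U_{j+1}^{\sigma*}(Z_{j+1}\otimes I_H)U_{j+1}^{\sigma}$ (a single weight), and this meets the $(Z^{(j+1)})^{-1}$ hidden in $c_\mf{z}(j+1)$ through $L_{j+1}^*U_{j+1}^{\sigma*}=\mf{z}^{(j+1)}\left((Z^{(j+1)})^{-1}\otimes I_H\right)$; the identity $(Z^{(j+1)})^{-1}Z_{j+1}=I_1\otimes (Z^{(j)})^{-1}$ then reassembles $c_\mf{z}(j)$ one Fock level down, yielding $L_{j+1}^*(C'_{j+1}\otimes I_H)(I'_j\otimes\omega\xi)=\mf{z}L_\xi L_j^*$. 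The identity $Z^{(k)*}Z^{(k)}=R_k^{-2}$ plays no role in this lemma; it enters only in the kernel formula $\langle c_\mf{w}(k),A\cdot c_\mf{z}(k)\rangle=\mf{w}^{(k)}(R_k^2\otimes A)\mf{z}^{(k)*}$. With that correction, your plan goes through.
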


\begin{proof}
Let us show that parts \ref{rhofacts2_i1} and \ref{rhofacts2_i2} hold when $Y = \varphi_\infty(a)$ when $a \in M$.
Since $\pi^\sigma(\phiinf{a}) = I'_\infty \otimes \sigma(a)$, $\langle c_\mf{z}, \widehat{I_H} \rangle = I_H$, and $\sigma(a)
=( \sigma \times \mf{z}) (\phiinf{a})$, 
\begin{multline*}
L_{\mf{z}}^* \pi^\sigma (\phiinf{a}) L_{I} 
= L_{\mf{z}}^* \left( I'_\infty \otimes \sigma(a) 
\right) L_{I} 
= L_{\mf{z}}^*  L_{I}  \sigma(a) \\
= \iota  \langle c_\mf{z}, \widehat{I_H} \rangle  \sigma(a) 
= \sigma(a)
=( \sigma \times \mf{z}) (\phiinf{a}),
\end{multline*} 
which gives part \ref{rhofacts2_i2}.  
Part \ref{rhofacts2_i1} now follows from the fact that if $\eta \in \F{E^\sigma}$ and $x \in H$, then $L_{\mf{z}}^* L_{\eta} = \langle c_\mf{z}, \eta \rangle$ belongs to $\sigma(M)'$, so using part \ref{rhofacts2_i2},
\begin{multline*}
L_{\mf{z}}^* \pi^\sigma (\phiinf{a}) L_{I}   L_{\mf{z}}^* (\eta \otimes x)
= \sigma(a) L_{\mf{z}}^* L_{\eta}x
= L_{\mf{z}}^* L_{\eta}\sigma(a)x \\
=  L_{\mf{z}}^*(I'_\infty \otimes \sigma(a)) L_\eta x
=  L_{\mf{z}}^* \pi^\sigma(\phiinf{a})( \eta \otimes x).
\end{multline*}

Now we show that parts \ref{rhofacts2_i1} and \ref{rhofacts2_i2} hold for $Y = W_\xi$ when $\xi \in E$.   
The $i^{th}$ diagonal entry of the diagonal operator $\pi^\sigma(D_1)$ is zero when $i = 0$ and is otherwise $U_i^{\sigma*}( Z_i \otimes I_H)U_i^{\sigma} = C'_i \otimes I_H$  (Lemma \ref{ZandC}).
It follows that
\begin{align}
\notag
\pi^\sigma(W_\xi) 
&= \pi^\sigma(D_1) \pi^\sigma(T_\xi) \\
\label{rhofacts1_e-2}
& = \left[ 
\begin{matrix}
0 & 0   & 0 & \\ 
(C_1' \otimes I_H)\omega\xi & 0 & 0 &  \\ 
0 & (C_2' \otimes I_H)(I'_{1} \otimes \omega \xi) & 0 &  \\ 
0 & 0 & (C_3' \otimes I_H)(I'_{2} \otimes \omega \xi) & \ddots \\ 
&  & \ddots & \ddots
\end{matrix}
\right].
\end{align}
If $L_i$ denotes the insertion operator $L^H_{c_\mf{z}(i)}$ for each $i \in \mb{N}_0$, then matricially $L_\mf{z}$ is the column $[ L_i ]_{i = 0}^\infty$; hence
$L_{\mf{z}}^*  \pi^\sigma(W_\xi)  L_{I}
 = L_1^*(C'_1 \otimes I_H) (\omega \xi)$.
By Lemma \ref{ZandC}, $C'_1 \otimes I_H  =  U_1^{\sigma *}  (Z_1 \otimes I_H) U_1^\sigma$.  
Since $U_1^\sigma \circ ( \omega \xi ) = L_\xi$ and $U_1^\sigma(c_\mf{z}(1) \otimes y) = c_\mf{z}(1)  (y)$, when $x, y \in H$,
\begin{multline*}
\left\langle L_{\mf{z}}^*  \pi^\sigma(W_\xi)  L_{I} x, y \right\rangle
= \left\langle (C'_1 \otimes I_H) (\omega \xi )x, c_\mf{z}(1) \otimes y\right\rangle \\
= \left\langle (Z_1 \otimes I_H) L_\xi x, \left( (Z_1^{-1})^* \otimes I_H \right) \mf{z}^{*} y \right\rangle \\
= \left\langle \mf{z} L_\xi x, y \right\rangle
= \left\langle (\sigma \times \mf{z})(W_\xi) x, y \right\rangle.
\end{multline*}
We deduce that part \ref{rhofacts2_i2} holds for $Y = W_\xi$.
Let us compare the row matrices for the operators on either side of the equality in part  \ref{rhofacts2_i1}. 
By part  \ref{rhofacts2_i2},
$L_{\mf{z}}^* \pi^\sigma(W_\xi)  L_{I}  L_{\mf{z}}^*  
= \left[  \mf{z}L_\xi  L_j^*  \right]_{j = 0}^\infty$.
On the other hand, by equation \eqref{rhofacts1_e-2},
$L_{\mf{z}}^*  \pi^\sigma(W_\xi)  
 = \left[ L_{j+1}^* (C_{j+1}' \otimes I_H) 
\left( I'_{j}  \otimes \omega \xi \right) \right]_{j = 0}^\infty$ .
Therefore to obtain part \ref{rhofacts2_i1},  it suffices to show that for every $j \in \mb{N}_0$, 
\begin{equation}
\label{rhofacts1_e0}
\mf{z}L_\xi  L_j^*  =  L_{j+1}^* (C_{j+1}' \otimes I_H) 
\left( I'_{j}  \otimes \omega \xi \right).
\end{equation}
For any $k \in \mb{N}_0$, $U_{k}^\sigma L_{k} =
\left( (\mcZ{k})^* \otimes I_H \right) \mf{z}^{(k)*}$ by definition of $c_\mf{z}(k)$, so 
\begin{equation}
\label{rhofacts1_e1}
 L^*_{k} U_{k}^{\sigma *} =
 \mf{z}^{(k)} \left( (\mcZ{k}) \otimes I_H \right), \quad k \in \mb{N}_0.
\end{equation}
Since $Z^{(j+1)} = Z_{j+1}(I_1 \otimes Z^{(j)})$ and $\mf{z}^{(j+1)} = \mf{z}(I_1 \otimes \mf{z}^{(j)})$, two applications of equation \eqref{rhofacts1_e1}, first when $k = j+1$ and then when $k = j$, yield
\begin{multline}
\label{rhofacts1_e2}
 L_{j+1}^* U_{j+1}^{\sigma *}\left( Z_{j+1} \otimes I_H \right) 
 = \mf{z}^{(j+1)}  \left( (\mcZ{j+1}) \otimes I_H \right) \left( Z_{j+1} \otimes I_H \right) \\
 = \mf{z} \Big( I_1 \otimes \mf{z}^{(j)}\left( (\mcZ{j}) \otimes I_H \right)  \Big)  
 =  \mf{z} \left( I_1 \otimes  L_j^* U_j^{\sigma*}\right) 
\end{multline} 
As observed in Section 2.2, $U_{j+1}^\sigma
\left( I_{j}'  \otimes \omega \xi \right)  
= L_\xi^{(\tens{E}{j} \otimes_\sigma H)} U^\sigma_j $.  
Thus by Lem\-ma \ref{ZandC} and  equation \eqref{rhofacts1_e2},
\begin{multline*}
 L_{j+1}^* (C_{j+1}' \otimes I_H) 
\left( I'_{j}  \otimes \omega \xi \right)  
 =  L_{j+1}^* U_{j+1}^{\sigma  *} \left( Z_{j+1} \otimes I_H \right) U_{j+1}^\sigma
\left( I'_{j}  \otimes \omega \xi \right)  \\
 = \mf{z}(I_1 \otimes L_j^* U_j^{\sigma *}) L_\xi^{(\tens{E}{j} \otimes_\sigma H)} U^\sigma_j 
 = \mf{z} L_\xi L_j^*,
\end{multline*}
noting that the final equality follows readily from an elementwise computation.  
This gives equation \eqref{rhofacts1_e0}, so part \ref{rhofacts2_i1} holds for $Y = W_\xi$ when $\xi \in E$.

To show that part \ref{rhofacts2_i1} holds for arbitrary operators in $H^\infty(E,Z)$, 
suppose, inductively, that $k \in \mb{N}$ exists such that part \ref{rhofacts2_i1} holds whenever $Y = W_\xi$ for $\xi \in \tens{E}{j}$ with $0 \leq j \leq k$.
For $\xi \in \tens{E}{k}$ and $\eta \in E$, $W_{\xi \otimes \eta} = W_\xi \otimes W_\eta$, so
\begin{multline*}
L_{\mf{z}}^*  \pi^\sigma(W_{\xi \otimes \eta})  L_{I}  L_{\mf{z}}^* 
= L_{\mf{z}}^* \pi^\sigma(W_\xi) \pi^\sigma(W_\eta)  L_{I}  L_{\mf{z}}^* \\
= L_{\mf{z}}^*  \pi^\sigma(W_\xi) L_{I}  L_{\mf{z}}^*\pi^\sigma(W_\eta)  L_{I} L_{\mf{z}}^* 
= L_{\mf{z}}^*  \pi^\sigma(W_\xi) L_{I}  L_{\mf{z}}^*\pi^\sigma(W_\eta) \\
= L_{\mf{z}}^*  \pi^\sigma(W_\xi) \pi^\sigma(W_\eta)
= L_{\mf{z}}^*  \pi^\sigma(W_{\xi \otimes \eta}).
\end{multline*}
Part (1) follows from routine approximation arguments.
Towards obtaining part \ref{rhofacts2_i2} in the general case, define the  linear and ultraweakly continuous map $\tau: H^\infty(E, Z) \to \ms{B}(H)$ at $Y \in H^\infty(E, Z)$ by $\tau(Y) = L_{\mf{z}}^*  \pi^\sigma(Y)  L_{I}$.   
By part  \ref{rhofacts2_i1}, for $Y_1, Y_2 \in H^\infty(E,Z)$, 
\begin{equation*}
\tau(Y_1 Y_2) 
= L_{\mf{z}}^*  \pi^\sigma(Y_1) \pi^\sigma(Y_2)  L_{I} 
= L_{\mf{z}}^*  \pi^\sigma(Y_1) L_{I} L_{\mf{z}}^*\pi^\sigma(Y_2)  L_{I}
= \tau(Y_1) \tau(Y_2).
\end{equation*}
Thus $\tau$ is multiplicative.  
As both $\tau$ and $(\sigma \times \mf{z})$ are  linear, multiplicative, and ultraweakly continuous  and they agree on elements of the form $Y = \phiinf{a}$ when $a \in M$ and $Y = W_\xi$ when $\xi \in E$, it follows that $\tau = (\sigma \times \mf{z})$, which gives part \ref{rhofacts2_i2}.
Finally, parts \ref{rhofacts2_i1} and \ref{rhofacts2_i2} together imply part  \ref{rhofacts2_i3}.
\end{proof}

Our final technical lemma concerns a weighted creation operator $W_\xi^{Z'} = W'_\xi \in H^\infty(E^\sigma, Z')$.  
Recall that if $\mf{z} \in \mb{D}(X, \sigma)$, then $\mf{z}^*$ is an element in $E^\sigma$ and for any $\xi \in E^\sigma$,  $\langle \xi, \mf{z}^* \rangle = \xi^* \mf{z}^*$ is an element of $\sigma(M)'$.

\begin{lemma}\label{iotaWstar}
For $\mf{z} \in \mb{D}(X, \sigma)$, $\xi \in E^\sigma$, and $D \in \sigma(M)'$, 
\begin{equation*}
\left(W'_\xi \right)^* \left( D \cdot c_\mf{z} \right) = \langle D^* \cdot \xi, \mf{z}^* \rangle \cdot c_\mf{z}.
\end{equation*}
\end{lemma}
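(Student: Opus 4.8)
The statement is an identity between two elements of $\mathscr{F}(E^\sigma)$, so the natural approach is to verify it componentwise, i.e. show that the two sides agree in every tensor power $\tens{(E^\sigma)}{k}$, $k \in \mb{N}_0$. I would first recall that $c_\mf{z} = (c_\mf{z}(k))_{k=0}^\infty$ with $c_\mf{z}(k) \in \tens{(E^\sigma)}{k}$, that $D \cdot c_\mf{z}$ means left-multiplication by $D \in \sigma(M)'$ acting diagonally (via $\varphi''_\infty$ or rather the left action on each $\tens{(E^\sigma)}{k}$), and that $W'_\xi$ for $\xi \in E^\sigma$ is the weighted creation operator associated with the weight sequence $Z'$; its matrix is $1$-subdiagonal with entries involving $Z'^{(k+1,1)}$, so $(W'_\xi)^*$ is $1$-superdiagonal and lowers degree by one. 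Thus $(W'_\xi)^*(D \cdot c_\mf{z})$ has $k$-th component determined by the $(k+1)$-st component of $D \cdot c_\mf{z}$, namely $D \cdot c_\mf{z}(k+1)$, acted on by the appropriate adjointed matrix entry of $W'_\xi$.

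The key computational input is an explicit formula for $c_\mf{z}(k)$. From the definition, $c_\mf{z}(k)$ is the image under $\Lambda_k^{\sigma *}$ of $\big((\mcZ{k})^* \otimes I_H\big)\mf{z}^{(k)*} \in (\tens{E}{k})^\sigma$, and the recursion $Z^{(k)} = Z_k(I_1 \otimes Z^{(k-1)})$ together with $\mf{z}^{(k)} = \mf{z}(I_1 \otimes \mf{z}^{(k-1)})$ should let me express $c_\mf{z}(k)$ in terms of $c_\mf{z}(k-1)$ and $\mf{z}^*$. Concretely I expect a relation of the shape ``$c_\mf{z}(k)$ is built from $\mf{z}^*$ in the first slot and $c_\mf{z}(k-1)$ in the remaining slots, twisted by the appropriate $Z'$-weight,'' which is exactly the kind of identity already exploited in equations \eqref{rhofacts1_e1} and \eqref{rhofacts1_e2} in the proof of Lemma \ref{rhofacts2}. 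Indeed, the computation in \eqref{rhofacts1_e2} showing $L_{j+1}^* U_{j+1}^{\sigma *}(Z_{j+1}\otimes I_H) = \mf{z}(I_1 \otimes L_j^* U_j^{\sigma *})$ is essentially the dual statement; transporting it back through the $\Lambda^\sigma_k$ and $U^\sigma_k$ identifications and through the definition of $Z'$ (via Lemma \ref{ZandC}, which relates $Z_k$ and $C_k'$, hence $Z'^{(k)}$) should give precisely the needed recursion for $c_\mf{z}$. Then applying the superdiagonal matrix of $(W'_\xi)^*$ componentwise and using $L_\xi^* L_\eta = \langle \xi,\eta\rangle$-type identities in $E^\sigma$ (here $\langle \xi, \mf{z}^*\rangle = \xi^*\mf{z}^*$) collapses the first slot and produces $\langle D^* \cdot \xi, \mf{z}^*\rangle \cdot c_\mf{z}$, matching the right-hand side after moving $D$ past and noting $\langle D^*\cdot\xi, \mf{z}^*\rangle = \xi^* D^{**}\mf{z}^* $ hmm — more carefully, $\langle D^*\cdot \xi,\mf{z}^*\rangle = (I_E\otimes D^*\xi)^*\mf{z}^* = \xi^*(I_E\otimes D)\mf{z}^*$, and $D$ on the left of $c_\mf{z}$ gets absorbed correctly through the diagonal left action; tracking these adjoints and where $D$ sits is the bookkeeping to get right.

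\textbf{Main obstacle.} The hard part is not conceptual but organizational: correctly matching up the three layers of identifications — the Hilbert-space isomorphisms $U_k^\sigma$, the correspondence isomorphisms $\Lambda_k^\sigma$, and the relabeling of weights $Z \leftrightarrow C' \leftrightarrow Z'$ from Section \ref{dblcomm} — while keeping the left $\sigma(M)'$-action $D$ in the right position and the adjoints $(\cdot)^*$, $D^*$ versus $D$ straight. A clean way to sidestep some of this is to prove the identity by pairing both sides against an arbitrary $\eta \otimes h \in \tens{(E^\sigma)}{k} \otimes_\iota H$ and reducing everything, via $U_\infty^\sigma$, to a statement about $\mf{z}^{(k)}$, $Z^{(k)}$, and $L_\xi$ on the induced spaces over $\sigma$ — a setting where the needed relations are already established in Lemma \ref{rhofacts2} and its proof. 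I would budget most of the writing for carefully stating the recursion for $c_\mf{z}(k)$ as a preliminary claim, proving it once by induction using \eqref{rhofacts1_e1}, and then deriving the lemma from it in a few lines.
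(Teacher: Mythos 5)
Your plan is correct and follows essentially the same route as the paper: the paper verifies the identity componentwise by pairing the $k$-th components against an arbitrary $\eta \in \tens{(E^\sigma)}{k}$, converts everything through $\Lambda^\sigma_{k+1}$ using the defining relation between $Z'_{k+1}$ and $C_{k+1}$ together with $\mcZ{k+1}C_{k+1} = \mcZ{k}\otimes I_1$ and $\mf{z}^{(k+1)} = \mf{z}^{(k)}(I_k\otimes\mf{z})$ --- exactly the recursion you propose to isolate. The only organizational difference is that the paper disposes of the ``$D$ bookkeeping'' you worry about in one line, by first proving the case $D = I_H$ and then reducing the general case via $(W'_\xi)^*\varphi'_\infty(D) = (W'_{D^*\cdot\xi})^*$.
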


\begin{proof}
First, let us show that the result holds when $D = I_H$.
We have that
$(W'_\xi )^* c_\mf{z}  = 
\left( T_\xi^{(k) *} (Z_{k+1}')^* \left( c_\mf{z}(k+1) \right)\right)_{k = 0}^\infty$ where $T_\xi^{(k)}$ maps $\eta \in \tens{(E^\sigma)}{k}$ to $\xi \otimes \eta \in \tens{(E^\sigma)}{k+1}$.  
Also,
 $\langle  \xi, \mf{z}^* \rangle \cdot c_\mf{z}  = 
\left( \xi^* \mf{z}^* \cdot  c_\mf{z}(k) \right)_{k = 0}^\infty$, so it suffices to show that for every $k \in \mb{N}_0$ and $\eta \in \tens{(E^\sigma)}{k}$,
\begin{equation}
\label{iotaWstar.e2}
\left\langle T_\xi^{(k) *} (Z_{k+1}')^* \left( c_\mf{z}(k+1) \right), \eta \right\rangle = 
\Big\langle\xi^* \mf{z}^* \cdot  c_\mf{z}(k), \eta \Big\rangle.
\end{equation}
If $\zeta \in \tens{(E^\sigma)}{k+1}$ and $x \in H$, then by the definition of $Z'$ in terms of $C$,
\begin{multline*}
\Lambda_{k+1}^\sigma \left( Z'_{k+1} \zeta \right) x 
= U_{k+1}^\sigma (Z'_{k+1} \zeta \otimes x) \\
= (C_{k+1} \otimes I_H) U_{k+1}^\sigma (\zeta \otimes x) 
= \left( C_{k+1} \otimes I_H \right)  \Lambda_{k+1}^\sigma (\zeta) x.
\end{multline*}
Thus $\Lambda_{k+1}^\sigma \left( Z'_{k+1} \zeta \right) = \left( C_{k+1} \otimes I_H \right)  \Lambda^\sigma_{k+1} (\zeta)$.  
In particular, when $\zeta = \xi \otimes \eta$, 
\begin{multline}
\label{iotaWstar.e4}
\left\langle T_\xi^{(k) *} (Z_{k+1}')^* \left( c_\mf{z}(k+1) \right), \eta \right\rangle 
= \left\langle   c_\mf{z}(k+1) , Z'_{k+1} (\xi \otimes \eta) \right\rangle \\
=  \mf{z}^{(k+1) }\left(\mcZ{k+1} \otimes I_H \right) \Lambda_{k+1}^\sigma  \left( Z'_{k+1} (\xi \otimes \eta) \right) \\
=  \mf{z}^{(k+1) }\left(\mcZ{k+1} C_{k+1} \otimes I_H \right) \Lambda_{k+1}^\sigma   (\xi \otimes \eta) \\
=  \mf{z}^{(k+1) }\left(\mcZ{k} \otimes I_1 \otimes I_H \right) \Lambda_{k+1}^\sigma   (\xi \otimes \eta).
\end{multline}
On the other hand, 
\begin{multline}
\label{iotaWstar.e5}
\Big\langle \xi^* \mf{z}^* \cdot  c_\mf{z}(k), \eta \Big\rangle
=  \mf{z}^{(k)}\left(\mcZ{k} \otimes I_H \right)  \Lambda_k^\sigma \left(  \mf{z} \xi  \cdot  \eta \right) \\
=  \mf{z}^{(k)}\left(\mcZ{k} \otimes I_H \right)  \left( I_{k} \otimes \mf{z}\xi \right) \Lambda_k^\sigma \left(   \eta \right) \\
=  \mf{z}^{(k+1)} \left(\mcZ{k} \otimes I_1 \otimes I_H \right) \Lambda_{k+1}^\sigma \left(  \xi \otimes \eta \right).
\end{multline}
From equations \eqref{iotaWstar.e4} and \eqref{iotaWstar.e5}, we obtain equation \eqref{iotaWstar.e2}.  
Thus, the conclusion of the lemma holds when $D = I_H$.
For arbitrary $D$, we observe that $(W'_\xi )^*  \varphi'_\infty (D) 
 = (W'_{D^* \cdot \xi} )^*$.  
Therefore, by our first case, 
$(W'_\xi )^* \left( D \cdot c_\mf{z} \right) 
=  (W'_{D^* \cdot \xi})^* \cdot c_\mf{z} 
= \langle D^* \cdot \xi, \mf{z}^* \rangle \cdot c_\mf{z}$,
as desired.
\end{proof}

We are ready to present our main result, a weighted Nevanlinna-Pick interpolation theorem.  
We phrase and prove a more-general, matricial version of the theorem that, aside from increased notational complexity, poses no additional difficulty; the main result, that which generalizes the classic Nevanlinna-Pick interpolation theorem, occurs when we take $s = t = 1$ in the statement below.  
While we make necessary adjustments for the weights, our proof mirrors that of the unweighted Nevanlinna-Pick interpolation result given as Theorem 5.3 in \cite{Muhly2004a}.

\begin{theorem}[Weighted Nevanlinna-Pick Interpolation]\label{WeightedNevanlinnaPick}
Fix $s, t \in \mb{N}$.  
Let $k \in \mb{N}$.  
Choose $\{\mf{z}_i \}_{i=1}^k \subseteq \mb{D}(X, \sigma)$ and two collections, $\{ B_i \}_{i=1}^k \subseteq M_s \left(\ms{B}\left( H \right) \right)$ and $\{ F_i \}_{i=1}^k \subseteq M_{s \times t} \left(\ms{B}\left( H \right) \right)$. 
Define $\ms{A}: M_k(\sigma(M)') \to M_k \left(M_s \left(\ms{B}\left( H \right) \right)\right)$ at $[A_{ij}]_{i,j = 1}^k \in  M_k(\sigma(M)')$ by
\begin{multline*} 
\ms{A}\left( [A_{ij}]_{i,j = 1}^k  \right) = \\
 \left[B_i  \cdot \ms{K}^{(s)}(\mf{z}_i,\mf{z}_j)(A_{ij})  \cdot B_j^* - F_i \cdot \ms{K}^{(t)}(\mf{z}_i, \mf{z}_j)(A_{ij}) \cdot F_j^*\right]_{i,j = 1}^k.
\end{multline*}
Then the following conditions are equivalent:
\begin{enumerate}[label=(\arabic*),ref=(\arabic*)]
\item  \label{WeightedNevanlinnaPick_i1} 
For every  $n \in \mb{N}$ and for any $ \left\{ h_{pi} \mid 1 \leq i \leq k, \ 1 \leq p \leq n \right\} \subseteq H^{(s)}$ and  $ \left\{A_{pi} \mid 1 \leq i \leq k, \ 1 \leq p \leq n \right\} \subseteq \sigma(M)'$,
\begin{equation*}
\left\Vert \sum_{p = 1}^n \left(\sum_{i = 1}^k A_{pi} \cdot c_{\mf{z}_i} \otimes F_i^*h_{pi} \right) \right\Vert^2 \leq \left\Vert \sum_{p = 1}^n \left( \sum_{i = 1}^k A_{pi} \cdot c_{\mf{z}_i} \otimes B_i^*h_{pi} \right) \right\Vert^2,
\end{equation*}
where on the left-hand side of the inequality, the norm occurs in the Hilbert space $\F{E^\sigma} \otimes_{\iota^{(t)}} H^{(t)}$ and on the right in $\F{E^\sigma} \otimes_{\iota^{(s)}} H^{(s)}$;
\item \label{WeightedNevanlinnaPick_i3} $\ms{A}$ is completely positive; and
\item \label{WeightedNevanlinnaPick_i4} 
there exists $Y =  \left. [ Y_{af} ]_{a = 1}^s \right. _{f = 1}^t \in M_{s \times t} \left(H^\infty(E,Z)\right)$ such that $\Vert Y \Vert \leq 1$ and
\begin{equation*}
B_i  \left. \left[ \widehat{Y_{af}}(\mf{z}_i)  \right]_{a = 1}^s \right. _{f = 1}^t = F_i
\end{equation*}
for every $i \in \mb{N}$ with $1 \leq i \leq k$.
\end{enumerate}
\end{theorem}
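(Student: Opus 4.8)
The plan is to close the loop by proving $\ref{WeightedNevanlinnaPick_i1}\Leftrightarrow\ref{WeightedNevanlinnaPick_i3}$ via a direct computation and then $\ref{WeightedNevanlinnaPick_i4}\Rightarrow\ref{WeightedNevanlinnaPick_i1}$ and $\ref{WeightedNevanlinnaPick_i1}\Rightarrow\ref{WeightedNevanlinnaPick_i4}$, the last being where the weighted commutant lifting corollary, Corollary~\ref{GenWeightedCommutantLifting}, and the matricial double commutant result, Corollary~\ref{double_comm_corollary}, do the work. \emph{For $\ref{WeightedNevanlinnaPick_i1}\Leftrightarrow\ref{WeightedNevanlinnaPick_i3}$:} using that $\F{E^\sigma}$ is a $W^*$-correspondence over $\sigma(M)'$ one has $\langle A\cdot c_\mf{w}, B\cdot c_\mf{z}\rangle = \langle c_\mf{w}, (A^*B)\cdot c_\mf{z}\rangle = \ms{K}(\mf{w},\mf{z})(A^*B)$ for $A,B\in\sigma(M)'$, so expanding each norm-squared in \ref{WeightedNevanlinnaPick_i1} through the inner product on $\F{E^\sigma}\otimes_{\iota^{(s)}}H^{(s)}$, respectively $\F{E^\sigma}\otimes_{\iota^{(t)}}H^{(t)}$, rewrites the difference of the right and left sides as $\sum_{p,q,i,j}\langle h_{pi},(B_i\ms{K}^{(s)}(\mf{z}_i,\mf{z}_j)(A_{pi}^*A_{qj})B_j^* - F_i\ms{K}^{(t)}(\mf{z}_i,\mf{z}_j)(A_{pi}^*A_{qj})F_j^*)h_{qj}\rangle$. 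Since every positive element of $M_{nk}(\sigma(M)')$ is a finite sum of matrices of the form $[A_{pi}^*A_{qj}]_{(p,i),(q,j)}$, since $\ms{A}$ is ultraweakly continuous, and since the $(i,j)$ entry of $\ms{A}([A_{ij}])$ depends only on $A_{ij}$, nonnegativity of this expression for all such data is exactly complete positivity of $\ms{A}$.

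\emph{For $\ref{WeightedNevanlinnaPick_i4}\Rightarrow\ref{WeightedNevanlinnaPick_i1}$:} taking adjoints in Lemma~\ref{rhofacts2}\ref{rhofacts2_i3} gives $\pi^\sigma(Y_{af})^*L_\mf{z} = L_\mf{z}\,\widehat{Y_{af}}(\mf{z})^*$; moreover, since $\varphi'_\infty(A) = W'_A$ generates $H^\infty(E^\sigma,Z')$, Theorem~\ref{double_comm_theorem}\ref{double_comm_theorem_i1} shows $\pi^\sigma(Y_{af})$ commutes with $\iota^{\F{E^\sigma}}(\varphi'_\infty(A))$ for every $A\in\sigma(M)'$, and as $L_{A\cdot c_\mf{z}} = \iota^{\F{E^\sigma}}(\varphi'_\infty(A))L_{c_\mf{z}}$ we deduce $\pi^\sigma(Y_{af})^*(A\cdot c_\mf{z}\otimes x) = A\cdot c_\mf{z}\otimes\widehat{Y_{af}}(\mf{z})^*x$. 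Applying $\pi^\sigma_{s\times t}(Y)^*$ entrywise to $v = \sum_p\sum_i A_{pi}\cdot c_{\mf{z}_i}\otimes B_i^*h_{pi}$ and using the interpolation identity $\widehat{Y}(\mf{z}_i)^*B_i^* = (B_i\widehat{Y}(\mf{z}_i))^* = F_i^*$ gives $\pi^\sigma_{s\times t}(Y)^*v = \sum_p\sum_i A_{pi}\cdot c_{\mf{z}_i}\otimes F_i^*h_{pi}$; since $\pi^\sigma$ is a faithful normal unital $*$-homomorphism, $\pi^\sigma_{s\times t}$ is completely isometric, so $\Vert\pi^\sigma_{s\times t}(Y)^*v\Vert\leq\Vert Y\Vert\,\Vert v\Vert\leq\Vert v\Vert$, which is precisely \ref{WeightedNevanlinnaPick_i1}.

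\emph{For $\ref{WeightedNevanlinnaPick_i1}\Rightarrow\ref{WeightedNevanlinnaPick_i4}$ (the main obstacle):} let $J_1\subseteq\F{E^\sigma}\otimes_{\iota^{(t)}}H^{(t)}$ and $J_2\subseteq\F{E^\sigma}\otimes_{\iota^{(s)}}H^{(s)}$ be the closed linear spans of $\{A\cdot c_{\mf{z}_i}\otimes F_i^*h\}$ and $\{A\cdot c_{\mf{z}_i}\otimes B_i^*h\}$ over $1\leq i\leq k$, $A\in\sigma(M)'$, $h\in H^{(s)}$. Inequality \ref{WeightedNevanlinnaPick_i1} says exactly that there is a well-defined contraction $c:J_2\to J_1$ with $c(\sum A_{pi}\cdot c_{\mf{z}_i}\otimes B_i^*h_{pi}) = \sum A_{pi}\cdot c_{\mf{z}_i}\otimes F_i^*h_{pi}$. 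By Lemma~\ref{iotaWstar} and the identity $\varphi'_\infty(a^*)(A\cdot c_\mf{z}) = (a^*A)\cdot c_\mf{z}$, the operators $(W'_\xi)^*$ and $\varphi'_\infty(a)^*$ carry $\sigma(M)'\cdot c_{\mf{z}_i}$ into itself, so checking on generators and extending by ultraweak continuity shows $J_1$ and $J_2$ are invariant under $S^*\otimes I$ for every $S\in H^\infty(E^\sigma,Z')$ and that $c$ intertwines the compressions of $(\iota^{(t)})^{\F{E^\sigma}}(H^\infty(E^\sigma,Z'))$ and $(\iota^{(s)})^{\F{E^\sigma}}(H^\infty(E^\sigma,Z'))$ to $J_1$ and $J_2$; hence $G := c^*$ satisfies the hypotheses of Corollary~\ref{GenWeightedCommutantLifting} applied with $E^\sigma$, $Z'$, $\sigma_1 = \iota^{(t)}$, $\sigma_2 = \iota^{(s)}$. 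The corollary produces $\widetilde{G}$ with $\widetilde{G}^*(J_2)\subseteq J_1$, $V_2^*\widetilde{G}V_1 = G$, $\Vert\widetilde{G}\Vert = \Vert G\Vert\leq 1$, and $\widetilde{G}$ intertwining $(\iota^{(t)})^{\F{E^\sigma}}(S)$ and $(\iota^{(s)})^{\F{E^\sigma}}(S)$ for all $S$, so by Corollary~\ref{double_comm_corollary}, $\widetilde{G} = \pi^\sigma_{s\times t}(Y)$ for some $Y\in M_{s\times t}(H^\infty(E,Z))$ with $\Vert Y\Vert = \Vert\widetilde{G}\Vert\leq 1$. Finally, $\widetilde{G}^*(J_2)\subseteq J_1$ together with $V_2^*\widetilde{G}V_1 = c^*$ forces $\widetilde{G}^*(A\cdot c_{\mf{z}_i}\otimes B_i^*h) = A\cdot c_{\mf{z}_i}\otimes F_i^*h$; comparing with the formula $\widetilde{G}^* = \pi^\sigma_{s\times t}(Y)^*$ from the previous paragraph, setting $A = I_H$, applying $L_{c_{\mf{z}_i}}^*$, and cancelling the invertible factor $\langle c_{\mf{z}_i},c_{\mf{z}_i}\rangle = \ms{K}(\mf{z}_i,\mf{z}_i)(I_H)\geq I_H$ in $\sigma(M)'$, we get $\widehat{Y}(\mf{z}_i)^*B_i^* = F_i^*$, i.e. $B_i[\widehat{Y_{af}}(\mf{z}_i)] = F_i$. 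I expect the delicate parts to be verifying that $J_1,J_2$ are co-invariant and that $c$ intertwines the correct compressions so that Corollary~\ref{GenWeightedCommutantLifting} applies, and then the clean identification of the lift via Corollary~\ref{double_comm_corollary}; the rest is routine given the lemmas already established.
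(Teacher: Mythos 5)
Your proposal is correct and follows essentially the same route as the paper: the kernel expansion $\langle A\cdot c_{\mf{w}}, B\cdot c_{\mf{z}}\rangle = \ms{K}(\mf{w},\mf{z})(A^*B)$ plus the factorization of positive matrices for \ref{WeightedNevanlinnaPick_i1}$\Leftrightarrow$\ref{WeightedNevanlinnaPick_i3}, Lemma \ref{rhofacts2} together with the (double) commutant identification for \ref{WeightedNevanlinnaPick_i4}$\Rightarrow$\ref{WeightedNevanlinnaPick_i1}, and the subspaces $J_F, J_B$ with Lemma \ref{iotaWstar}, Corollary \ref{GenWeightedCommutantLifting}, and Corollary \ref{double_comm_corollary} for \ref{WeightedNevanlinnaPick_i1}$\Rightarrow$\ref{WeightedNevanlinnaPick_i4}. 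The only cosmetic deviation is your final extraction of $B_i[\widehat{Y_{af}}(\mf{z}_i)] = F_i$ by cancelling the invertible factor $\langle c_{\mf{z}_i}, c_{\mf{z}_i}\rangle \geq I_H$, where the paper instead pairs against the vacuum via $L_i^{t*}L_I^t = I_t$; both work.
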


\begin{remark}
The map $\ms{A}$ defined in the statement of Theorem \ref{WeightedNevanlinnaPick} does not depend on $Z$, so if the theorem is satisfied for \emph{one} sequence of weights associated with $X$ it holds for \emph{every} sequence of weights associated with $X$.
\end{remark}

\begin{proof} 
First let us perform a preliminary computation.  
Let $n \in \mb{N}$ and choose two collections  $ \left\{ h_{pi} \mid 1 \leq i \leq k, \ 1 \leq p \leq n \right\} \subseteq H^{(s)}$ and  $ \left\{A_{pi} \mid 1 \leq i \leq k, \ 1 \leq p \leq n \right\} \subseteq \sigma(M)'$.
Define $\widetilde{h} : = \left( ( h_{pi} )_{i = 1}^k \right)_{p = 1}^n$ in $((H^{(s)})^{(k)})^{(n)}$.  
Since $\ms{K}(\mf{z}_i, \mf{z}_j)(A_{pi}^*A_{qj}) =  \langle A_{pi} \cdot c_{\mf{z}_i}, A_{qj} \cdot c_{\mf{z}_j}\rangle$,
\begin{align*}
& \left\langle \widetilde{h},  \left[ \left[ F_i \cdot \ms{K}^{(t)}(\mf{z}_i, \mf{z}_j)(A_{pi}^*A_{qj}) \cdot F_j^*\right]_{i,j=1}^k \right]_{p,q = 1}^n ( \widetilde{h} ) \right\rangle\\
& \quad \quad \quad = \sum_{p,q = 1}^n \left(\sum_{i,j =1}^k
\left\langle F_i^* h_{pi},  \iota^{(t)} \left( \ms{K}(\mf{z}_i, \mf{z}_j)(A_{pi}^*A_{qj})  \right)  F_j^*  h_{qj}\right\rangle \right) \\
&  \quad \quad \quad = \left\Vert \sum_{p = 1}^n \left( \sum_{i = 1}^k A_{pi} \cdot c_{\mf{z}_i} \otimes F_i^*h_{pi} \right) \right\Vert^2,
\end{align*}
where the norm occurs in the Hilbert space $\F{E^\sigma} \otimes_{\iota^{(t)} } H^{(t)}$ and the inner product is taken in $((H^{(s)})^{(k)})^{(n)}$.
Since an analogous result follows with each $F_i$ replaced with $B_i$ we compute,
\begin{align}
\notag
& \left\langle \widetilde{h}, \ms{A}_n \left( \left[ [ A^*_{pi}A_{qj} ]_{i,j = 1}^k \right]_{p,q = 1}^n \right) ( \widetilde{h} )  \right\rangle\\*
\notag
& \quad = \left\langle\widetilde{h},  \left[ \left[ B_i  \cdot \ms{K}^{(s)}(\mf{z}_i, \mf{z}_j)(A^*_{pi}A_{qj})  \cdot  B_j^* \right]_{i,j = 1}^k \right]_{p,q = 1}^n ( \widetilde{h} ) \right\rangle \\*
\notag
& \qquad \qquad - \left\langle\widetilde{h},   \left[ \left[ F_i  \cdot \ms{K}^{(t)}(\mf{z}_i, \mf{z}_j)(A^*_{pi}A_{qj})  \cdot F_j^* \right]_{i,j = 1}^k \right]_{p,q = 1}^n ( \widetilde{h} ) \right\rangle\\*
\label{WeightedNevanlinnaPick_e2} 
&  \quad = \left\Vert \sum_{p = 1}^n \left( \sum_{i = 1}^k A_{pi} \cdot c_{\mf{z}_i} \otimes B_i^*h_{pi} \right) \right\Vert^2 - \left\Vert \sum_{p = 1}^n \left( \sum_{i = 1}^k A_{pi} \cdot c_{\mf{z}_i} \otimes F_i^*h_{pi} \right) \right\Vert^2.
\end{align}

Let us show that condition  \ref{WeightedNevanlinnaPick_i3} implies condition  \ref{WeightedNevanlinnaPick_i1}.  
Suppose $\ms{A}$ is completely positive.   
Let  $A : = \left[ \delta_{p = 1} \ [ \delta_{i = 1} \  A_{qj} ]_{i,j = 1}^k \right]_{p,q = 1}^n$ where $\delta_{i = 1}$ is $1$ when $i = 1$ and is otherwise equal to $0$.  
Then $A^*A = \left[ [ A^*_{pi}A_{qj} ]_{i,j = 1}^k \right]_{p,q = 1}^n$
is positive in $M_n(M_k(\sigma(M)'))$.  
Since $\ms{A}_n$ is positive, we have that $\ms{A}_n (A^*A)$ is positive in $\ms{B} \left(((H^{(s)})^{(k)})^{(n)}\right)$.  
Taking $\widetilde{h} : = \left( ( h_{pi} )_{i = 1}^k \right)_{p = 1}^n$ in $ ((H^{(s)})^{(k)})^{(n)}$ and using equation \eqref{WeightedNevanlinnaPick_e2} we have that
\begin{align*}
0 
& \leq \left\langle \widetilde{h}, \ms{A}_n (A^*A) ( \widetilde{h} )  \right\rangle\\
& = \left\Vert \sum_{p = 1}^n \left( \sum_{i = 1}^k A_{pi} \cdot c_{\mf{z}_i} \otimes B_i^*h_{pi} \right) \right\Vert^2 - \left\Vert \sum_{p = 1}^n \left( \sum_{i = 1}^k A_{pi} \cdot c_{\mf{z}_i} \otimes F_i^*h_{pi} \right) \right\Vert^2.
\end{align*}
Thus, condition \ref{WeightedNevanlinnaPick_i3} implies condition  \ref{WeightedNevanlinnaPick_i1}.

Now let us assume condition \ref{WeightedNevanlinnaPick_i1} and show that condition \ref{WeightedNevanlinnaPick_i3} holds.  
Let $n \in \mb{N}$ and choose a collection $\left\{ A_{pi} \mid 1 \leq i \leq k,  \ 1 \leq p \leq n \right\}$ of elements in $\sigma(M)'$.  
An arbitrary element of  $\ms{B} \left( ((H^{(s)})^{(k)})^{(n)} \right)$ may be written $\widetilde{h} = \left( ( h_{pi} )_{i = 1}^k \right)_{p = 1}^n$ for some  $\left\{ h_{pi} \mid 1 \leq i \leq k,  \ 1 \leq p \leq n \right\} \subseteq  H^{(s)}$.   
Since we are assuming condition  \ref{WeightedNevanlinnaPick_i1}, we deduce from equation \eqref{WeightedNevanlinnaPick_e2} that 
\begin{multline*}
 \left\langle \widetilde{h} , \ms{A}_n \left( \left[ [ A^*_{pi}A_{qj} ]_{i,j = 1}^k \right]_{p,q = 1}^n\right) ( \widetilde{h} )  \right\rangle\\
  = \left\Vert \sum_{p = 1}^n \left( \sum_{i = 1}^k A_{pi} \cdot c_{\mf{z}_i} \otimes B_i^*h_{pi} \right) \right\Vert^2 
 - \left\Vert \sum_{p = 1}^n \left( \sum_{i = 1}^k A_{pi} \cdot c_{\mf{z}_i} \otimes F_i^*h_{pi} \right)\right\Vert^2 
\end{multline*}
is nonnegative.  
Therefore $\ms{A}_n \left( \left[ [ A^*_{pi}A_{qj} ]_{i,j = 1}^k \right]_{p,q = 1}^n \right)$ is positive.   
Since an arbitrary positive element in $M_n(M_k(\sigma(M)'))$ can be expressed as a sum of elements of the form $\left[ [ A^*_{pi}A_{qj} ]_{i,j = 1}^k \right]_{p,q = 1}^n$, it follows that 
 $\ms{A}_n$ is positive.  
Therefore, $\ms{A}$ is completely positive; thus condition \ref{WeightedNevanlinnaPick_i1} implies condition \ref{WeightedNevanlinnaPick_i3}.

Let us pause for some notational simplifications.  
If $1 \leq i \leq k$, then $L_i$, $L_i^s$, and $L_i^t$ denote the insertion operators $L_{c_{\mf{z}_i}}^H$, $L_{c_{\mf{z}_i}}^{H^{(s)}}$, and $L_{c_{\mf{z}_i}}^{H^{(t)}}$, respectively.  
Observe that $L_i$ is $L_{\mf{z}_i}$ in  the notation of Lemma \ref{rhofacts2}.
Also, $L_I$, $L_I^s$, and $L_I^t$ denote the insertion operators $L_{\widehat{I_H}}^H$, $L_{\widehat{I_H}}^{H^{(s)}}$, and $L_{\widehat{I_H}}^{H^{(t)}}$, respectively.
Since the identity operators on $\tens{E}{s}$ and $\tens{E}{t}$ do not occur in our computations for the remainder of the proof, we temporarily write $I_s$ in place of  $I_{H^{(s)}}$ and $I_t$ in place of  $I_{H^{(t)}}$.  
Note that when $A \in \sigma(M)'$, $1 \leq i \leq k$, and $h \in H^{(s)}$,
\begin{equation} 
\label{WeightedNevanlinnaPick_e1} 
A \cdot c_{\mf{z}_i} \otimes h 
= \left(\varphi'_\infty(A) \otimes I_{s} \right) (c_{\mf{z}_i} \otimes h )
= (\iota^{(s)})^{\ms{F}(E^\sigma)} (\varphi'_\infty(A)) L_i^s h.
\end{equation}
A similar result holds with $t$ in place of $s$.  

Suppose $Y = [ Y_{af} ]$ satisfies the properties in condition \ref{WeightedNevanlinnaPick_i4}, using  $a \in \{1, \ldots, s\}$ to indicate indicate rows and $f \in \{1, \ldots, t\}$ to indicate columns for an $s \times t$ matrix.
We show condition \ref{WeightedNevanlinnaPick_i1}.
Let $i \in \{1, \ldots, k\}$.  
Since $L_i^s$ and $L_i^t$ are diagonal operators with $L_i$ in each diagonal entry, 
by Lemma  \ref{rhofacts2}\ref{rhofacts2_i3}, 
\begin{multline*}
F_i L_i^{t*}   
= B_i  \left[ \widehat{Y_{af}}(\mf{z}_i)  \right] L_i^{t*}
= B_i  \left[   \widehat{Y_{af}}(\mf{z}_i) L_i^*  \right] \\
= B_i   \left[ L_i^* \pi^\sigma(Y_{af}) \right]
= B_i  L_i^{s*}  \pi^\sigma_{s \times t} \left( Y\right).
\end{multline*}
Taking adjoints, we have $L_i^tF_i^* = \left( \pi_{s \times t}^\sigma (Y) \right)^* L_i^s B_i^*$. 
Thus by equation \eqref{WeightedNevanlinnaPick_e1} and Corollary \ref{double_comm_corollary}, for $1 \leq p \leq n$,
\begin{align*}
A_{pi} \cdot c_{\mf{z}_i} \otimes F_i^*h_{pi} 
& = (\iota^{(t)})^{\ms{F}(E^\sigma)} (\varphi'_{\infty}(A_{pi})) L_i^t F_i^*h_{pi} \\
& =(\iota^{(t)})^{\ms{F}(E^\sigma)} (\varphi'_\infty (A_{pi})) \left(\pi^\sigma_{s \times t} \left( Y\right) \right)^*  L_i^s B_i^* h_{pi} \\
& = \left(\pi^\sigma_{s \times t} \left( Y\right) \right)^*   (\iota^{(s)} )^{\ms{F}(E^\sigma)} (\varphi'_\infty(A_{pi})) L_i^s B_i^* h_{pi} \\
& =  \left(\pi^\sigma_{s \times t} \left( Y\right) \right)^*   \left( A_{pi} \cdot c_{\mf{z}_i} \otimes B_i^*h_{pi} \right).
\end{align*}
Since $\Vert Y \Vert \leq 1$ and  $\pi^\sigma_{s \times t}$ is a linear isometry,
\begin{multline*}
\left\Vert \sum_{p = 1}^n \left( \sum_{i = 1}^k A_{pi} \cdot c_{\mf{z}_i} \otimes F_i^*h_{pi} \right) \right\Vert^2 \\
 \leq \left\Vert \pi^\sigma_{s \times t} \left( Y\right) \right\Vert^2 \left\Vert  \sum_{p = 1}^n \left( \sum_{i = 1}^k A_{pi} \cdot c_{\mf{z}_i} \otimes B_i^*h_{pi} \right) \right\Vert^2\\
  \leq \left\Vert \sum_{p = 1}^n \left( \sum_{i = 1}^k A_{pi} \cdot c_{\mf{z}_i} \otimes B_i^*h_{pi} \right) \right\Vert^2.
\end{multline*}
Therefore condition \ref{WeightedNevanlinnaPick_i4} implies condition  \ref{WeightedNevanlinnaPick_i1}.

Finally, we show that condition  \ref{WeightedNevanlinnaPick_i1} implies condition  \ref{WeightedNevanlinnaPick_i4}. 
Let $ J_B$ be the norm-closure of the subspace of $\F{E^\sigma} \otimes_{\iota^{(s)}} H^{(s)}$ comprised of the elements of the form
\begin{equation*}
\sum_{p = 1}^n \left( \sum_{i = 1}^k A_{pi} \cdot c_{\mf{z}_i} \otimes B_i^*h_{pi}  \right)
\end{equation*}
for some $n \in \mb{N}$ and some collections $\left\{ h_{pi} \mid 1 \leq i \leq k,  \ 1 \leq p \leq n \right\} \subseteq  H^{(s)}$ and $\left\{ A_{pi} \mid 1 \leq i \leq k,  \ 1 \leq p \leq n \right\} \subseteq \sigma(M)'$.  
Similarly, define $J_F$ to be the norm-closure of the subspace of $\F{E^\sigma} \otimes_{\iota^{(t)}} H^{(t)}$ consisting of the elements of the form $\sum_{p = 1}^n \left( \sum_{i = 1}^k A_{pi} \cdot c_{\mf{z}_i} \otimes F_i^*h_{pi}  \right)$.
As we are assuming condition  \ref{WeightedNevanlinnaPick_i1}, it follows that there is a well-defined, contractive linear map $R: J_B \to J_F$ such that for any $1 \leq i \leq k$, $A \in \sigma(M)'$, and $h \in H^{(s)}$,
\begin{equation*}
R \left( A \cdot c_{\mf{z}_i} \otimes B_i^*h\right) = A \cdot c_{\mf{z}_i} \otimes F_i^*h.
\end{equation*}
We aim to apply Corollary \ref{GenWeightedCommutantLifting}, corollary to the weighted commutant lifting theorem, with $\sigma(M)'$, $E^\sigma$, $X'$, and $Z'$ replacing $M$, $E$, $X$, and $Z$.  
In the statement of that corollary, we take $\sigma_1 = \iota^{(t)}$ on $H^{(t)}$, $\sigma_2 = \iota^{(s)}$ on $H^{(s)}$, $J_1 =   
J_F$ in $\F{E^\sigma} \otimes_{\iota^{(t)}} H^{(t)}$, and $J_2 = J_B$ in $\F{E^\sigma} \otimes_{\iota^{(s)}} H^{(s)}$.  
Let $V_F$ and $V_B$ denote the inclusion maps of $J_F$ into $\F{E^\sigma} \otimes_{\iota^{(t)}} H^{(t)}$ and $J_B$ into $\F{E^\sigma} \otimes_{\iota^{(s)}} H^{(s)}$, respectively.
Let $G = R^*$ in $\ms{B}(J_F, J_B)$.
Let us show that for any $S \in H^\infty(E^\sigma, Z')$,
\begin{enumerate}[label=(\roman*),ref=(\roman*)]
\item \label{applyCor_i1} $\left( S^* \otimes I_t \right)(J_F) \subseteq J_F$,
\item \label{applyCor_i2} $\left(S^* \otimes I_s\right)(J_B) \subseteq J_B$, and
\item \label{applyCor_i3} $G \left(V_F^* \left(S \otimes I_t\right) V_F\right) = \left(V_B^* \left(S \otimes I_s\right) V_B\right) G$.
\end{enumerate}
It suffices to show that each of these properties holds when $S = \varphi'_\infty(A)$ for $A \in \sigma(M)'$ and when $S = W'_\xi $ for $\xi \in E^\sigma$.
When $D \in \sigma(M)'$, $1 \leq i \leq k$, and $h \in H^{(s)}$, we have
$\left(\varphi'_\infty(A)^* \otimes I_t \right)\left(D \cdot c_{\mf{z}_i} \otimes F_i^* h \right)  = A^*D \cdot c_{\mf{z}_i} \otimes F_i^* h$.
Moreover, by Lemma \ref{iotaWstar},
$\left( (W'_\xi )^* \otimes I_t \right)\left(D \cdot c_{\mf{z}_i} \otimes F_i^* h \right)   = \langle D^* \cdot \xi, \mf{z}_i^* \rangle \cdot c_{\mf{z}_i} \otimes F_i^* h$.
Property \ref{applyCor_i1} now follows from the definition of $J_F$;  property \ref{applyCor_i2} holds by similar reasoning.
To obtain property \ref{applyCor_i3}, we observe that if $D \in \sigma(M)'$, $1 \leq i \leq k$, and $h \in H^{(s)}$, 
\begin{multline*}
( V_B^* ( \varphi'_\infty(A) \otimes I_s) V_B  G )^* ( D \cdot c_{\mf{z}_i} \otimes B_i^* h) \\
= R V_B^* (\varphi'_\infty(A^*) \otimes I_s) V_B ( D \cdot c_{\mf{z}_i} \otimes B_i^* h) \\
=  (A^*D) \cdot c_{\mf{z}_i} \otimes F_i^* h
= V_F^* (\varphi'_\infty(A^*) \otimes I_t) V_FR \Big( D \cdot c_{\mf{z}_i} \otimes B_i^* h\Big) \\
= (G  V_F^* (\varphi'_\infty(A) \otimes I_t) V_F)^* ( D \cdot c_{\mf{z}_i} \otimes B_i^* h )
\end{multline*}
Thus $(G  V_F^* (\varphi'_\infty(A) \otimes I_t) V_F)^* = ( V_B^* (\varphi'_\infty(A) \otimes I_s) V_B G )^*$, and by taking adjoints, we obtain property \ref{applyCor_i3} for $S = \varphi'_\infty(A)$.  
Finally, by Lemma \ref{iotaWstar}, 
\begin{multline*}
 \left( V_B^* \left(W'_\xi  \otimes I_s \right) V_B G \right)^* \left( D \cdot c_{\mf{z}_i} \otimes B_i^* h \right) \\
 = RV_B^* \left((W'_\xi)^*  \otimes I_s \right) V_B \left( D \cdot c_{\mf{z}_i} \otimes B_i^* h \right) 
 = \langle D^* \cdot \xi, \mf{z}_i^* \rangle \cdot c_{\mf{z}_i} \otimes F_i^* h \\
 = V_F^* \left((W'_\xi)^* \otimes I_t \right) V_F R \left( D \cdot c_{\mf{z}_i} \otimes B_i^* h \right) \\
= \left( G  V_F^* \left(W'_\xi \otimes I_t \right) V_F \right)^* \left( D \cdot c_{\mf{z}_i} \otimes B_i^* h \right)
\end{multline*}
Therefore $\left(G V_F^* \left(W'_\xi  \otimes I_t \right) V_F  \right)^* = \left(  V_B^* \left(W'_\xi  \otimes I_s \right) V_B  G \right)^*$, and by taking adjoints, we obtain property \ref{applyCor_i3} for $S = W'_\xi$, as desired.

Having satisfied the hypothesis of Corollary \ref{GenWeightedCommutantLifting}, we conclude that there exists an operator $\widetilde{G} \in \ms{B}\left(\F{E^\sigma} \otimes_{\iota^{(t)}} H^{(t)}, \F{E^\sigma} \otimes_{\iota^{(s)}} H^{(s)} \right)$ such that
\begin{enumerate}[label=(\Roman*),ref=(\Roman*)]
\item \label{applyCor2_i1} $\widetilde{G}^* (J_B) \subseteq J_F$,
\item \label{applyCor2_i2} $V_B^*  \widetilde{G}   V_F = R^*$,
\item \label{applyCor2_i3} $ \widetilde{G}   \left(S \otimes I_t \right) =  \left( S \otimes I_s \right)  \widetilde{G}$ for all $S \in H^\infty(E^\sigma, Z')$, and
\item \label{applyCor2_i4} $\Vert \widetilde{G} \Vert = \Vert R \Vert$.
\end{enumerate}
Property \ref{applyCor2_i3} and Corollary \ref{double_comm_corollary} together imply the existence of some $Y =  [ Y_{af} ]$ in $M_{s \times t}\left(H^\infty \left(E, Z \right) \right) $ such that $\widetilde{G}  = \pi^\sigma_{s \times t} (Y)$.  
Since $\pi^\sigma_{s \times t}$ is an isometry, property \ref{applyCor2_i4} implies that 
$\left\Vert Y  \right\Vert = \Vert \pi^\sigma_{s \times t}(Y) \Vert = \Vert \widetilde{G} \Vert  = \Vert R \Vert \leq 1$.
To complete the proof, let $P_F \in \ms{B}(\F{E^\sigma} \otimes_{\iota^{(t)}} H^{(t)})$ denote the projection map onto $J_F$, and let $P_B \in \ms{B}(\F{E^\sigma} \otimes_{\iota^{(s)}} H^{(s)})$ denote the projection map onto $J_B$.  
Then $V_F V_F^* = P_F$ and $V_B V_B^* = P_B$.  
For $h \in H^{(s)}$, 
$L_i^sB_i^* h = c_{\mf{z}_i} \otimes B_i^* h $
and 
$L_i^tF_i^* h = c_{\mf{z}_i} \otimes F_i^* h $. 
It follows that 
$L_i^sB_i^* = P_B L_i^sB_i^*$ and 
$V_F R V_B^*L_i^sB_i^* = L_i^tF_i^*$.  
Thus by properties  \ref{applyCor2_i1} and \ref{applyCor2_i2},
$\widetilde{G}^* L_i^sB_i^*
= \widetilde{G}^* P_B L_i^sB_i^*
= (V_FV_F^*)\widetilde{G}^* (V_BV_B^*) L_i^sB_i^*
= V_F R V_B^*L_i^sB_i^*
=  L_i^tF_i^*$.
Taking adjoints, we obtain
\begin{equation}
\label{WeightedNevanlinnaPick_e3} 
B_i L_i^{s*} \widetilde{G}= F_i L_i^{t*}.
\end{equation}
By Lemma \ref{rhofacts2}\ref{rhofacts2_i2},
\begin{equation}
\label{WeightedNevanlinnaPick_e9} 
 \left[ \widehat{Y_{af}}(\mf{z}_i)  \right]
= \left[ L_i^* \pi^\sigma \left(Y_{af} \right) L_{I} \right]
= L_i^{s*}   \widetilde{G}   L_I^t.
\end{equation}
Since $c_{\mf{z}_i}(0) = I_H$, we have $L_i^{t*} L_I^t = I_t$.
Thus, by equations \eqref{WeightedNevanlinnaPick_e9} and  \eqref{WeightedNevanlinnaPick_e3},
\begin{equation*}
B_i  \left[ \widehat{Y_{af}}(\mf{z}_i)  \right]
= B_i L_i^{s*}   \widetilde{G}   L_I^t 
= F_i  L_i^{t*}  L_I^t 
= F_i.
\end{equation*}
Thus condition \ref{WeightedNevanlinnaPick_i1} implies condition \ref{WeightedNevanlinnaPick_i4}, which completes the proof.
\end{proof}

\begin{remark}
In  the scalar case, when $M = E = H = \mb{C}$, our kernel $\ms{K}$ is simply the reproducing kernel to a weighted Hardy space.  
Because one of the hypothesis of the admissible sequence $X$ is that each $X_k$ is positive, it follows from Theorem 7.33 of \cite{Agler2002} that the weighted Hardy spaces under consideration are those that satisfy the \emph{complete Pick Property} that is described in Definition 5.13 in \cite{Agler2002}.  
Thus, for example, our theorem applies to the Hardy and Dirichlet spaces, but not the Bergman space, by Corollary 7.37, Corollary 7.41, and Example 5.17 of \cite{Agler2002}.
The original complete Pick property involves certain matrix-valued multipliers, and while we will not include the discussion in the present paper, notions such as reproducing kernel Hilbert spaces and their spaces of multipliers can be generalized to the $W^*$-setting, which produces interesting examples of noncommutative functions.
A promising topic of future work involves the formulation of a $W^*$-version of the complete Pick property and an investigation of its implications for the representation theory of $H(E,Z)$, as was begun in \cite{Muhly2016}.

\end{remark}

\section*{Acknowledgements}
Thank you to Paul Muhly and Baruch Solel for their support, encouragement, and helpful feedback in this endeavor.

\bibliographystyle{plain}
\bibliography{C:/Users/Jenni/Documents/MuhlyMaster/MuhlyMaster}

\def\cprime{$'$}
\begin{thebibliography}{10}

\bibitem{Agler2002}
Jim Agler and John~E. McCarthy.
\newblock {\em Pick interpolation and {H}ilbert function spaces}, volume~44 of
  {\em Graduate Studies in Mathematics}.
\newblock American Mathematical Society, Providence, RI, 2002.

\bibitem{Arias2004}
Alvaro Arias.
\newblock Projective modules on {F}ock spaces.
\newblock {\em J. Operator Theory}, 52(1):139--172, 2004.

\bibitem{Baillet1988}
Michel Baillet, Yves Denizeau, and Jean-Fran{\c{c}}ois Havet.
\newblock Indice d'une esp\'erance conditionnelle.
\newblock {\em Compositio Math.}, 66(2):199--236, 1988.

\bibitem{Barreto2004}
Stephen~D. Barreto, B.~V.~Rajarama Bhat, Volkmar Liebscher, and Michael Skeide.
\newblock Type {I} product systems of {H}ilbert modules.
\newblock {\em J. Funct. Anal.}, 212(1):121--181, 2004.

\bibitem{Lan95}
E.~C. Lance.
\newblock {\em {Hilbert $C^*$-modules}}, volume 210 of {\em {London Math. Soc.
  Lecture Note Ser.}}
\newblock Cambridge University Press, 1995.

\bibitem{Meyer2010}
Jonas~R. Meyer.
\newblock {\em Noncommutative Hardy Algebras, Multipliers, and Quotients}.
\newblock PhD thesis, The University of Iowa, 2010.

\bibitem{Muhly1998a}
Paul~S. Muhly and Baruch Solel.
\newblock {Tensor algebras over {$C^*$}-correspondences: representations,
  dilations, and {$C^*$}-envelopes}.
\newblock {\em J. Funct. Anal.}, 158(2):389--457, 1998.

\bibitem{Muhly2004a}
Paul~S. Muhly and Baruch Solel.
\newblock {Hardy algebras, {$W^\ast$}-correspondences and interpolation
  theory}.
\newblock {\em Math. Ann.}, 330(2):353--415, 2004.

\bibitem{Muhly2016}
Paul~S. Muhly and Baruch Solel.
\newblock Matricial function theory and weighted shifts.
\newblock {\em Integral Equations Operator Theory}, 84(4):501--553, 2016.

\bibitem{Nevanlinna1919}
R.~Nevanlinna.
\newblock \"{U}ber beschr\"{a}nkte funktionen, die in gegebenen punkten
  vorgeschrieben werte annehmen.
\newblock {\em Ann. Acad. Sci. Fenn. Ser. A}, 13(1), 1919.

\bibitem{Parrott1978}
Stephen Parrott.
\newblock On a quotient norm and the {S}z.-{N}agy\thinspace -\thinspace
  {F}oia\c s lifting theorem.
\newblock {\em J. Funct. Anal.}, 30(3):311--328, 1978.

\bibitem{Paschke1973}
William~L. Paschke.
\newblock Inner product modules over {$B^{\ast} $}-algebras.
\newblock {\em Trans. Amer. Math. Soc.}, 182:443--468, 1973.

\bibitem{Pick1916}
Georg Pick.
\newblock {\"{U}ber die Beschr\"{a}nkungen analytischer Funktionen, welche
  durch vorgegebene Funktionswerte bewirkt werden}.
\newblock {\em Mathematische Annalen}, 77(1):7--23, 1915.

\bibitem{Popescu2010}
Gelu Popescu.
\newblock {Operator theory on noncommutative domains}.
\newblock {\em Mem. Amer. Math. Soc.}, 205(964):vi+124, 2010.

\bibitem{R1974b}
Marc~A. Rieffel.
\newblock {Induced representations of {$C^{\ast} $}-algebras}.
\newblock {\em Advances in Math.}, 13:176--257, 1974.

\bibitem{Sarason1967}
Donald Sarason.
\newblock Generalized interpolation in {$H^{\infty }$}.
\newblock {\em Trans. Amer. Math. Soc.}, 127:179--203, 1967.

\bibitem{Shields_1974}
Allen~L. Shields.
\newblock Weighted shift operators and analytic function theory.
\newblock In {\em Topics in operator theory}, pages 49--128. Math. Surveys, No.
  13. Amer. Math. Soc., Providence, R.I., 1974.

\bibitem{Sz.-Nagy1968}
B{\'e}la Sz.-Nagy and Ciprian Foia{\c{s}}.
\newblock Dilatation des commutants d'op\'erateurs.
\newblock {\em C. R. Acad. Sci. Paris S\'er. A-B}, 266:A493--A495, 1968.

\end{thebibliography}

\end{document}